\documentclass{amsart}

\usepackage[main=french, english]{babel}
\usepackage[utf8]{inputenc}
\usepackage[T1]{fontenc}
\usepackage{lmodern}

\usepackage{amsmath}
\usepackage{amsthm}
\usepackage{amssymb}
\usepackage{tikz}

\usepackage{imakeidx}
\usepackage{appendix}
\usepackage{tikz-cd}
\usepackage{verbatim}
\usepackage{enumitem}
\usepackage{multicol}
\usepackage{csquotes}

\usepackage[backref=page]{hyperref}
\usepackage{cleveref}

\hypersetup{colorlinks=true,linkcolor=blue,anchorcolor=blue,citecolor=blue}

\usepackage{adjustbox}
\usepackage{mathtools}

\newtheorem{thm}{Th\'eor\`eme}[section]
\newtheorem{prop}[thm]{Proposition}
\newtheorem{lemma}[thm]{Lemme}
\newtheorem{cor}[thm]{Corollaire}

\theoremstyle{definition}

\newtheorem{situation}[thm]{Situation}

\theoremstyle{remark}
\newtheorem{rmk}[thm]{Remarque}
\numberwithin{equation}{section}

\newcommand{\Q}{\mathbb Q}
\newcommand{\C}{\mathbb C}
\newcommand{\Z}{\mathbb Z}
\newcommand{\R}{\mathbb R}
\newcommand{\A}{\mathbb A}
\renewcommand{\P}{\mathbb P}

\newcommand{\Spec}{\operatorname{Spec}}
\newcommand{\Br}{\operatorname{Br}}
\newcommand{\Ker}{\operatorname{Ker}}

\newcommand{\mc}[1]{\mathcal{#1}}
\newcommand{\cl}{\overline}

\renewcommand{\phi}{\varphi}

\newcommand{\on}[1]{\operatorname{#1}}

\newcommand{\puiseux}{\mathbb{R}\{\!\{t\}\!\}}

\title[Vari\'et\'es r\'eelles connexes non stablement rationnelles]{Vari\'et\'es r\'eelles semi-alg\'ebriquement connexes non stablement rationnelles}

\makeatletter
\@namedef{subjclassname@2020}{\textup{2020} Mathematics Subject Classification}
\makeatother

\subjclass[2020]{14E08; 14M20, 14P10, 14P25, 14F20}

\keywords{Rationality, real connectedness, semi-algebraic geometry, specialisation, unramified cohomology, quadratic forms, Chow groups.}

\makeatletter

\author{Jean-Louis Colliot-Th\'el\`ene}
\author{Alena Pirutka}
\author{Federico Scavia}

\def\@setauthors{%
  \begingroup
  \def\thanks{\protect\thanks@warning}%
  \setbox\@tempboxa=\vbox{%
    \centering
    \lineskip=1.5ex
    \normalfont\scshape Jean-Louis Colliot-Th\'el\`ene, Alena Pirutka et Federico Scavia\par
  }%
  \vspace{3.5ex}
  \noindent\box\@tempboxa
  \endgroup
}

\def\@centered@footer{\reset@font\normalfont\hfil\thepage\hfil}

\def\ps@headings{%
    \let\@mkboth\@gobbletwo 
    \def\@evenhead{\normalfont\scshape \hfill Jean-Louis Colliot-Th\'el\`ene, Alena Pirutka et Federico Scavia\hfill}%
    \def\@oddhead{\normalfont\scshape \hfill Vari\'et\'es r\'eelles connexes non stablement rationnelles\hfill}%
    \def\@oddfoot{\@centered@footer}%
    \let\@evenfoot\@oddfoot
}

\def\ps@plain{%
    \let\@mkboth\@gobbletwo
    \let\@oddhead\@empty
    \let\@evenhead\@empty
    \def\@oddfoot{\@centered@footer}%
    \let\@evenfoot\@oddfoot
}

\makeatother

\date{\today} 

\newcommand{\headerauthors}{Jean-Louis Colliot-Th\'el\`ene, Alena Pirutka et Federico Scavia}
\newcommand{\headershorttitle}{Vari\'et\'es r\'eelles connexes non stablement rationnelles}

\begin{document}

\selectlanguage{english} 
\begin{abstract}
  Let $R$ be the field of real Puiseux series. It is a real closed field. We construct the first examples of smooth intersections of two quadrics in $\P_R^5$ and smooth cubic hypersurfaces in $\P_R^4$ which are not stably rational but for which the space $X(R)$ of $R$-points is semi-algebraically connected. The question of constructing such examples over the field of real numbers $\R$ remains open.

\vspace{0.66cm} 
  \selectlanguage{french} 
  \item[\hskip\labelsep\scshape\abstractname.] 
  Soit $R$ le corps des s\'eries de Puiseux r\'eelles. C'est un corps r\'eel clos. On construit les premiers exemples d'intersections lisses de deux quadriques dans $\P_R^5$ et d'hypersurfaces cubiques lisses dans $\P_R^4$ qui ne sont pas stablement rationnelles mais pour lesquelles l'espace $X(R)$ des $R$-points est semi-alg\'ebriquement connexe. La question de construire de tels exemples sur le corps des r\'eels $\R$ reste ouverte.
\end{abstract}
\selectlanguage{french}
\maketitle

\markboth{\headerauthors}{\headershorttitle}

\pagestyle{headings}

\section*{Introduction}

Soit $X$ une $\R$-vari\'et\'e projective et lisse, g\'eom\'etri\-quement connexe,
g\'eom\'etri\-quement unirationnelle.
Pour $X$ de dimension $2$, on sait depuis Comessatti \cite{comessatti1912fondamenti} que si  l'espace topologique $X(\R)$ est connexe (et en particulier non vide) alors $X$ est rationnelle sur $\R$.   Ceci s'applique aux surfaces cubiques lisses dans $\P^3_{\R} $ et aux intersections lisses
de deux quadriques dans $\P^4_{\R}$.  
En dimension quelconque, que $X(\R)$  soit connexe  
est une condition n\'ecessaire pour que $X$ soit stablement rationnelle sur $\R$.
Ce n'est pas une condition suffisante de rationalit\'e, d\'ej\`a pour $X$
intersection lisse de deux quadriques dans  $\P^5_\R$ 
\cite{hassett2021rationality}. 
Mais la question  de savoir si cette condition est suffisante pour assurer la rationalit\'e stable est ouverte pour
certaines classes simples de vari\'et\'es, par exemple les intersections lisses de deux quadriques dans $\P^5_\R$, ou les hypersurfaces cubiques lisses de dimension au moins $3$.

On peut se poser la m\^eme question sur un corps r\'eel clos $R$ arbitraire. On dispose en effet d'une notion de connexit\'e semi-alg\'ebrique qui \'etend la notion usuelle de connexit\'e sur les r\'eels; voir \cite{delfs1981semialgebraicI} et \cite[Definition 2.4.2]{bochnak1998real}. Sur le corps \[\puiseux\coloneqq \cup_{n\geq1} \R(\!(t^{1/n})\!)\]
des s\'eries de Puiseux r\'eelles (qui est un corps r\'eel clos avec $t$ infiniment petit positif) nous donnons des exemples
de vari\'et\'es projectives lisses  $X$ dont le lieu $X(\puiseux)$ des points r\'eels est
semi-alg\'ebriquement connexe mais qui ne sont pas
$CH_{0}$-triviales, et en particulier ne sont pas stablement rationnelles,
ni m\^eme r\'etractilement rationnelles, parmi les vari\'et\'es des types suivants :

\begin{enumerate}
    \item[(i)] intersection lisse de deux quadriques dans $\P^5_{\puiseux}$ (th\'eor\`eme \ref{prop-intersection-quadriques}),
    \item[(ii)] hypersurface cubique lisse dans $\P^4_{\puiseux}$ (th\'eor\`eme  \ref{prop-cubique}),
    \item[(iii)] solide lisse fibr\'e en coniques sur $ \P^2_{\puiseux}$ et g\'eom\'etriquement rationnel (th\'eo\-r\`e\-me  \ref{prop-fibres-coniques}).  
\end{enumerate} 
On ne sait pas s'il existe de tels exemples sur le corps $\R$ des r\'eels.

Ces r\'esultats sont \`a mettre en perspective avec plusieurs articles r\'ecents.
 
\begin{enumerate}
    \item Sur tout corps $k$, des travaux de Hassett et Tschinkel  \cite{hassett2021rationality} (pour $k=\R$) et de Benoist et Wittenberg \cite{benoist2019intermediate} (pour $k$ arbitraire) ont \'etabli
la non-rationalit\'e de toute intersection lisse  $X$ de deux quadriques 
dans $\P^5_{k}$, d\`es que $X$ ne contient
pas de droite d\'efinie sur $k$, ce qui  sur $k=\R$ peut se produire avec $X(\R)$  connexe.
La m\'ethode est une \'elaboration sur un corps arbitraire de celle de Clemens et
Griffiths pour \'etablir la non-rationalit\'e des hypersurfaces cubiques lisses dans $\P^4_{\C}$.
Olivier Wittenberg a r\'ecemment appliqu\'e cette m\'ethode pour \'etablir   sous des hypoth\`eses g\'en\'erales la non-rationalit\'e 
de solides $X$ fibr\'es en surfaces quadriques sur la droite projective $\P^1_{\R}$, certains d'entre eux satisfaisant que $X(\R)$ est  connexe. Cette m\'ethode est puissante, mais elle est sp\'ecifique \`a la dimension $3$, et elle ne dit rien sur la  rationalit\'e stable.

\item Sur tout corps r\'eel clos $R$, deux des auteurs \cite{colliot2024certaines} ont  donn\'e des exemples de solides $X$  fibr\'es en surfaces quadriques sur $\P^1_{R}$
 qui ne sont pas $CH_{0}$-triviaux bien  que $X(R)$
soit semi-alg\'ebriquement connexe. L'invariant ici utilis\'e est le groupe de Brauer non ramifi\'e.
Les fibrations sur $\P^1_{R}$ ont des fibres g\'eom\'etriques singuli\`eres r\'eductibles.
\item Sur tout corps r\'eel clos $R$, deux des auteurs \cite{colliot2024certaines} ont \'etudi\'e une classe de
 solides $X$ fibr\'es en surfaces quadriques sur $\P^1_{R}$, \`a fibres g\'eom\'etriques int\`egres,  qui \'echappe aux deux techniques pr\'ec\'edentes. Pour une sous-classe assez large de tels solides $X$ ils ont \'etabli
 que $X$ est $CH_{0}$-triviale.  La question si sur $R=\R$ cela vaut toujours pour toutes les vari\'et\'es de la classe est ouverte.
 \item Sur le corps $\puiseux$, Benoist et le deuxi\`eme auteur \cite{benoist2024rationality} ont donn\'e
 des exemples de solides du type de \cite{colliot2024certaines} 
 dont l'espace des $\puiseux$-points est semi-alg\'ebriquement connexe et
 qui ne sont pas $CH_{0}$-triviaux. L'invariant utilis\'e
 est un invariant en cohomologie non ramifi\'ee de degr\'e $3$  introduit dans \cite{colliot2024certaines}, reposant sur le travail \cite{colliot1993groupe}.
 La non-nullit\'e de cet invariant est d\'etect\'ee par 
 une m\'ethode de sp\'ecialisation au-dessus des  corps
 $\R(\!(t^{1/n})\!)$ pour tous les $n\geq 1$.
\end{enumerate} 

Dans le pr\'esent article, nous d\'eveloppons une nouvelle m\'ethode, qui 
donne des r\'esultats pour plusieurs autres classes de vari\'et\'es. Nous partons de versions singuli\`eres $Y/\R$  des solides $X$ des types (i) (ii) (iii) pour lesquelles
$Y(\R)$ est connexe,  le lieu singulier est form\'e d'un nombre fini de points non r\'eels,
et  le groupe de Brauer non ramifi\'e de $Y$ n'est pas r\'eduit \`a celui de $\R$. De tels exemples ont \'et\'e construits dans \cite{colliot2024certaines}. 
Ensuite, nous d\'eformons une telle vari\'et\'e $Y$ en une famille $\mc{X}$ sur $\A^1_\R=\Spec (\R[t])$ \`a fibre g\'en\'erique lisse du même type. En nous appuyant sur une version semi-alg\'ebrique du th\'eor\`eme d'Ehresmann due \`a Coste et Shiota \cite{coste1992nash}, nous montrons (\S \ref{ehres})  que l'ensemble des $\puiseux$-points de la $\puiseux$-vari\'et\'e projective lisse $X\coloneqq \mc{X}\times_{\R[t]}\puiseux$  est semi-alg\'ebriquement connexe. Une m\'ethode alternative  pour obtenir ce r\'esultat (\S \ref{ehres}) repose sur le travail de Scheiderer \cite{scheiderer1994real}.
Par ailleurs nous montrons (\S \ref{parasp}) que la m\'ethode de sp\'ecialisation, initi\'ee par Voisin  et g\'en\'eralis\'ee par deux des auteurs \cite{colliot2016hypersurfaces}, s'adapte de fa\c{c}on souple \`a ce contexte 
(sans r\'esolution explicite des singularit\'es)
et \'etablit que la $\puiseux$-vari\'et\'e $X$ n'est pas
$CH_0$-triviale et donc n'est pas stablement rationnelle. La construction des exemples est d\'etaill\'ee au \S \ref{paraex}. 

\medskip

La nouvelle m\'ethode s'applique dans un cadre plus large (\S \ref{methode-2}),
o\`u le lieu singulier de la vari\'et\'e auxiliaire $Y$ n'est pas
n\'ecessairement fini, et o\`u le groupe de Brauer non ramifi\'e est remplac\'e  par un groupe de cohomologie non ramifi\'ee sup\'erieur.

Nous obtenons ainsi (\S \ref{dimsup}) des exemples de $\puiseux$-vari\'et\'es projectives lisses $X$ non $CH_0$-triviales avec $X(\puiseux)$ semi-alg\'ebriquement connexe
parmi des vari\'et\'es de dimension sup\'erieure des types suivants :
\begin{enumerate}
    \item[(iv)] fibration en quadriques lisse de dimension relative $6$ sur $\P^1_{\puiseux}$ (th\'eor\`eme  \ref{fibres-quadriques-6}),
    \item[(v)] intersection lisse de deux quadriques dans $\P^9_{\puiseux}$ (th\'eor\`eme  \ref{intersection-quadriques-p9}).
\end{enumerate}

Ici l'analyse  de la fibre singuli\`ere utilis\'ee dans la d\'eformation est plus d\'elicate, et on a besoin d'une  r\'esolution des singularit\'es explicite. 

On comparera (i) et (v) avec la rationalit\'e, sur tout corps r\'eel clos $R$, des intersections lisses  de deux quadriques dans $\P^4_R$ (Comessatti \cite{comessatti1912fondamenti}), dans $\P^6_{R}$ et dans 
un certain nombre de cas  dans $\P^{2n}_R$ (Hassett, Koll\'ar, Tschinkel \cite{hassett2022rationality}) lorsque leur lieu r\'eel est semi-alg\'ebriquement connexe.

On conclut cet article avec des calculs  (\S \ref{calcul}) des groupes de cohomologie non ramifi\'ee en bas degr\'e pour les vari\'et\'es projectives et lisses ici consid\'er\'ees. Dans beaucoup de cas, ces groupes  sont r\'eduits \`a la cohomologie du corps de base, et ce m\^{e}me
apr\`es toute extension du corps de base,
si bien qu'ils ne permettent pas de d\'etecter la non $CH_0$-trivialit\'e.

\subsection*{Notations et rappels} 
Un corps $F$ est dit {\it formellement r\'eel} s'il satisfait l'une des propri\'et\'es \'equivalentes suivantes (cf. \cite[Theorem 1.1.8]{bochnak1998real}) : 
\begin{itemize}
    \item[(i)] le corps $F$ peut \^etre ordonn\'e;
    \item[(ii)] l'\'el\'ement $-1$ n'est pas une somme de carr\'es dans $F$;
    \item[(iii)] pour tous $x_1,\ldots x_n\in F$, si $\sum x_i^2=0$, alors $x_i=0$, $i=1,\ldots n$.
\end{itemize}
 
Un corps r\'eel $R$ est dit {\it r\'eel clos} s'il n'admet pas d'extension alg\'ebrique r\'eelle non triviale. Il est \'equivalent de dire (cf. \cite[Theorem 1.2.2]{bochnak1998real}) que le corps $R[i]\coloneqq R[x]/(x^2+1)$ est alg\'ebriquement clos.
Par exemple, les corps suivants sont des corps r\'eels clos: le corps des nombres r\'eels $\mathbb R$, le corps $\mathbb R_{alg}$ des nombres r\'eels qui sont alg\'ebriques sur $\mathbb Q$, et le corps des s\'eries de Puiseux \`{a} coefficients r\'eels $\puiseux$ (voir \cite[Example 1.2.3.]{bochnak1998real}).

\medskip

Soit $k$ un corps.
Une $k$-vari\'et\'e est un $k$-sch\'ema s\'epar\'e de type fini. Une $k$-vari\'et\'e int\`egre
est dite {\it $k$-rationnelle} si elle est $k$-birationnelle \`a un espace projectif $\P^n_{k}$.
Une $k$-vari\'et\'e int\`egre $X$ est dite {\it stablement $k$-rationnelle} s'il existe des espaces projectifs
$\P^n_{k}$ et $\P^m_{k}$ tels que $X \times_{k}\P^n_{k}$ soit $k$-birationnel \`a $\P^m_{k}$.
Une $k$-vari\'et\'e int\`egre $X$ est dite {\it r\'etractilement rationnelle}
s'il existe des ouverts de Zariski non vides $U \subset X$ et $V \subset \P^m_{k}$ ($m$ convenable),
 et des $k$-morphismes $ f \colon  U \to V$ et $g \colon  V \to U$ tels que le compos\'e $g \circ f$ soit l'identit\'e de $U$.
Une $k$-vari\'et\'e int\`egre stablement $k$-rationnelle est r\'etractilement rationnelle.

\medskip

Pour $X$ une $k$-vari\'et\'e, on note $CH_{0}(X)$ le groupe de Chow des z\'ero-cycles modulo l'\'equivalence rationnelle.
Pour $X$ une $k$-vari\'et\'e propre,  l'application qui \`a un point ferm\'e $P$
associe le degr\'e $[k(P):k]$ du corps r\'esiduel $k(P)$ s'\'etend en un homomorphisme
$ \mathrm{deg}_{k} \colon  CH_{0}(X) \to \Z$. On note $A_{0}(X)$ le noyau de cet homomorphisme.

On dit qu'une $k$-vari\'et\'e propre g\'eom\'etriquement int\`egre $X$  est   (universellement) $CH_{0}$-triviale si pour tout corps $F$ contenant $k$ 
la fl\`eche $ \mathrm{deg}_{F} \colon  CH_{0}(X_{F}) \to \Z$ est un isomorphisme
(voir \cite{auel2017universal, colliot2016hypersurfaces}). 
Sous l'hypoth\`ese que la $k$-vari\'et\'e $X$, de corps des fonctions $k(X)$, est lisse et 
poss\`ede un z\'ero-cycle de degr\'e $1$, ceci est \'equivalent \`a l'\'enonc\'e : $A_{0}(X_{k(X)})=0$ \cite[Lemma 1.3]{auel2017universal}.
 
Si une $k$-vari\'et\'e g\'eom\'etriquement int\`egre propre  et lisse  est  stablement ra\-tion\-nel\-le, ou plus g\'en\'eralement r\'etractilement rationnelle, alors elle est  $CH_{0}$-triviale.

Soit $C$ un corps alg\'ebriquement clos de caract\'eristique z\'ero.
Rappelons que toute intersection compl\`ete lisse de deux quadriques dans $\P^n_C$ avec $n \geq 4$ est rationnelle,
et qu'il en est de m\^{e}me de toute vari\'et\'e projective et lisse de dimension au moins~2 fibr\'ee en quadriques sur la droite projective $\P^1_C$. Il existe des solides fibr\'es en coniques sur le plan projectif qui ne sont pas r\'etractilement rationnels (Artin--Mumford). 
Une hypersurface cubique lisse dans $\P^4_C$ n'est pas rationnelle (Clemens--Griffiths).
Soit $n\geq 4$. Certaines hypersurfaces cubiques lisses dans $\P^n_C$, comme l'hypersurface cubique de Fermat, sont $CH_0$-triviales. Certaines hypersurfaces cubiques lisses dans $\P^{2m+1}_C$ sont rationnelles. 
La rationalit\'e stable des hypersurfaces cubiques dans $\P^4_C$ est un probl\`eme enti\`erement ouvert. 

Soient $k$ un corps et un $M$ un module galoisien discret de torsion, premi\`ere \`a la caract\'eristique de $k$.
Soient
$X$ une $k$-vari\'et\'e int\`egre et $k(X)$
son corps des fonctions.  On note $$H^{i}_{\on{nr}}(k(X)/k, M) \subset H^{i}(k(X),M)$$
le sous-groupe du groupe de cohomologie galoisienne
form\'e des \'el\'ements qui sont non ramifi\'es par rapport \`a toute valuation discr\`ete sur $k(X)$ triviale sur $k$; voir \cite[\S 4]{colliot1995birational}, \cite{rost1996chow} et \cite{merkurjev2008unramified}. 
Si $\on{car}(k)=0$ et $X/k$ est projective et lisse, on a
\[\on{Br}(X)=\on{Br}_{\on{nr}}(k(X)/k)=H^2_{\on{nr}}(k(X)/k,\Q/\Z(1)),\]
o\`u $\on{Br}(X)=H^2_{\text{\'et}}(X,\mathbb{G}_m)$ est le groupe de Brauer de $X$, o\`u $\on{Br}_{\on{nr}}(k(X)/k)$ est le groupe de Brauer non ramifi\'e de $k(X)$ sur $k$ et o\`u, pour tout $j\in \Z$, on note $\Q/\Z(j)\coloneqq \varinjlim_n \mu_n^{\otimes j}$.

On utilisera dans cet article des r\'esultats de la th\'eorie alg\'ebrique des formes quadratiques \cite{lam2005introduction, kahn2008formes}. En particulier, on utilisera les propri\'et\'es des formes de Pfister (voir \cite[Chapter X]{lam2005introduction} et  \cite[Chapitre 2]{kahn2008formes}) et des formes d'Albert (voir \cite[(4.7) p. 69, Albert's Theorem 4.8]{lam2005introduction} et \cite[D\'efinition 5.7.7, Lemme 8.1.4]{kahn2008formes}).

\section{Un crit\`ere de connexit\'e semi-alg\'ebrique}\label{ehres}

Dans cette section, nous donnons un crit\`ere de connexit\'e semi-alg\'ebrique, au sens de \cite[Definition 2.4.2]{bochnak1998real}, pour la fibre g\'en\'erique, au-dessus du corps des s\'eries de Puiseux $\puiseux$, d'un morphisme projectif de $\R$-vari\'et\'es $f\colon X\to U$, o\`u $U\subset \A^1_\R=\Spec \R[t]$ est un sous-sch\'ema ouvert contenant $0$. 

\begin{thm}\label{critere-connexe}
Soit $\A^1_\R=\Spec \R[t]$ et soit $\eta\colon \Spec(\puiseux)\to \A^1_\R$ le morphisme correspondant \`a l'inclusion $\R[t]\subset \puiseux$. Soit $U\subset \A^1_\R$ un voisinage Zariski de $0$, soit $f\colon X\to U$ un morphisme projectif fid\`element plat de $\R$-vari\'et\'es, \`a fibre g\'en\'erique lisse, soit $X_0\coloneqq X|_{t=0}$ et soit $X_\eta\coloneqq X\times_{U,\eta}\Spec(\puiseux)$. Supposons que $X_0(\R)$ est non vide et contenu dans le lieu lisse de $X_0$. Le nombre de composantes semi-alg\'ebriquement connexes de $X_0(\R)$ coïncide avec le nombre de composantes semi-alg\'ebriquement connexes de $X_\eta(\puiseux)$.
\end{thm}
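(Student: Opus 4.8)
The plan is to deduce the statement from the semi-algebraic version of Ehresmann's theorem due to Coste and Shiota \cite{coste1992nash}, applied to the real locus of $f$ over a small real interval centred at $0$, and then to pass from $\R$ to $\puiseux$ via the Tarski transfer principle. First I would localise the morphism. Since $f$ is projective we may view $X$ as a closed subscheme of $\P^N_U$, hence of $\P^N_{\A^1_\R}$. As $f$ is flat and $X_0$ is smooth over $\R$ along its smooth locus $X_0^{\on{sm}}$, the fibrewise criterion for smoothness shows that $f$ is a smooth morphism at every point of $X_0^{\on{sm}}$; let $W\subset X$ be the (Zariski-open) smooth locus of $f$, so that $X_0(\R)\subset X_0^{\on{sm}}\subset W$. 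Because $f$ is projective, $X_0$ is projective over $\R$, so $X_0(\R)$ is compact for the strong topology and is disjoint from the closed subset $Z\coloneqq X\setminus W$. The induced map $X(\R)\to U(\R)$ is proper for the strong topology, so for a compact neighbourhood $K$ of $0$ in $U(\R)$ the set $Z(\R)\cap f^{-1}(K)$ is compact and its image is a compact subset of $K$ not containing $0$; hence there is a real $\epsilon>0$ with $(-\epsilon,\epsilon)\subset K$ such that, setting $I\coloneqq(-\epsilon,\epsilon)$ and $M\coloneqq f^{-1}(I)\cap X(\R)$, one has $M\subset W(\R)$.

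Next I would invoke the semi-algebraic Ehresmann theorem. The restriction $g\colon M\to I$ of $f$ is a semi-algebraic map; it is proper because $X(\R)\to U(\R)$ is, and it is a submersion of semi-algebraic manifolds because $W\to U$ is a smooth morphism. A submersion is open and a proper map is closed, so the image of $g$ is open and closed in $I$; it is nonempty since $X_0(\R)\neq\emptyset$, hence equals the semi-algebraically connected set $I$, and $g$ is surjective. By \cite{coste1992nash}, $g$ is semi-algebraically trivial: there is a semi-algebraic homeomorphism $M\simeq X_0(\R)\times I$ over $I$. Consequently, for every $s\in I$ the fibre $X_s(\R)$ has exactly $k$ semi-algebraic connected components, where $k$ denotes the number of semi-algebraic connected components of $X_0(\R)$.

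Finally I would transfer this to $R\coloneqq\puiseux$. Cutting $X$ out of $\P^N_{\A^1_\R}$ by explicit polynomials, $M$ is defined, in suitable affine charts, by a formula of the first-order language of ordered fields with parameters in $\R$; let $M_R$ be the semi-algebraic subset of the ambient $R$-space defined by the same formula. Then $M_R=f^{-1}(I_R)\cap X(R)$, and, the Puiseux variable $t$ being positive and infinitesimal so that $0<t<\epsilon$ in $R$, the $R$-point $t$ of $\A^1_\R$ lies in $I_R$ and in $U$, and the fibre $(M_R)_t$ is exactly $X_\eta(R)$. Now the subset $\set{s\in I : X_s(\R)\text{ has exactly }k\text{ semi-algebraic connected components}}$ of $I$ equals $I$ by the previous step; but the formation of this subset commutes with the real closed field extension $\R\subset R$, since for a semi-algebraic family of semi-algebraic sets the number of semi-algebraic connected components of the fibres is a definable, uniformly bounded invariant insensitive to real closed base change (Delfs--Knebusch; see \cite[Ch.~9]{bochnak1998real}). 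Hence over $R$ this subset equals $I_R$, and evaluating at $s=t$ shows that $X_\eta(R)=(M_R)_t$ has exactly $k$ semi-algebraic connected components, as claimed. (Alternatively, as indicated in the introduction, the semi-algebraic connectedness count for $X_\eta(\puiseux)$ can be obtained from Scheiderer's work \cite{scheiderer1994real} in place of the Ehresmann step.)

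The main obstacle is the reduction to a proper semi-algebraic submersion enabling the use of \cite{coste1992nash}: this is where the hypothesis $X_0(\R)\subset X_0^{\on{sm}}$ is indispensable, for a real singular point of $X_0$ can make the component count jump — already for the conic $\set{x_0^2=tx_1^2}\subset\P^1_U$, whose special fibre has one real point while its generic fibre over $\puiseux$ has two $\puiseux$-points. The transfer step is then routine; the only delicate point there is to recognise $X_\eta(\puiseux)$ as the fibre of the family at an infinitesimally small positive value of the parameter.
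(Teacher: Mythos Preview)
Your proof is correct and follows the paper's first approach via the Coste--Shiota Nash triviality theorem. The only difference is organisational: instead of your compactness argument over $\R$ followed by a transfer of the component count via definability, the paper first shrinks $U$ algebraically so that $f$ is smooth over $U\setminus\{0\}$, deduces that the smooth locus $V$ of $f$ satisfies $V(\R)=X(\R)$ and hence (by Artin--Lang) $V(R)=X(R)$ for $R=\puiseux$, and then applies Coste--Shiota directly over $R$ to the proper Nash submersion $X(R)\to U(R)$ restricted to $(-\epsilon,\epsilon)_R$, obtaining a Nash diffeomorphism $X_0(R)\simeq X_\eta(R)$. This sidesteps both your compactness step and the appeal to uniform definability of component counts, at the modest cost of invoking Artin--Lang and running the Ehresmann argument over a non-archimedean real closed field; your route has the virtue of keeping the analytic step over $\R$ where intuition is clearest.
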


On donnera deux preuves de ce r\'esultat. La premi\`ere d\'emonstration est analytique et utilise un th\'eor\`eme fondamental de Nash-trivialit\'e des fibrations d\^{u} \`a Coste et Shiota \cite{coste1992nash}. La deuxi\`eme d\'emonstration utilise la variante r\'eelle de la th\'eorie de SGA 4, d\'evelopp\'ee par Scheiderer \cite{scheiderer1994real}.

\subsection{Premi\`ere preuve du th\'eor\`eme \ref{critere-connexe}}

Nous introduisons maintenant les notations n\'ecessaires pour \'enoncer le th\'eor\`eme   de Nash-trivialit\'e des fibrations
de   Coste et Shiota \cite{coste1992nash}; nos r\'ef\'erences de base sont l'article \cite{delfs1981semialgebraicII} de Delfs et Knebusch et le livre \cite{bochnak1998real} de Bochnak, Coste et Roy.

Soit $R$ un corps r\'eel clos et soit $X$  une $R$-vari\'et\'e. D'apr\`es \cite[Example 2 p. 182]{delfs1981semialgebraicII} l'ensemble $X(R)$ admet une structure naturelle d'espace semi-alg\'ebrique sur $R$, au sens de \cite[Definition 3 p. 182]{delfs1981semialgebraicII} (il s'agit donc d'espaces annel\'es sur $R$ au sens de \cite[Definition 2 p. 182]{delfs1981semialgebraicII}). Si $X$ est quasi-projective, $X(R)$ est m\^eme un espace semi-alg\'ebrique affine sur $R$; voir \cite[Remark p. 182]{delfs1981semialgebraicII}. Pour tout morphisme de $R$-vari\'et\'es $f\colon X\to Y$, l'application induite $f(R)\colon X(R)\to Y(R)$ est semi-alg\'ebrique au sens de \cite[p. 184]{delfs1981semialgebraicII}. Si le morphisme $f$ est propre, d'apr\`es \cite[Theorem 9.6]{delfs1981semialgebraicII} l'application $f(R)$ est propre au sens de \cite[Definition p. 192]{delfs1981semialgebraicII}, et en particulier elle est ferm\'ee.

Soit $X$ une $R$-vari\'et\'e quasi-projective. Alors $X(R)$ est un espace semi-alg\'ebrique affine sur $R$. On peut donc exhiber $X(R)$ comme un sous-ensemble alg\'ebrique $X(R)\subset R^n$ au sens de \cite[Definition 2.1.1]{bochnak1998real}, pour $n\geq 1$ convenable. Supposons de plus que $X(R)$ est contenu dans le lieu lisse de $X$. Alors, d'apr\`es \cite[Proposition 3.3.11]{bochnak1998real}, $X(R)$ est donc une vari\'et\'e de Nash affine au sens de \cite[p. 351]{coste1992nash}, ou de façon \'equivalente une sous-vari\'et\'e de Nash de $R^n$ au sens de \cite[Definition 2.9.9]{bochnak1998real}. Si $Y$ est une autre $R$-vari\'et\'e quasi-projective telle que $Y(R)$ soit contenu dans le lieu lisse de $Y$ et $f\colon X\to Y$ est un $R$-morphisme tel que $X(R)$ soit contenu dans le lieu lisse de $f$, l'application semi-alg\'ebrique induite $f(R)$ est une submersion de Nash, c'est-\`a-dire une application de Nash au sens de \cite[Definition 2.9.9]{bochnak1998real} dont la diff\'erentielle est surjective en tout point de $X(R)$. En particulier, $f(R)$ est ouverte. On rappelle que, par d\'efinition, toute application de Nash $p\colon M\to N$ entre vari\'et\'es de Nash affines est en particulier une application semi-alg\'ebrique, et que, par d\'efinition, $p$ est un diff\'eomorphisme de Nash si elle est bijective et $p^{-1}$ est aussi une application de Nash.

\begin{thm}\label{coste-shiota-thom} (Coste-Shiota)
    Soit $R$ un corps r\'eel clos, soit $M$ une vari\'et\'e de Nash affine, et soit $p\colon M\to R$ une submersion propre de Nash. Alors il existe un diff\'eomorphisme de Nash $h\colon p^{-1}(0)\times R\to M$ tel que $p\circ h$ soit la projection sur $R$.
\end{thm}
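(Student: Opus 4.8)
La preuve que je propose suit le mod\`ele de la d\'emonstration classique du th\'eor\`eme d'Ehresmann, adapt\'ee \`a la cat\'egorie de Nash. Dans le cas lisse usuel sur $\R$, on munit $M$ d'une m\'etrique riemannienne, on rel\`eve le champ de vecteurs $\partial_t$ de la base en l'unique champ horizontal $v$ sur $M$ v\'erifiant $dp(v)=\partial_t$, et l'on int\`egre son flot. Comme $p$ est propre et la base $R$ est de dimension $1$, le segment $[0,s]$ a pour image r\'eciproque un compact, de sorte qu'aucune ligne de flot issue de $F\coloneqq p^{-1}(0)$ ne s'\'echappe en temps fini : le flot est complet et fournit un diff\'eomorphisme $F\times R\to M$ commutant avec $p$. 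L'obstacle central est que le flot d'un champ de Nash n'est en g\'en\'eral \emph{pas} un objet de Nash (r\'esoudre une \'equation diff\'erentielle fait sortir de la cat\'egorie alg\'ebrique) : c'est pr\'ecis\'ement cette difficult\'e que le th\'eor\`eme de Coste et Shiota contourne.

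Sur $R=\R$, je proc\'ederais donc par \textbf{approximation puis correction}. Le flot ci-dessus fournit une trivialisation de classe $C^\infty$ semi-alg\'ebrique $\phi\colon F\times R\to M$. Les th\'eor\`emes d'approximation de Nash (Efroymson, Shiota ; voir \cite{bochnak1998real}) permettent de remplacer $\phi$ par une application de Nash $\psi\colon F\times R\to M$ arbitrairement proche de $\phi$ pour la topologie $C^1$. Pour $\psi$ assez proche de $\phi$, l'application $\psi$ reste une immersion propre injective, donc un diff\'eomorphisme de Nash sur $M$ ; la compacit\'e des fibres (cons\'equence de la propret\'e) et le caract\`ere semi-alg\'ebrique, donc uniforme, des \'enonc\'es d'approximation permettent de conclure malgr\'e la non-compacit\'e de la base $R$. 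Il reste \`a corriger $\psi$ pour qu'elle commute \emph{exactement} avec $p$. Posons $g\coloneqq p\circ\psi\colon F\times R\to R$ ; c'est une fonction de Nash proche de la projection $\pi_R$, avec $\partial g/\partial t$ proche de $1$, donc partout non nulle. Pour chaque $x\in F$, l'application $t\mapsto g(x,t)$ est alors un diff\'eomorphisme de Nash de $R$ sur $R$, dont l'inverse $\theta(x,\cdot)$ est de Nash par le th\'eor\`eme des fonctions implicites ; ainsi $\tau(x,s)\coloneqq (x,\theta(x,s))$ est un diff\'eomorphisme de Nash de $F\times R$, et $h\coloneqq \psi\circ\tau$ est un diff\'eomorphisme de Nash v\'erifiant $p\circ h=\pi_R$, ce qui conclut sur $\R$.

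Pour un corps r\'eel clos $R$ \emph{arbitraire}, je ram\`enerais l'\'enonc\'e au cas $R=\R$ par le principe de transfert de Tarski--Seidenberg. Une fonction de Nash sur un ouvert semi-alg\'ebrique de $R^n$ est enti\`erement d\'ecrite par un polyn\^ome $P(x,y)$ tel que $P(x,f(x))=0$ et $\partial P/\partial y\neq 0$, c'est-\`a-dire par une donn\'ee finie ; l'existence d'un diff\'eomorphisme de Nash $h$ satisfaisant $p\circ h=\pi_R$, \emph{une fois born\'ee la complexit\'e} (degr\'es des polyn\^omes d\'efinissant $M$, $p$ et $h$), devient un \'enonc\'e du premier ordre dans le langage des corps ordonn\'es, donc vrai sur tout corps r\'eel clos d\`es qu'il l'est sur $\R$ par \'equivalence \'el\'ementaire. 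Le point d\'elicat de cette \'etape est l'obtention d'une \textbf{borne uniforme sur la complexit\'e} de la trivialisation en fonction de celle des donn\'ees, c'est-\`a-dire une forme de finitude de la structure de Nash sous-jacente ; c'est l\`a le c\oe ur technique du travail de Coste et Shiota. Une voie alternative, dans l'esprit de la seconde preuve du th\'eor\`eme \ref{critere-connexe}, consisterait \`a travailler directement sur le spectre r\'eel et \`a exploiter la th\'eorie de Scheiderer \cite{scheiderer1994real} pour \'eviter le recours \`a $\R$.

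En r\'esum\'e, la difficult\'e principale n'est pas g\'eom\'etrique mais porte sur la \emph{r\'egularit\'e} : le proc\'ed\'e d'int\'egration qui trivialise la fibration est transcendant, alors que l'on exige un r\'esultat alg\'ebrique (Nash). Le contournement par approximation puis correction par inversion implicite permet de rester dans la cat\'egorie de Nash sur $\R$, et le transfert semi-alg\'ebrique, moyennant les bornes de complexit\'e, \'etend le r\'esultat \`a tout corps r\'eel clos.
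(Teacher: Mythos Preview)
Le th\'eor\`eme est attribu\'e \`a Coste et Shiota, et la \og preuve \fg{} de l'article ne fait que citer directement leur r\'esultat \cite[Theorem A, Theorem 2.4(iii')]{coste1992nash} comme bo\^ite noire : on observe que $p$ est surjective (submersion donc ouverte, propre donc ferm\'ee), on applique le th\'eor\`eme cit\'e pour obtenir un diff\'eomorphisme de Nash $h'\colon F\times R\to M$ commutant avec $p$ pour une certaine vari\'et\'e de Nash affine $F$, puis on identifie $F$ \`a $p^{-1}(0)$ par restriction en $0$ et l'on pose $h\coloneqq h'\circ(p_0^{-1}\times\mathrm{id}_R)$. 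C'est tout.

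Tu as tent\'e quelque chose de bien plus ambitieux : esquisser une d\'emonstration \emph{interne} du th\'eor\`eme de Coste--Shiota lui-m\^eme (Ehresmann lisse, approximation de Nash, correction implicite, transfert de Tarski). L'esquisse est raisonnable dans ses grandes lignes et identifie correctement les difficult\'es, mais elle n'est pas une preuve compl\`ete : tu reconnais toi-m\^eme que l'\'etape de transfert repose sur des bornes uniformes de complexit\'e qui constituent \og le c\oe ur technique du travail de Coste et Shiota \fg{}, de sorte que ton argument finit par d\'ef\'erer au m\^eme article que celui que tu cherchais \`a \'eviter. Par ailleurs, l'\'etape d'approximation sur $\R$ demande de l'attention : il faut approcher une application $C^\infty$ semi-alg\'ebrique entre vari\'et\'es de Nash (et non simplement une fonction), contr\^oler la propret\'e et l'injectivit\'e de l'approximation sur une base non compacte, et v\'erifier que le r\'esultat est bien un diff\'eomorphisme \emph{sur} $M$ ; ces points sont d\'elicats et ne sont pas trait\'es. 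En r\'esum\'e, pour ce th\'eor\`eme-ci, la d\'emarche attendue est la citation directe, et ton d\'eveloppement, bien qu'instructif, n'aboutit pas \`a une preuve autonome.
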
  

\begin{proof}
    On peut supposer $M$ non vide. Comme toute submersion est ouverte et toute application propre est ferm\'ee, l'image de $p$ est ouverte, ferm\'ee et non vide, et donc $p$ est surjective. D'apr\`es \cite[Theorem A, Theorem 2.4(iii')]{coste1992nash}, il existe alors une vari\'et\'e de Nash affine $F$ et un diff\'eomorphisme de Nash $h'\colon F\times R\to M$ tel que $p\circ h'$ soit la projection sur $R$. Par restriction \`a $0\in R$, on d\'eduit un diff\'eomorphisme de Nash $p_0\colon F\to p^{-1}(0)$. Il suffit de poser $h\coloneqq h'\circ (p_0^{-1}\times \on{id}_R)$ pour conclure. (Par convention, toutes les vari\'et\'es de Nash consid\'er\'ees dans \cite{coste1992nash} sont affines; voir \cite[p. 351]{coste1992nash}.)
\end{proof}

\begin{proof}[Premi\`ere d\'emonstration du th\'eor\`eme \ref{critere-connexe}]
Posons $R\coloneqq \puiseux$. D'apr\`es \cite[Th\'eor\`eme 2.4.5]{bochnak1987geometrie}, les composantes connexes et les composantes semi-alg\'ebriquement con\-ne\-xes de $X_0(\R)$ coïncident. Par \cite[Proposition 5.3.6]{bochnak1998real}, $X_0(\R)$ et $X_0(R)$ ont le m\^eme nombre de composantes semi-alg\'e\-brique\-ment connexes. Il suffit donc de montrer que les semi-alg\'ebriques $X_0(R)$ et $X_\eta(R)$ ont le m\^eme nombre de composantes semi-alg\'e\-brique\-ment connexes. On prouvera que $X_0(R)$ et $X_\eta(R)$ sont Nash-diff\'eo\-morphes entre eux: le th\'eor\`eme en d\'ecoule parce qu'un diff\'eomorphisme de Nash est ouvert et ferm\'e et l'image d'un espace semi-alg\'ebri\-quement connexe par une application semi-alg\'ebrique est encore semi-alg\'ebri\-que\-ment connexe.

Comme la fibre g\'en\'erique du morphisme $f$ est lisse, passant \`a un ouvert de Zariski $U\subset \A^1_{\R}$ contenant $0$ plus petit si n\'ecessaire, on peut supposer que $f$ est lisse au-dessus de $U\setminus\{0\}$. Soit $V\subset X$ le plus grand sous-sch\'ema ouvert de $X$ tel que la restriction de $f$ \`a $V$ soit lisse. Comme le morphisme $f$ est plat, l'hypoth\`ese que $X_0(\R)$ est contenu dans le lieu lisse de $X_0$ entra\^ine que $X_0(\R)$ est contenu dans $V$. On a donc $V(\R)=X(\R)$, et le th\'eor\`eme d'homomorphisme d'Artin--Lang \cite[Theorem 4.1.2]{bochnak1998real} entra\^ine alors $V(R)=X(R)$. En particulier, $X(R)$ est une vari\'et\'e de Nash affine sur $R$. 
De plus, l'application $f(R)$ est une submersion de Nash propre : c'est une  submersion  car $V(R)=X(R)$ et elle est propre car le morphisme $f_R$ est propre.

Comme l'ouvert $U\subset \A^1_{\R}$ contient $0$, il existe $\epsilon\in \R_{>0}$ tel que l'intervalle ouvert $(-\epsilon,\epsilon)\subset \R$ soit contenu dans $U(\R)$. Si $Y\subset \A^1_{\R}$ est le compl\'ementaire de $U$, l'application $Y(\R)\to Y(R)$ est bijective (c'est un cas tr\`es facile du th\'eor\`eme d'homomorphisme d'Artin--Lang). On en d\'eduit que l'\'el\'ement $t\in R=\A^1_R(R)$ appartient \`a $U(R)$ et que l'intervalle ouvert $(-\epsilon,\epsilon)\subset R$ est contenu dans $U(R)$. Le morphisme $\eta\colon \operatorname{Spec}(R) \to U$ se factorise par le morphisme $\operatorname{Spec}(R) \to U_R$ correspondant \`a l'\'el\'ement $t\in U(R)$. Comme $\epsilon\in \R_{>0}$ et $t$ est infinit\'esimal, on a $-\epsilon < t<\epsilon$. L'intervalle $(-\epsilon,\epsilon)\subset R$ \'etant Nash-diff\'eomorphe \`a $R$, le th\'eor\`eme \ref{coste-shiota-thom} entra\^ine alors que $X_0(R)$ et $X_\eta(R)$ sont Nash-diff\'eomorphes entre eux, comme voulu.
\end{proof}

\subsection{Deuxi\`eme preuve du th\'eor\`eme \ref{critere-connexe}}
Le th\'eor\`eme \ref{critere-connexe} est un cas particulier du
th\'eor\`eme suivant, qui est une l\'eg\`ere extension de \cite[Corollaire 17.20 (a)]{scheiderer1994real}. Pour tout sch\'ema $X$, on note par $X_r$ son spectre r\'eel, avec la topologie engendr\'ee par les domaines de positivit\'e \cite[(0.4.1)-(0.4.2)]{scheiderer1994real}, et on \'ecrit $X_{ret}$ pour son site \'etale r\'eel \cite[Definition 1.2.1]{scheiderer1994real}, qui est la cat\'egorie $\mathrm{Et}/X$ des $X$-sch\'emas \'etales avec les recouvrements donn\'es par les morphismes r\'eels surjectifs au sens de \cite[Definition (1.1)]{scheiderer1994real}, c'est-\`a-dire les collections de morphismes $\{f_i\colon U_i\to U\}$ dans $\mathrm{Et}/X$ telles que $U_r$ soit la r\'eunion des $f_{ir}(U_{ir})$. D'apr\`es \cite[Theorem 1.3]{scheiderer1994real}, les topo\"i associ\'es \`a $X_r$ et $X_{ret}$ sont naturellement \'equivalents entre eux. Par d\'efinition, un faisceau ab\'elien $\mc{F}$ sur $X_r$ est dit constructible si $X_r$ est la r\'eunion d'un nombre fini de $K_i\subset X_r$ constructibles tels que $\mc{F}|_{K_i}$ soit localement constant avec fibres de type fini; voir \cite[Definition (A.3)]{scheiderer1994real}.

\begin{thm}\label{loc-const}
Soit $R$ un corps r\'eel clos et soit $f\colon  X\to U$ un morphisme propre de $R$-vari\'et\'es. Supposons ce morphisme  lisse en tout $x\in X(R)$. Soit $\mc{F}$ un faisceau ab\'elien constructible localement constant sur $X_r$. Alors, pour tout $n\geq 0$, le faisceau $R^n f_{r*} \mc{F}$ est constructible et localement constant sur $U_r$.
\end{thm}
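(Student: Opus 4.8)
The plan is to reduce the statement to \cite[Corollaire 17.20]{scheiderer1994real}; the only gain over the hypotheses there is that $f$ is allowed to be non-smooth away from the real locus, and the mechanism that makes this harmless is that real étale cohomology does not see the non-real part of a scheme.

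First I would discard the relative singular locus. Let $S\subset X$ be the closed subscheme over which $f$ fails to be smooth. By hypothesis $X(R)\cap S=\emptyset$, i.e.\ $S(R)=\emptyset$. Now for any $R$-variety $Z$ one has $Z(R)=\emptyset$ if and only if $Z_r=\emptyset$: covering $Z$ by affine opens reduces to $Z=\Spec A$ with $A$ a finitely generated $R$-algebra, and then both conditions are equivalent to $-1\in\sum A^2$ --- for $Z_r$ this is a standard property of the real spectrum, and for $Z(R)$ it is the Artin--Lang homomorphism theorem \cite[Theorem 4.1.2]{bochnak1998real}. Hence $S_r=\emptyset$, so the open immersion $j\colon V\coloneqq X\setminus S\hookrightarrow X$ induces a homeomorphism $V_r\xrightarrow{\ \sim\ }X_r$, and therefore an equivalence of real étale topoi by \cite[Theorem 1.3]{scheiderer1994real}. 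This equivalence is compatible with the structure morphisms to $U$ and carries $\mc{F}$ to $\mc{F}|_V$, so that $R^n f_{r*}\mc{F}\cong R^n g_{r*}(\mc{F}|_V)$ as sheaves on $U_r$, where now $g\coloneqq f|_V\colon V\to U$ is a \emph{smooth} morphism.

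Then I would invoke Scheiderer. The morphism $g$ is smooth but in general no longer proper; what survives is that, under the identification above, the induced morphism of real spectra $g_r\colon V_r\to U_r$ is the morphism $f_r\colon X_r\to U_r$, and the latter is proper because $f$ is a proper morphism of schemes. On the level of real spectra we are thus in the situation of \cite[Corollaire 17.20]{scheiderer1994real}: constructibility of $R^n f_{r*}\mc{F}=R^n g_{r*}(\mc{F}|_V)$ is the constructibility finiteness theorem for proper morphisms in real étale cohomology, applied to $f$ itself (it needs no smoothness), while local constancy is obtained by running the proof of part (a) of that corollary --- which rests on real étale proper base change, available since $g_r$ is proper, together with a local triviality statement for the real fibres, available since $g$ is smooth.

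The main obstacle is exactly this last reconciliation: $g$ is smooth but not proper while $f$ is proper but not smooth, whereas \cite[Cor.\ 17.20(a)]{scheiderer1994real} is stated for a single morphism that is at once proper and smooth. One must therefore enter Scheiderer's argument for local constancy and check that properness is used only through the map of real spectra (hence through $g_r=f_r$) and smoothness only through the morphism $g$; equivalently, that his proof of 17.20(a) already works when the ambient $f$ is smooth merely \emph{along $X(R)$}, which is our hypothesis. A minor bookkeeping point --- routine from the functoriality of the real spectrum and \cite[Theorem 1.3]{scheiderer1994real} --- is to verify that the equivalence $V_r\simeq X_r$ respects the morphisms to $U$, so that the two higher direct images really are identified as sheaves on $U_r$.
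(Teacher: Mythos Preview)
Your reduction $V_r\simeq X_r$ via Artin--Lang is correct and is exactly the mechanism the paper exploits, but the paper deploys it at a different stage. Rather than excising the singular locus at the outset, the paper keeps the proper morphism $f$ throughout: constructibility comes from \cite[Theorem 17.7]{scheiderer1994real} applied to $f$, and for local constancy the paper reproduces the proof of \cite[Corollaire 17.20(a)]{scheiderer1994real} for $f$ --- reducing via cospecialisation maps and proper base change to $U=\Spec(B)$ with $B$ a real closed valuation ring, then showing $h_r^*\colon H^n(X_r,\mc{F})\to H^n(X_{\eta r},h_r^*\mc{F})$ is an isomorphism through the Leray spectral sequence for the open immersion $h\colon X_\eta\hookrightarrow X$. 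Only at that last, local-on-$X_r$, step does the paper pass to the smooth locus and invoke smooth base change \cite[Theorem 16.11]{scheiderer1994real}.

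Your global replacement of $X$ by $V$ is conceptually clean but buys nothing: you still cannot cite \cite[Corollaire 17.20(a)]{scheiderer1994real} as a black box for $g$, and your claim that ``properness is used only through the map of real spectra'' is not what \cite[Theorem 16.2]{scheiderer1994real} says --- that result is stated for proper morphisms of \emph{schemes}. What actually makes your route work is that proper base change applies to $f$, and the identification $V_r\simeq X_r$ persists after any base change (since $S_r=\emptyset$ forces $-1$ to be a sum of squares in every residue field of $S$, a condition stable under field extension). So you are forced to unpack Scheiderer's argument anyway, applying the proper steps to $f$ and the smooth step to $g$; that is precisely the paper's proof with the passage to $V$ moved forward. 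Finally, the ingredient you call ``a local triviality statement for the real fibres'' is, in Scheiderer and in the paper, the smooth base change theorem --- no triviality of fibres enters.
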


\begin{proof}
Soient $\eta$ et $\xi$ deux points de $U$ tels que $\xi$ soit une sp\'ecialisation de $\eta$ et soit $\mc{G}$ un faisceau sur $U_r$. On note par $\mc{G}_{\xi}$ et $\mc{G}_{\eta}$ les fibres de $\mc{G}$ en $\xi$ et $\eta$, respectivement. Soit $V\subset U_r$ un voisinage ouvert de $\xi$. Alors $V$ contient $\eta$ et on dispose donc d'une application naturelle $\mc{G}(V)\to \mc{G}_\eta$. Par passage \`a la limite inductive en $V$, on obtient une fl\`eche canonique
\[\on{cosp}_{\eta\leadsto\xi}\colon \mc{G}_{\xi} \to \mc{G}_{\eta}\]
qui est dite application de cosp\'ecialisation; voir \cite[(5) p. 203]{scheiderer1994real} et \cite[Chapitre VIII, 7.7]{sga4II}. Par construction, pour tout voisinage ouvert $V$ de $\xi$ et tout voisinage ouvert $W$ de $\eta$ contenu dans $V$, on a un diagramme commutatif
\begin{equation}
\begin{tikzcd}\label{cosp-square}
    \mc{G}(V) \arrow[r] \arrow[d] & \mc{G}(W) \arrow[d] \\
    \mc{G}_\xi \arrow[r,"\on{cosp}_{\eta\leadsto\xi}"] & \mc{G}_\eta.
\end{tikzcd}
\end{equation}

On revient \`a la preuve du th\'eor\`eme \ref{loc-const}. Si le morphisme propre $f$ \'etait lisse, l'\'enonc\'e suivrait de \cite[Corollaire 17.20 (a)]{scheiderer1994real}. On adapte le m\^eme argument au cas g\'en\'eral. Par \cite[Theorem 17.7]{scheiderer1994real}, pour tout $n\geq 0$ le faisceau $R^n f_{r*} \mc{F}$ est constructible. Par cons\'equent, il suffit de montrer que pour toute sp\'ecialisation $\eta \leadsto \xi$ dans $U_r$, les applications de cosp\'ecialisation
\[\on{cosp}_{\eta\leadsto\xi}\colon (R^n f_{r*} \mc{F})_{\xi} \to (R^n f_{r*} \mc{F})_{\eta}\] sont des isomorphismes. On peut remplacer $U$ par la clôture sch\'ematique de $\{\eta\}$ et donc se r\'eduire au cas o\`u $U$ est int\`egre avec point g\'en\'erique $\eta$.

Soit $B$ l'enveloppe convexe de l'anneau local $O_{U,\xi}$ dans une clôture r\'eelle $R'$ du corps des fractions de $U$; donc $B$ est un anneau de valuation r\'eel clos au sens de \cite[(1.5)]{scheiderer1994real}. L'espace topologique $\on{Spec}(B)$ est ordonn\'e lin\'eairement par la relation de sp\'ecialisation; on montre qu'il est fini. Soit $p_0\subsetneq p_1\subsetneq\dots p_m$ une chaîne d’id\'eaux premiers de $B$ de longueur $m$, soit $x_i\in p_i\setminus p_{i-1}$ pour tout $i=1,\dots, m$ et soit $C\subset B$ la sous-$R$-alg\`ebre de $B$ engendr\'ee par les $x_i$. Les id\'eaux $p_i\cap C$ forment une chaîne d’id\'eaux premiers de $C$ de longueur $m$ et donc $m\leq \on{dim}(C)$. Comme $C$ est de type fini sur $R$, par le th\'eor\`eme de la dimension $\on{dim}(C)=\on{tr}_R(\on{Frac}(C))\leq \on{tr}_R(R')$, o\`u $\on{tr}$ est le degr\'e de transcendance. On conclut que $m\leq \on{tr}_R(R')$ et donc que l'ensemble sous-jacent \`a $\on{Spec}(B)$ est fini. En particulier, le point g\'en\'erique de $\on{Spec}(B)$ est ouvert. 

Le morphisme compos\'e $\Spec(B) \to \on{Spec}(O_{U,\xi})\to U$ envoie le point g\'en\'erique de $\on{Spec}(B)$ vers $\eta$ et le point ferm\'e de $\on{Spec}(B)$ vers $\xi$. Comme le morphisme $f$ est propre et de pr\'esentation finie, par le th\'eor\`eme de changement de base propre pour la topologie r\'eelle \'etale  \cite[Theorem 16.2 (a), $t=ret$]{scheiderer1994real} on peut remplacer $U$ par $\Spec(B)$. Donc $U = \Spec(B)$, o\`u $B$ est un anneau de valuation r\'eel clos, $\eta$ est le point g\'en\'erique de $U$ et $\xi$ est le point ferm\'e de $U$. On observe que $U$ (et donc $X$) n'est plus une $R$-vari\'et\'e en g\'en\'eral.

Soit $g\colon \eta \to U$ l'immersion ouverte donn\'ee par l'inclusion du point g\'en\'erique de $U$ et soit $X_\eta$ la fibre g\'en\'erique de $f$, de sorte que l'on a un carr\'e cart\'esien
\[
\begin{tikzcd}
X_\eta \arrow[r,"h"] \arrow[d,"f_\eta"] & X \arrow[d,"f"] \\
\eta \arrow[r,"g"] & U,
\end{tikzcd}
\]
o\`u les fl\`eches horizontales sont des immersions ouvertes. Le diagramme commutatif (\ref{cosp-square}) pour $\mc{G}=R^nf_*\mc{F}$, $V=U$ et $W=\{\eta\}$ prend la forme 
\[
\begin{tikzcd}
    H^n(X_r, \mc{F}) \arrow[r,"h_r^*"] \arrow[d,"\wr"]  &  H^n(X_{\eta r}, h_r^* \mc{F}) \arrow[d,"\wr"] \\
    (R^n f_{r*} \mc{F})_{\xi} \arrow[r,"\on{cosp}_{\eta\leadsto\xi}"] &  (R^n f_{r*} \mc{F})_{\eta}.
\end{tikzcd}
\]
Ici la fl\`eche du haut coïncide avec l'application induite par l'immersion ouverte $h_r$. La fl\`eche verticale de gauche est un isomorphisme car  $U_r$ (resp. $\{\eta\}$) est le plus petit ouvert de $U_r$ contenant $\xi$ (resp. $\eta$). (On rappelle que d'apr\`es \cite[(1.5)]{scheiderer1994real}, l'espace topologique sous-jacent au sch\'ema $U$ est hom\'eomorphe \`a $U_r$.)

Il suffit donc de montrer que l'application $h_r^*\colon H^n(X_r, \mc{F})\to H^n(X_{\eta r}, h_r^* \mc{F})$ est un isomorphisme. Cette application est un homomorphisme de coin dans la suite spectrale de Leray pour le morphisme $h_r\colon X_{\eta r}\to X_r$ et le faisceau $h_r^*\mc{F}$ sur $X_{\eta r}$:
\[E_2^{pq}\coloneqq H^p(X_r,R^qh_{r*}h_r^{*}\mc{F})\Longrightarrow H^{p+q}(X_{\eta r},h_r^*\mc{F}).\]
 Il suffit alors de prouver que la fl\`eche canonique
$\mc{F} \to h_{r*}h_r^* \mc{F}$ est un isomorphisme et que $R^q h_{r*} h_r^* \mc{F} = 0$ pour tout $q\geq 1$.
Ces deux assertions sont locales sur $X_r$ par rapport \`a sa topologie r\'eelle. 
Soit $(P,\alpha)\in X_r$ un point, c'est-\`a-dire que l'on a un point sch\'ematique $P\in X$ et  un  ordre $\alpha$ sur le corps r\'esiduel de $P$. Les hypoth\`eses entraînent que le morphisme $f$ est lisse en $P$. Pour montrer que les deux assertions sont satisfaites au voisinage de $P$, on peut alors remplacer $X$ par l'ouvert maximal de lissit\'e de $f$. Donc, renonçant ainsi \`a la propret\'e de $f$, on peut supposer que le morphisme $f$ est lisse. 

Remplaçant $X$ par un recouvrement $X'\to X$ dans $X_{ret}$ (c'est-\`a-dire, un morphisme \'etale tel que le morphisme induit $X'_r\to X_r$ soit surjectif) tel que $\mc{F}|_{X'}$ soit constant, on se r\'eduit au cas o\`u $\mc{F}$ est constant de fibre un groupe ab\'elien de type fini $M$. Alors
\[ R^q h_{r*} h_r^* \mc{F} = R^q h_{r*} M = R^q h_{r*}f_{\eta r}^* M \xleftarrow{\sim} f_r^* R^q g_{r*} M, \]
le dernier isomorphisme \'etant donn\'e par le th\'eor\`eme de changement de base lisse pour la topologie $t=ret$; voir
\cite[Theorem 16.11]{scheiderer1994real}. Pour appliquer ce th\'eor\`eme, il faut rappeler que, pour tout sch\'ema $S$, tout faisceau ab\'elien sur $S_{ret}$ est admissible \cite[Definition 16.4.1 (a)]{scheiderer1994real}. Comme $u_0$ est le spectre d'un corps r\'eel clos, le foncteur $g_{r*}$ est exact: si $\mc{F}_1\to \mc{F}_2$ est un morphisme surjectif de faisceaux sur $u_0$, alors comme $u_0$ n'admet pas de recouvrement non trivial dans $(u_0)_{ret}$, le morphisme $\mc{F}_1(u_0)\to \mc{F}_2(u_0)$ des sections globales est surjectif, c'est-\`a-dire, $\mc{F}_1\to \mc{F}_2$ est surjectif en tant que morphisme de pr\'efaisceaux, et donc il en est de même pour le morphisme $g_{r*}\mc{F}_1\to g_{r*} \mc{F}_2$. Puisque $g_{r*}M$ est le faisceau constant $M$ sur $U_r$, ceci ach\`eve la d\'emonstration.
\end{proof}

\begin{proof}[Deuxi\`eme d\'emonstration du th\'eor\`eme \ref{critere-connexe}]
    Soit $m_0$ (resp. $m_\eta$) le nombre de composantes connexes semi-alg\'ebriques de $X_0$ (resp. $X_\eta$). Le th\'eor\`eme \ref{loc-const} et le th\'eor\`eme de changement de base propre \cite[Theorem 16.2 ($t=ret$)]{scheiderer1994real}, appliqu\'es au faisceau constant $\mc{F}=\Z_{X_r}$ et $n=0$, donnent un isomorphisme entre $H^0(X_{0r},\Z)\cong \Z^{m_0}$ et $H^0(X_{\eta r},\Z)\cong\Z^{m_\eta}$, ce qui entraîne $m_0=m_\eta$.
\end{proof}

\subsection{Invariance birationnelle du nombre de composantes semi-alg\'e\-bri\-ques connexes} Pour calculer le nombre de composantes connexes de certaines vari\'et\'es singuli\`eres, le lemme suivant nous sera utile.

\begin{lemma}\label{connexe-invariant}
    Soit $R$ un corps r\'eel clos. Le nombre de composantes semi-alg\'ebri\-que\-ment connexes de $X(R)$ est un invariant birationnel des $R$-vari\'et\'es projectives int\`egres $X$ telles que $X(R)$ soit contenu dans le lieu lisse de $X$.
\end{lemma}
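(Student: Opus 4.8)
Le plan est de se ramener, par r\'esolution des singularit\'es puis factorisation faible, au cas d'un \'eclatement le long d'un centre lisse, que l'on traite directement. Soit donc $\phi\colon X\dashrightarrow Y$ une application birationnelle entre $R$-vari\'et\'es projectives int\`egres dont les lieux r\'eels sont contenus dans les lieux lisses. Comme $R$ est de caract\'eristique z\'ero, je commencerais par choisir une r\'esolution des singularit\'es projective birationnelle $\pi\colon \widetilde X\to X$, isomorphisme au-dessus du lieu lisse $X_{\mathrm{lisse}}$ de $X$. L'hypoth\`ese $X(R)\subset X_{\mathrm{lisse}}$ signifie que le lieu singulier de $X$ n'a aucun $R$-point ; le lieu exceptionnel de $\pi$ se projetant dans ce lieu singulier, il n'a lui non plus aucun $R$-point, et $\pi$ induit donc un isomorphisme d'espaces semi-alg\'ebriques $\widetilde X(R)\xrightarrow{\ \sim\ }X_{\mathrm{lisse}}(R)=X(R)$. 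En particulier $\widetilde X(R)$ et $X(R)$ ont le m\^eme nombre de composantes semi-alg\'ebriquement connexes, et $\widetilde X$ est lisse, projective et int\`egre. Proc\'edant de m\^eme pour $Y$, on se ram\`ene au cas o\`u $X$ et $Y$ sont lisses.

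$X$ et $Y$ \'etant d\'esormais lisses, projectives et int\`egres sur le corps $R$ de caract\'eristique z\'ero, j'appliquerais le th\'eor\`eme de factorisation faible d'Abramovich--Karu--Matsuki--W\l odarczyk, valable sur tout corps de caract\'eristique z\'ero : l'application $\phi$ se factorise en une cha\^ine $X=X_0\dashrightarrow X_1\dashrightarrow\dots\dashrightarrow X_n=Y$ de $R$-vari\'et\'es lisses compl\`etes int\`egres dans laquelle chaque fl\`eche, ou son inverse, est l'\'eclatement le long d'une sous-vari\'et\'e ferm\'ee lisse. L'\'enonc\'e \'etant sym\'etrique en les deux vari\'et\'es et transitif le long d'une telle cha\^ine, il suffirait de le prouver pour un \'eclatement $\pi\colon \widetilde X=\operatorname{Bl}_C X\to X$ d'une $R$-vari\'et\'e lisse compl\`ete int\`egre $X$ le long d'une sous-vari\'et\'e ferm\'ee lisse $C$, de codimension $r+1\geq 2$ (l'\'eclatement \'etant un isomorphisme si la codimension vaut $\leq 1$).

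Pour ce dernier cas, l'application induite $\pi(R)\colon \widetilde X(R)\to X(R)$ est semi-alg\'ebrique, ferm\'ee car $\pi$ est propre (cf. \S\ref{ehres}), et surjective : sa fibre au-dessus d'un point de $X(R)\setminus C(R)$ est un point, et au-dessus d'un point de $C(R)$ c'est $\P^r(R)$ ; toutes ses fibres sont donc non vides et semi-alg\'ebriquement connexes. Je conclurais alors par le fait g\'en\'eral suivant : une application semi-alg\'ebrique ferm\'ee surjective $g\colon A\to B$ \`a fibres semi-alg\'ebriquement connexes induit une bijection entre les composantes semi-alg\'ebriquement connexes de $A$ et celles de $B$. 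En effet, pour une composante $B_0$ de $B$, la partie $g^{-1}(B_0)$ est ouverte et ferm\'ee dans $A$ ; si $g^{-1}(B_0)=P\sqcup Q$ avec $P$ et $Q$ ferm\'es non vides, alors $g(P)$ et $g(Q)$ sont ferm\'es et recouvrent le connexe $B_0$, donc s'y rencontrent en un point $b$, et $g^{-1}(b)$ rencontrerait \`a la fois $P$ et $Q$, contredisant sa connexit\'e ; ainsi $g^{-1}(B_0)$ est de surcro\^it connexe, c'est donc une composante de $A$, et l'on obtient la bijection cherch\'ee en faisant varier $B_0$. Appliqu\'e \`a $g=\pi(R)$, ceci montre que $\widetilde X(R)$ et $X(R)$ ont le m\^eme nombre de composantes semi-alg\'ebriquement connexes, ce qui ach\`everait la preuve.

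Le pas le plus d\'elicat est la r\'eduction au cas lisse : c'est l\`a que sert de fa\c{c}on essentielle l'hypoth\`ese $X(R)\subset X_{\mathrm{lisse}}$, car elle garantit que le lieu exceptionnel d'une r\'esolution n'a pas de $R$-point (une singularit\'e r\'eelle pourrait au contraire faire appara\^itre ou dispara\^itre des composantes, comme le montre d\'ej\`a une courbe nodale \`a n\oe ud r\'eel). Le recours \`a la factorisation faible est par ailleurs l'ingr\'edient le plus lourd ; une variante plus \'el\'ementaire consisterait \`a r\'esoudre les ind\'etermin\'ees de $\phi$ pour obtenir, depuis une m\^eme $R$-vari\'et\'e lisse, des morphismes birationnels vers $X$ et vers $Y$, mais il faudrait alors contr\^oler les fibres de ces morphismes le long du lieu r\'eel, ce qui reviendrait \`a un argument analogue \`a celui du cas d'un \'eclatement.
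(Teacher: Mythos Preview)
Votre r\'eduction au cas lisse par r\'esolution d'Hironaka est exactement l'argument du papier : puisque le lieu singulier n'a pas de $R$-point, la r\'esolution induit un isomorphisme semi-alg\'ebrique sur les $R$-points. C'est bien le point o\`u l'hypoth\`ese $X(R)\subset X_{\mathrm{lisse}}$ est cruciale, comme vous le soulignez.

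La diff\'erence se situe dans le traitement du cas lisse. Le papier ne le red\'emontre pas : il invoque le fait classique que le nombre de composantes semi-alg\'ebriquement connexes est un invariant birationnel des $R$-vari\'et\'es projectives lisses, avec renvoi \`a \cite[\S 13]{delfs1981semialgebraicII} et \cite[Theorem 3.4.12]{bochnak1998real}. Ces r\'ef\'erences proc\`edent plut\^ot par excision d'un ferm\'e de codimension $\geq 2$ (le lieu d'ind\'etermination d'une application birationnelle entre vari\'et\'es lisses est de codimension $\geq 2$, et retirer un tel ferm\'e des $R$-points d'une vari\'et\'e lisse ne change pas les composantes connexes). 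Votre route par factorisation faible est correcte et auto-contenue, mais nettement plus co\^uteuse : vous invoquez un th\'eor\`eme profond (Abramovich--Karu--Matsuki--W\l odarczyk) l\`a o\`u un argument de dimension suffit. Votre analyse finale des \'eclatements via une application propre \`a fibres connexes est d'ailleurs exactement le contenu de la remarque~\ref{fibres-connexes} du papier. En somme, votre preuve est juste mais surdimensionn\'ee ; la variante \og \'el\'ementaire \fg{} que vous \'evoquez en conclusion (r\'esoudre les ind\'etermin\'ees) est plus proche de l'approche classique et m\'eriterait d'\^etre pr\'ef\'er\'ee.
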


\begin{proof}
    L'invariance birationnelle du nombre de composantes semi-alg\'e\-bri\-quement connexes des $R$-points des $R$-vari\'et\'es projectives lisses est un fait classique \cite[\S 13]{delfs1981semialgebraicII}, \cite[Theorem 3.4.12]{bochnak1998real}. Il suffit donc de montrer que, pour toute $R$-vari\'et\'e projective int\`egre $X$ telle que $X(R)$ soit contenu dans le lieu lisse de $X$, il existe une vari\'et\'e projective lisse $X'$, birationnelle \`a $X$, telle que $X(R)$ et $X'(R)$ aient le même nombre de composantes semi-alg\'ebri\-quement  connexes. D'apr\`es Hironaka, il existe une vari\'et\'e projective lisse $X'$ et un morphisme projectif birationnel $X'\to X$ qui est un isomorphisme au-dessus du lieu lisse de $X$. Comme le lieu lisse de $X$ contient $X(R)$, on d\'eduit que $X(R)$ et $X'(R)$ sont semi-alg\'ebriquement isomorphes, et donc ils ont le même nombre de composantes semi-alg\'ebriquement connexes, comme voulu.
\end{proof}

\begin{rmk}\label{fibres-connexes}
    Soient $R$ un corps r\'eel clos et $f\colon X\to Y$ un morphisme ferm\'e ou ouvert d'espaces semi-alg\'ebriques sur $R$. Supposons que pour tout $y\in Y$ la fibre $f^{-1}(y)$ est semi-alg\'ebriquement connexe. Si $Y$ est semi-alg\'ebriquement connexe, alors $X$ l'est aussi d'apr\`es \cite[Exercise 4.4.1(a)]{scheiderer2024course}. Plus g\'en\'eralement, si $Y_1,\dots,Y_r$ sont les composantes semi-alg\'ebriquement connexes de $Y$ et, pour tout $i=1,\dots,r$, on pose $X_i\coloneqq f^{-1}(Y_i)$, alors $X_1,\dots,X_r$ sont les composantes semi-alg\'ebriquement connexes de $X$.
\end{rmk}

\section{La m\'ethode de sp\'ecialisation revisit\'ee, I}
\label{parasp}

\subsection{Sur un corps quelconque}

La m\'ethode suivie est une variante de la m\'ethode de sp\'ecialisation sur un corps quelconque, sous la forme 
d\'evelopp\'ee dans \cite[\S 1]{colliot2016hypersurfaces}.

\begin{situation}\label{situation-phi}
    Soit $L/k$ une extension de corps de caract\'eristique z\'ero, soit $Y$ une $k$-vari\'et\'e propre et g\'eom\'etriquement int\`egre, soit $S\subset Y$ le lieu singulier de $Y$ (avec structure r\'eduite), soit $U\coloneqq Y\setminus S$ l'ouvert compl\'ementaire de $S$ et soit $b\in U(k)$. Soient $Z$ une $k$-vari\'et\'e propre lisse g\'eom\'etriquement connexe et $p\colon Z \to Y$ un $k$-morphisme propre birationnel tel que la restriction $p^{-1}(U) \to U$ soit un isomorphisme. (Comme le corps $k$ est de caract\'eristique z\'ero, un tel morphisme existe d'apr\`es le th\'eor\`eme d'Hironaka.) Soit $T\coloneqq p^{-1}(S) \subset Z$. Notons $E\coloneqq k(Y)=k(Z)$ et $E'\coloneqq L(Y)=L(Z)\cong E\otimes_kL$. Soit
\begin{equation}\label{application-phi}\Phi\colon  CH_{0}(Z_{E})/ N_{E'/E}(CH_{0}(Z_{E'})) \to CH_{0}(Y_{E})/N_{E'/E}(CH_{0}(Y_{E'}))
\end{equation}
l'application induite par le morphisme propre $p$.
\end{situation}

\begin{prop}\label{prop1} 

On se place dans la situation \ref{situation-phi}. Soit $G_k$ le groupe de Galois absolu de $k$ et soit $M$ un $G_k$-module discret de torsion. Supposons:

\begin{itemize} 
\item [(i)] L'application $\Phi$ de (\ref{application-phi}) est injective.
 \item [(ii)] L'application  \[\Ker[H^i(k,M) \to H^i(L,M)] \to \Ker [H^i_{\on{nr}}(k(Y)/k,M) \to H^i_{\on{nr}}(L(Y)/L,M)]\] n'est pas
 surjective.
\end{itemize} 
  Alors la diff\'erence entre le point g\'en\'erique de $Y$
  et le point $b_{k(Y)}$ d\'efinit une classe non nulle dans $A_{0}(Y_{k(Y)})
   \subset CH_{0}(Y_{k(Y)})$.
 \end{prop}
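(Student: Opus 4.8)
I want to show that the zero-cycle $z := \eta_Y - b_{k(Y)}$ of degree $0$ on $Y_{k(Y)}$ is nonzero in $CH_0(Y_{k(Y)})$. The strategy is the standard one from \cite[\S 1]{colliot2016hypersurfaces}, adapted to the present singular setting: transport the question from $Y$ up to the smooth model $Z$ via the morphism $\Phi$ of \eqref{application-phi}, and then detect nonvanishing on $Z$ by an unramified cohomology class. The first step is to reduce to showing that the image of $z$ (or rather its preimage) is nonzero \emph{modulo norms from $L$}. Concretely, using hypothesis (i), it suffices to produce a zero-cycle $\tilde z$ on $Z_{k(Y)}$ of degree $0$, lifting $z$ under $p_*$, whose class in $CH_0(Z_{k(Y)})/N_{E'/E}CH_0(Z_{E'})$ is nonzero; since $p$ restricts to an isomorphism over $U$ and $b\in U(k)$, the cycle $\tilde z := \eta_Z - b_{k(Z)}$ is the obvious candidate, and $p_*\tilde z = z$.

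Let me explain how hypotheses (i) and (ii) combine. Write $E = k(Y) = k(Z)$ and $E' = L(Y)$. If $z = 0$ in $CH_0(Y_{E})$, then $\tilde z$ maps to $0$ in the target of $\Phi$, hence by injectivity of $\Phi$ the class of $\tilde z$ vanishes in $CH_0(Z_E)/N_{E'/E}CH_0(Z_{E'})$; that is, $\tilde z = N_{E'/E}(w)$ for some $w \in CH_0(Z_{E'})$. Now I invoke the compatibility of unramified cohomology with the cycle class / Bloch--Ogus pairing: for a smooth proper variety $Z$ and a zero-cycle of degree $0$, pairing with an element of $H^i_{\on{nr}}(k(Z)/k, M)$ that is trivial on the base point $b$ produces a well-defined element of $H^i(E,M)$, and this construction kills cycles of the form $N_{E'/E}(w)$ by transferring it into $\ker[H^i(k,M)\to H^i(L,M)]$ — more precisely, a norm from $E'$ pairs into the image of $H^i(L,M)\to H^i(E,M)$, hence lands in the kernel of $H^i_{\on{nr}}(k(Y)/k,M)\to H^i_{\on{nr}}(L(Y)/L,M)$ after the identification with $A_0$-duality. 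The key identity is that for $\alpha\in H^i_{\on{nr}}(k(Z)/k,M)$ with $\alpha(b)=0$, evaluation on $\eta_Z - b_{k(Z)}$ recovers the image of $\alpha$ under $H^i_{\on{nr}}(k(Z)/k,M)\hookrightarrow H^i(k(Z),M)$, whereas evaluation on $N_{E'/E}(w)$ forces this image to lie in the span of classes coming from $L$. So if $z = 0$, every $\alpha \in H^i_{\on{nr}}(k(Y)/k,M)$ with $\alpha(b) = 0$ would already come from $\ker[H^i(k,M)\to H^i(L,M)]$, contradicting the non-surjectivity in (ii).

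I would organize the argument as follows. First, recall (or set up) the evaluation pairing $CH_0(Z_F) \times H^i_{\on{nr}}(F(Z)/F, M) \to H^i(F, M)$ for field extensions $F/k$, functorial in $F$, and the fact that on a point $P \in Z(F)$ it is given by specialization $\alpha \mapsto \alpha(P)$; this is where the hypothesis that $Z$ is smooth and proper is essential. Second, observe that for a finite extension $E'/E$ and $w\in CH_0(Z_{E'})$ one has the projection formula $\langle N_{E'/E}(w), \alpha\rangle_E = \on{Cor}_{E'/E}\langle w, \alpha|_{E'}\rangle$, so that $\langle N_{E'/E}(w),\alpha\rangle$ lies in the image of the corestriction from $E'$, hence its class in the relevant quotient relating $k(Y)$-cohomology to $L(Y)$-cohomology is controlled by $L$. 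Third, specialize: take $\alpha$ to be (a lift of) an element witnessing the non-surjectivity in (ii), normalized so that $\alpha(b) = 0$ by subtracting the constant $\alpha(b) \in H^i(k,M)$, and chase through the previous two points to derive a contradiction from $z = 0$.

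\textbf{Main obstacle.} The delicate point is the bookkeeping with the kernels and the norm quotients: one must verify carefully that pairing a norm cycle $N_{E'/E}(w)$ against an unramified class $\alpha$ with $\alpha(b)=0$ produces an element whose image in $H^i_{\on{nr}}(k(Y)/k,M)/\ker[\cdots\to H^i(k,M)]$ actually lies in the image of $\ker[H^i(k,M)\to H^i(L,M)]$ — i.e.\ that the diagram relating evaluation pairings over $E$ and over $E'$, the corestriction, and the restriction $H^i(k,M)\to H^i(L,M)$ commutes as needed. This requires that the unramified subgroup is stable under corestriction (which it is, being defined by vanishing of residues) and a compatibility between the geometric norm $N_{E'/E}$ on Chow groups and the Galois-cohomological corestriction under the evaluation pairing. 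Once these compatibilities are in hand, the contradiction with (ii) is immediate, and the passage from $Y$ to $Z$ is exactly what injectivity of $\Phi$ in (i) is designed to supply.
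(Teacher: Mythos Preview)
Your approach is essentially the same as the paper's: lift the difference cycle to $\tilde z=\eta_Z-b'_{k(Z)}$ on the smooth model $Z$, take an unramified class $\alpha$ in $\Ker[H^i_{\on{nr}}(E/k,M)\to H^i_{\on{nr}}(E'/L,M)]$ witnessing (ii), normalized so that $\alpha(b')=0$, and then pair. The paper streamlines the last step by invoking Merkurjev's pairing \cite[Corollary 2.9]{merkurjev2008unramified}, which is already defined on the quotient $CH_0(Z_E)/N_{E'/E}(CH_0(Z_{E'}))$; this makes the projection-formula bookkeeping you flag as the ``main obstacle'' disappear, since for $\alpha$ in the kernel one has $\alpha|_{E'}=0$ and hence $\langle N_{E'/E}(w),\alpha\rangle=\on{Cor}_{E'/E}\langle w,0\rangle=0$ directly. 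Your phrasing that a norm pairs ``into the image of $\Ker[H^i(k,M)\to H^i(L,M)]$'' is a bit off --- it actually pairs to zero --- but the conclusion is the same and the argument is correct.
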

 
\begin{proof}
L'application $\Phi$ envoie la classe du point g\'en\'erique de
$Z$ (vu comme $E$-point de $Z_E$) sur celle du point g\'en\'erique de $Y$ (vu comme $E$-point de $Y_E$). Grâce \`a l'hypoth\`ese (ii), il existe $\alpha \in \Ker [H^i_{\on{nr}}(E/k,M) \to H^i_{\on{nr}}(E'/L,M)]$ non nul, mais nul au point $b'$ image inverse de $b$ par $p\colon  Z \to Y$. D'apr\`es Merkurjev \cite[Corollary 2.9]{merkurjev2008unramified} (cf. \cite[Corollary 5.2]{schreieder2021unramified}), on dispose 
de  l'accouplement
\[CH_{0}(Z_{E})/ N_{E'/E}(CH_{0}(Z_{E'})) \times  \Ker [H^i_{\on{nr}}(E/k,M) \to H^i_{\on{nr}}(E'/L,M)] \to H^i(E,M)\]
qui associe au point g\'en\'erique de $Z$ (respectivement, au point $b'_{E}\coloneqq b'\times_k E$) et \`a la classe $\alpha$
l'\'el\'ement $\alpha \in H^{i}(E,M)$ (respectivement, z\'ero, par le choix de la classe $\alpha$). Ainsi la diff\'erence entre le point g\'en\'erique de $Z$ et le point constant $b'_E$
  d\'efinit une classe non triviale dans $CH_{0}(Z_{E})/ N_{E'/E}(CH_{0}(Z_{E'}))$. D'apr\`es l'hypoth\`ese (i), on en d\'eduit que
 la diff\'erence entre le  point g\'en\'erique de $Y$ (vu comme $E$-point de $Y_E$) et le point $b_{E}\coloneqq b\times_{k}E$
d\'efinit une classe de degr\'e z\'ero qui est non nulle dans $CH_{0}(Y_{E})/ N_{E'/E}(CH_{0}(Y_{E'}))$,
et a fortiori dans $CH_{0}(Y_{E})$.
\end{proof}

Le th\'eor\`eme suivant est dans l'esprit de \cite[Th\'eor\`eme 1.14]{colliot2016hypersurfaces}.

\begin{thm}\label{1.14K} 
On se place dans la situation \ref{situation-phi}. Soit  $A$ un anneau de valuation discr\`ete de corps r\'esiduel $k$, soit $K$ son corps des fractions, soit $\pi \colon  { \mathcal{X}} \to  \Spec(A)$ un $A$-sch\'ema projectif, fid\`element plat, \`a fibres g\'eom\'etriquement int\`egres, tel que $\mc{X}\times_Ak\cong Y$, soit $X$ la fibre g\'en\'erique de $\pi$ et soit $\sigma$ une section de $\pi$ qui induit un $k$-point lisse $b$ sur $Y$. Soit $M$ un $G_k$-module discret, de torsion, et soit $i\geq 0$ un entier. Supposons:
\begin{itemize} 
\item [(i)] L'application $\Phi$ de (\ref{application-phi}) est injective.
 \item [(ii)] L'application  \[\Ker[H^i(k,M) \to H^i(L,M)] \to \Ker [H^i_{\on{nr}}(k(Y)/k,M) \to H^i_{\on{nr}}(L(Y)/L,M)]\] n'est pas
 surjective.
\end{itemize} 
 Alors le groupe $A_{0}(X\times_{K}K(X))$ est non nul, en d'autres termes,
 la $K$-vari\'et\'e $X$ n'est pas $CH_{0}$-triviale.
 \end{thm}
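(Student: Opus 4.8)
The plan is to combine Proposition~\ref{prop1} with the specialization homomorphism on Chow groups of zero-cycles attached to a proper flat family over a discrete valuation ring; I will produce an explicit nonzero element of $A_0(X\times_K K(X))$. First I would observe that, since $\pi$ is faithfully plat over the discrete valuation ring $A$ and the generic fiber $X$ is geometrically integral, the total space $\mathcal{X}$ is integral, so that the function field $k(\mathcal{X})$ equals $K(X)$. Because the special fiber $Y=\mathcal{X}\times_A k$ is integral, its generic point $\eta$ has codimension one in $\mathcal{X}$, and I would check that the local ring $B\coloneqq\mathcal{O}_{\mathcal{X},\eta}$ is a discrete valuation ring with fraction field $K(X)$ and residue field $k(Y)$: it is a Noetherian local domain, and since $B/\mathfrak{m}_A B=\mathcal{O}_{Y,\eta}=k(Y)$ is a field, the maximal ideal of $B$ is principal, generated by the image of a uniformizer of $A$, which is a nonzerodivisor by the flatness of $\pi$ and is not a unit. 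No regularity hypothesis on $\mathcal{X}$ is used here: only the flatness of $\pi$ and the integrality of the special fiber $Y$.

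Next I would base change $\pi$ along $\Spec B\to\Spec A$, obtaining a proper, faithfully plat morphism $\mathcal{X}_B\coloneqq\mathcal{X}\times_A B\to\Spec B$ with integral total space, whose generic fiber is $X\times_K K(X)$ and whose special fiber is $Y\times_k k(Y)$. Since $\mathcal{X}_B$ is flat over $B$, the special fiber is an effective Cartier divisor on $\mathcal{X}_B$, and intersecting with it (take the closure of a zero-cycle of the generic fiber and apply the Gysin map for this Cartier divisor) defines the specialization homomorphism
\[\on{sp}\colon CH_0\bigl(X\times_K K(X)\bigr)\longrightarrow CH_0\bigl(Y\times_k k(Y)\bigr),\]
which preserves degrees, by flatness of the closures over $\Spec B$, and hence restricts to $A_0\to A_0$; this is the variant of the specialization map used in \cite[\S 1]{colliot2016hypersurfaces}. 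Now the section $\sigma$ base changes to a section of $\mathcal{X}_B\to\Spec B$ whose image is isomorphic to $\Spec B$ and meets the special fiber in the constant point $b_{k(Y)}\coloneqq b\times_k k(Y)$ with multiplicity one; hence $\on{sp}$ carries $[b_{K(X)}]$ to $[b_{k(Y)}]$, where $b_{K(X)}\in X(K(X))$ denotes the point induced by $\sigma$. Likewise, the localization morphism $\Spec B\to\mathcal{X}$ together with the structure morphism $\Spec B\to\Spec A$ defines a section of $\mathcal{X}_B\to\Spec B$ whose generic point is the generic point of $X$, viewed as a $K(X)$-point of $X\times_K K(X)$, and whose special point is the generic point of $Y$, viewed as a $k(Y)$-point of $Y\times_k k(Y)$; consequently $\on{sp}$ carries the class of the generic point of $X$ to the class of the generic point of $Y$. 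Therefore $\on{sp}$ sends the difference between the generic point of $X$ and $b_{K(X)}$ to the difference between the generic point of $Y$ and $b_{k(Y)}$.

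To conclude, I would invoke Proposition~\ref{prop1}: its hypotheses are exactly hypotheses (i) and (ii) of the theorem, taken for the same $Y$, $S$, $U$, $b$, $Z$, $p$ as in Situation~\ref{situation-phi}, and it asserts that the difference between the generic point of $Y$ and $b_{k(Y)}$ is a nonzero class in $CH_0(Y\times_k k(Y))$. Since $\on{sp}$ is a well-defined homomorphism, the difference between the generic point of $X$ and $b_{K(X)}$ must then be nonzero in $CH_0(X\times_K K(X))$; being of degree zero, it is a nonzero element of $A_0(X\times_K K(X))$, which is the assertion of the theorem. I expect the points needing the most care to be the verification that $B=\mathcal{O}_{\mathcal{X},\eta}$ is a discrete valuation ring in the absence of regularity of $\mathcal{X}$ — this is precisely where the flatness of $\pi$ and the integrality of the special fiber are genuinely used — together with the construction of $\on{sp}$ and the identification of the images of the two tautological classes, since here the total space $\mathcal{X}_B$ need not be smooth or regular; once these are settled, the rest is formal given Proposition~\ref{prop1}.
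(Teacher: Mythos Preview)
Your proposal is correct and follows essentially the same route as the paper's proof: both pass to the discrete valuation ring $B=\mathcal{O}_{\mathcal{X},\eta}$ at the generic point of the special fiber, base change $\mathcal{X}$ to $B$, apply the specialization homomorphism on $CH_0$ (tracking the generic point and the section), and conclude via Proposition~\ref{prop1}. Your write-up is in fact more explicit than the paper's on the verification that $B$ is a DVR and on the construction of $\on{sp}$, but the architecture is identical.
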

 
\begin{proof}
On suit la strat\'egie de la preuve de \cite[Th\'eor\`eme 1.12]{colliot2016hypersurfaces}. Soit $\eta$ le point g\'en\'erique  de $Y$ et soit $B\coloneqq O_{\mc{X},\eta}$ son anneau local, qui est de dimension $1$. Comme la fibre $Y$ est g\'eom\'etriquement int\`egre, l'id\'eal maximal de $B$ est engendr\'e par l'image $\nu$ d'une uniformisante de $A$. Comme $\pi$ est fid\`element plat, $\nu$ n'est pas diviseur de z\'ero, et donc $B$ est un anneau de valuation discr\`ete de corps des fractions le corps des fonctions $K(X)$ de $X$ et de corps r\'esiduel le corps des fonctions $k(Y)$ de $Y$. On consid\`ere le $B$-sch\'ema ${\mathcal X}\times_{A}B$. Sa fibre g\'en\'erique est $X\times_{K(X)}$ et sa fibre sp\'eciale est $Y_{k(Y)}$.

On a un homomorphisme de sp\'ecialisation
$CH_{0}(X\times_{K(X)}) \to CH_{0}(Y_{k(Y)})$
qui envoie le point g\'en\'erique de $X$ sur le point g\'en\'erique de $Y$ et le point $\sigma(K)_{K(X)}$ sur $b_{k(Y)}$; voir \cite{fulton1975rational} et \cite[\S 20.3]{fulton1998intersection}.

D'apr\`es la proposition \ref{prop1},  la classe du point g\'en\'erique de $Y$ 
dans $CH_{0}(Y_{k(Y)})$ n'est pas 
\'egale \`a celle de $b_{k(Y)} $,
et donc $A_{0}(X\times_{K(X)}) \neq 0$. 
\end{proof}

On donne  une condition suffisante pour l'injectivit\'e de $\Phi$ qui suffira pour tous les exemples de la section \ref{paraex}. Une autre condition suffisante sera donn\'ee dans le lemme \ref{lemme-phi-isomorphisme-variante} et sera utilis\'ee pour les exemples de la section \ref{dimsup}.

\begin{lemma}\label{lemme-phi-isomorphisme}
    On se place dans la situation \ref{situation-phi}. Si le morphisme $S \to \Spec(k)$ se factorise par $\Spec(L) \to Spec(k)$, alors l'application $\Phi$ est un isomorphisme.
\end{lemma}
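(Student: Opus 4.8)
The plan is to show that under the hypothesis that $S$ is a $k$-scheme which admits an $L$-point structure (i.e.\ the structure morphism $S \to \Spec(k)$ factors through $\Spec(L)$), the map $\Phi$ of \eqref{application-phi} is bijective. Since $\Phi$ is induced by the proper birational morphism $p \colon Z \to Y$ which restricts to an isomorphism over $U = Y \setminus S$, the obstruction to $\Phi$ being an isomorphism is entirely concentrated on the exceptional locus $T = p^{-1}(S)$. First I would recall the localization exact sequences for Chow groups of zero-cycles: for a proper $k$-variety with closed subscheme, $CH_0$ of the closed part surjects onto the kernel of the restriction to the open complement. Applied over the field $E = k(Y) = k(Z)$, this gives a commutative diagram with exact rows
\begin{equation*}
\begin{tikzcd}
CH_0(T_E) \arrow[r] \arrow[d,"p_*"] & CH_0(Z_E) \arrow[r] \arrow[d,"p_*"] & CH_0(U_E) \arrow[r] \arrow[d,"\wr"] & 0 \\
CH_0(S_E) \arrow[r] & CH_0(Y_E) \arrow[r] & CH_0(U_E) \arrow[r] & 0,
\end{tikzcd}
\end{equation*}
and similarly over $E' = L(Y) = L(Z)$; the rightmost vertical arrows are isomorphisms because $p$ restricts to an isomorphism over $U$.

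The key step is then to pass to the quotients by the images of the norm maps $N_{E'/E}$ and invoke the hypothesis on $S$. Because the structure morphism $S \to \Spec(k)$ factors through $\Spec(L)$, the same holds for $T \to \Spec(k)$: indeed $T = p^{-1}(S) = Z \times_Y S$, and base-changing the factorization $S \to \Spec(L) \to \Spec(k)$ shows $T$ is an $L$-scheme. Consequently, for any closed point $x$ of $T_E$ (resp.\ $S_E$), the residue field $E(x)$ contains $E' = E \otimes_k L$, so the zero-cycle $[x]$ on $T_E$ (resp.\ $S_E$) is a norm from $T_{E'}$ (resp.\ $S_{E'}$): explicitly $[x] = N_{E'/E}([x']) $ where $x'$ is the corresponding point of $T_{E'}$, using that $E(x) = E'(x')$ and the projection formula. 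Hence $CH_0(T_E) = N_{E'/E}(CH_0(T_{E'}))$ and $CH_0(S_E) = N_{E'/E}(CH_0(S_{E'}))$, so these groups die in the relevant quotients.

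Therefore, applying the right-exact functor "quotient by the image of $N_{E'/E}$" to the two localization sequences — here one uses that $N_{E'/E}$ is functorial for proper pushforward and for the localization boundary maps, so the norm subgroups form a sub-complex — the leftmost terms vanish and the diagram becomes
\begin{equation*}
\begin{tikzcd}
0 \arrow[r] & CH_0(Z_E)/N_{E'/E} CH_0(Z_{E'}) \arrow[r] \arrow[d,"\Phi"] & CH_0(U_E)/N_{E'/E} CH_0(U_{E'}) \arrow[d,"\wr"] \arrow[r] & 0 \\
0 \arrow[r] & CH_0(Y_E)/N_{E'/E} CH_0(Y_{E'}) \arrow[r] & CH_0(U_E)/N_{E'/E} CH_0(U_{E'}) \arrow[r] & 0,
\end{tikzcd}
\end{equation*}
which forces $\Phi$ to be an isomorphism. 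The main obstacle in writing this cleanly is the bookkeeping for the norm maps: one must check that $N_{E'/E}$ is compatible with the localization exact sequence (pushforward along the closed immersion and restriction to the open part), which follows from the compatibility of proper pushforward with flat pullback and base change, but deserves to be spelled out. A secondary point to verify carefully is that, since $E'/E$ is a finite extension of degree $[L:k]$ (the tensor product $E \otimes_k L$ being a field here, as $k(Y)$ is a regular — hence separable — extension and one may assume $L/k$ finite for the norm to make sense, or interpret the norm appropriately in the infinite case), every closed point of the $L$-scheme $S_E$ genuinely has residue field containing $E'$.
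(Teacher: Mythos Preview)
Your proof is correct and follows essentially the same strategy as the paper: set up the localization sequences for $Z_E$ and $Y_E$ (and their $E'$-analogues), show that the norm maps $N_{E'/E}$ on $CH_0(T_E)$ and $CH_0(S_E)$ are surjective using the $L$-structure on $S$ (and hence on $T$), and conclude by a diagram chase. The only cosmetic difference is that the paper argues surjectivity of the norm by observing that the diagonal $L\to L\otimes_kL$ is split, so $S_{E'}\to S_E$ admits a section, whereas you argue point-by-point that every closed point of $S_E$ has residue field containing $E'$; these are equivalent formulations of the same fact.
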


\begin{proof}
 On a un diagramme commutatif 
\[
\begin{tikzcd}[row sep=2em, column sep=3.5em]
CH_{0}(T_{E'}) \arrow[r] \arrow[d,->>,"N_{E'/E}"] \arrow[ddd, bend left=66,"p_*"] & 
CH_{0}(Z_{E'}) \arrow[r] \arrow[d,"N_{E'/E}"] \arrow[ddd, bend left=66,"p_*"] & 
CH_{0}(p^{-1}(U)_{E'}) \arrow[r] \arrow[d,"N_{E'/E}"] \arrow[ddd, bend left=66,"\wr"', "p_*"] & 
0 \\
CH_{0}(T_{E}) \arrow[r] \arrow[d,"p_*"] & 
CH_{0}(Z_{E}) \arrow[r] \arrow[d,"p_*"] & 
CH_{0}(p^{-1}(U)_{E}) \arrow[r] \arrow[d,"\wr"', "p_*"] & 
0 \\
CH_{0}(S_{E}) \arrow[r] & 
CH_{0}(Y_{E}) \arrow[r] & 
CH_{0}(U_{E}) \arrow[r] & 
0 \\
CH_{0}(S_{E'}) \arrow[r] \arrow[u,->>,swap,"N_{E'/E}"] & 
CH_{0}(Y_{E'}) \arrow[r] \arrow[u,swap,"N_{E'/E}"] & 
CH_{0}(U_{E'}) \arrow[r] \arrow[u,swap,"N_{E'/E}"] & 
0
\end{tikzcd}
\]
o\`u les suites exactes horizontales proviennent de la suite de localisation, et o\`u on \'ecrit $N_{E'/E}$ pour les applications normes.
 
 Par hypoth\`ese, le $k$-sch\'ema $S$ admet une structure de $L$-sch\'ema. Comme le corps $k$ est de caract\'eristique z\'ero, l'inclusion diagonale $L\to L\otimes_kL$ est scind\'ee, et donc la projection de $L$-sch\'emas $S\times_kL=S\times_L(L\otimes_kL)\to S$ admet une section. Ainsi la fl\`eche norme $N_{E'/E}\colon CH_{0}(S_{E'}) \to CH_{0}(S_{E})$ est surjective. Comme $T$ admet un morphisme vers $S$, il admet aussi une structure de $L$-sch\'ema, et le même argument montre que la fl\`eche norme $N_{E'/E}\colon CH_{0}(T_{E'}) \to CH_{0}(T_{E})$ est surjective. 
 On d\'eduit alors du diagramme ci-dessus que l'application $\Phi$ est un isomorphisme.
\end{proof}

\section{Vari\'et\'es projectives et lisses non stablement rationnelles sur le corps \texorpdfstring{$R$}{R} des s\'eries de Puiseux r\'eelles dont l'espace des \texorpdfstring{$R$}{R}-points est semi-alg\'ebriquement connexe}
\label{paraex}

\begin{thm}\label{generateur}
Soit $Y$ une $\R$-vari\'et\'e projective et g\'eom\'etriquement int\`egre.  Soit $S\subset Y$ le lieu singulier de $Y$, avec structure r\'eduite. Supposons:
\begin{enumerate}
    \item[(a)] L'ensemble $Y(\R)$ des points r\'eels est contenu dans le lieu lisse de $Y$.
    \item[(b)] L'espace topologique $Y(\R)$ est connexe.
    \item[(c)] Le morphisme $S\to \on{Spec}(\R)$ se factorise par $\Spec(\C)\to\Spec(\R)$. 
    \item[(d)] L'application
\[\Br(\R) \to \Ker [\Br_{\on{nr}}(\R(Y)/\R) \to \Br_{\on{nr}}(\C(Y)/\C)]\] n'est pas
 surjective.
\end{enumerate}

Soit $\A^1_\R=\Spec( \R[t])$, soit $U\subset \A^1_\R$ un voisinage Zariski de $0$, soit $f\colon \mc{X}\to U$ un morphisme projectif de $\R$-vari\'et\'es, \`a fibre g\'en\'erique lisse, tel que $\mc{X}|_{t=0}\cong Y$. Alors, si $\eta\colon \Spec(\puiseux)\to \A^1_\R$ est le morphisme correspondant \`a l'inclusion $\R[t]\subset R$, la $\puiseux$-vari\'et\'e projective lisse $X\coloneqq \mc{X}\times_{U,\eta}\Spec(\puiseux)$ satisfait les propri\'et\'es suivantes.

\begin{enumerate}
    \item[(i)] L'espace semi-alg\'ebrique $X(\puiseux)$ est semi-alg\'ebriquement connexe.
    \item[(ii)] La $\puiseux$-vari\'et\'e $X$ n'est pas $CH_0$-triviale, et en particulier
n'est pas stablement rationnelle.
\end{enumerate} 
\end{thm}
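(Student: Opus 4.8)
The strategy is to verify the two conclusions separately, each by invoking a general result already established in the excerpt applied to the family $f\colon\mc{X}\to U$ and its degeneration $Y$ at $t=0$.

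For part (i), the plan is to apply Theorem~\ref{critere-connexe} to the morphism $f\colon\mc{X}\to U$. First I would check the hypotheses: $f$ is projective with smooth generic fibre by assumption; it is faithfully flat after possibly shrinking $U$ (the generic fibre is geometrically integral of the expected dimension, so flatness over the one-dimensional base $U$ follows, and surjectivity is automatic since $0\in U$); the special fibre $\mc{X}|_{t=0}\cong Y$ has $Y(\R)$ nonempty because by~(b) it is connected, hence in particular nonempty, and by~(a) it lies in the smooth locus of $Y$, hence in the smooth locus of $X_0$. Theorem~\ref{critere-connexe} then gives that the number of semi-algebraically connected components of $X_\eta(\puiseux)$ equals that of $X_0(\R)=Y(\R)$. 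Since by~(b) the space $Y(\R)$ is topologically connected, and for a real algebraic variety over $\R$ the topological connected components coincide with the semi-algebraically connected ones \cite[Th\'eor\`eme 2.4.5]{bochnak1987geometrie}, this number is $1$, so $X(\puiseux)=X_\eta(\puiseux)$ is semi-algebraically connected.

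For part (ii), the plan is to apply Theorem~\ref{1.14K} with $L=\C$, $k=\R$, the $G_\R$-module $M=\Q/\Z(1)$, the degree $i=2$, the discrete valuation ring $A=\R[t]_{(t)}$ (so $K=\R(t)$ and the residue field is $\R$), and $\mc{X}$ replaced by $\mc{X}\times_U\Spec(A)$. The section $\sigma$ can be taken to be the constant section through a chosen real point $b\in Y(\R)$ lying in the smooth locus, which exists by~(a)--(b); after a further shrinking of $U$ one arranges that such a section extends over $A$. The hypotheses of Theorem~\ref{1.14K} are: (i) injectivity of $\Phi$, which follows from Lemma~\ref{lemme-phi-isomorphisme} because by~(c) the singular locus $S$ of $Y$ is a $\C$-scheme, so $S\to\Spec(\R)$ factors through $\Spec(\C)$ and $\Phi$ is in fact an isomorphism; and (ii) non-surjectivity of the map $\Ker[H^2(\R,M)\to H^2(\C,M)]\to\Ker[H^2_{\on{nr}}(\R(Y)/\R,M)\to H^2_{\on{nr}}(\C(Y)/\C,M)]$, which is precisely hypothesis~(d), using the identification $H^2_{\on{nr}}(F(Y)/F,\Q/\Z(1))=\Br_{\on{nr}}(F(Y)/F)$ recalled in the introduction for $F=\R$ and $F=\C$, both of characteristic zero. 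Theorem~\ref{1.14K} then yields $A_0(X\times_K K(X))\neq 0$, so $X$ is not $CH_0$-trivial over $K=\R(t)$; base-changing along $K\subset\puiseux$ (which preserves non-$CH_0$-triviality, as $CH_0$-triviality is a universal condition) shows $X=\mc{X}\times_U\Spec(\puiseux)$ is not $CH_0$-trivial over $\puiseux$, hence not retract rational and a fortiori not stably rational.

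The main obstacle is a bookkeeping one rather than a conceptual one: one must ensure that all the shrinkings of $U$ can be performed simultaneously, so that over a single open $U\ni 0$ the morphism $f$ is faithfully flat with geometrically integral fibres, admits the section through $b$, and has the smooth generic fibre. This compatibility, together with the verification that $\mc{X}\times_U\Spec(A)$ really satisfies the running hypotheses of Situation~\ref{situation-phi} (in particular that $Y$ is geometrically integral, which should be imposed as part of the construction in \S\ref{paraex}), is the only point requiring care; everything else is a direct citation of Theorems~\ref{critere-connexe} and~\ref{1.14K} and Lemma~\ref{lemme-phi-isomorphisme}.
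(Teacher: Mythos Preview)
Your argument for (i) is essentially the paper's own. The gap lies in your proof of (ii), in the last step: you prove that the $\R(t)$-variety $\mc{X}\times_U\Spec(\R(t))$ is not $CH_0$-trivial and then assert that ``base-changing along $K\subset\puiseux$ preserves non-$CH_0$-triviality, as $CH_0$-triviality is a universal condition''. This implication goes the wrong way. Precisely because $CH_0$-triviality is a universal condition, it is \emph{preserved} under field extension, so it is non-$CH_0$-triviality that can be \emph{lost} under field extension (think of a non-rational variety that becomes rational over the algebraic closure). Knowing $A_0(X_{\R(t)(X)})\neq 0$ does not give a witness field containing $\puiseux$, and hence says nothing about $CH_0$-triviality of $X$ over $\puiseux$.

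The paper repairs this by arguing contrapositively: if $X/\puiseux$ were $CH_0$-trivial, the decomposition of the diagonal witnessing this is defined over some finitely generated subextension, hence over $\R(\!(t^{1/n})\!)$ for a suitable $n$ \cite[Proposition 1.4]{colliot2016hypersurfaces}. One must therefore apply Theorem~\ref{1.14K} not with $A=\R[t]_{(t)}$ but with $A=\R[\![t^{1/n}]\!]$ for \emph{every} $n\geq 1$, each time obtaining that $\mc{X}\times_U\R(\!(t^{1/n})\!)$ is not $CH_0$-trivial. Working over the complete (hence Henselian) ring $\R[\![t^{1/n}]\!]$ also makes the existence of the section $\sigma$ through a smooth point $b\in Y(\R)$ immediate via Hensel's lemma, whereas over the Zariski localisation $\R[t]_{(t)}$ this lifting would itself require further justification.
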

 
\begin{proof}
(i) Ceci suit du th\'eor\`eme \ref{critere-connexe} et des propri\'et\'es (a) et (b).

(ii) Si la $\puiseux$-vari\'et\'e $X$ est $CH_0$-triviale,
alors il existe un entier $n>0$ tel que la vari\'et\'e 
$\mc{X}\times_U \R(\!(t^{1/n})\!)$ sur le corps $\R(\!(t^{1/n})\!)$
soit $CH_0$-triviale. Ceci r\'esulte de la forme ``d\'ecomposition de la diagonale''
de la $CH_0$-trivialit\'e \cite[Proposition 1.4]{colliot2016hypersurfaces}. Pour montrer (ii), il suffit donc de prouver que pour tout $n\geq 1$ la vari\'et\'e 
$\mc{X}\times_U \R(\!(t^{1/n})\!)$ n'est pas $CH_0$-triviale. 

Soit $n\geq 1$ un entier, soit $A\coloneqq \R[\![t^{1/n}]\!]$ et soit $\mc{X}'\coloneqq \mc{X}\times_U A$, o\`u le produit fibr\'e est pris par rapport au morphisme \'evident $\Spec(A)\to U$. La fibre sp\'eciale du morphisme naturel $\pi\colon \mc{X}'\to \Spec(A)$ est isomorphe \`a $Y$. D'apr\`es (a) et (b), il existe un $\R$-point lisse $y\in Y(\R)$, qui se rel\`eve en une section $\sigma$ de $\pi$ d'apr\`es le lemme de Hensel. Ceci, les propri\'et\'es (c) et (d) et le lemme \ref{lemme-phi-isomorphisme} nous garantissent que $\pi$ satisfait les hypoth\`eses (i) et (ii) (avec $i=2$ et $M=\mu_2$) du th\'eor\`eme \ref{1.14K}, ce qui entraîne que la fibre g\'en\'erique $(\mc{X}')_{\R(\!(t^{1/n})\!)}\cong\mc{X}_{\R(\!(t^{1/n})\!)}$ n'est pas $CH_0$-triviale, comme voulu. 
\end{proof}
 
Ce th\'eor\`eme va nous permettre de construire des exemples projectifs et lisses $X$ sur $\puiseux$ en partant d'exemples projectifs singuliers $Y$ sur $\R$. Les trois premiers exemples singuliers avaient \'et\'e construits dans \cite{colliot2024certaines}.

\subsection{Fibrations en surfaces quadriques sur la droite projective dont toutes les fibres g\'eom\'etriques sont int\`egres}\label{contreexemple}

Soit $\P^1_\R=\on{Proj}(\R[u_0,u_1])$, vu comme recollement de $\A^1_\R$ avec coordonn\'ee $u\coloneqq u_0/u_1$ et de $\A^1_\R$ avec coordonn\'ee $v\coloneqq u_0/u_1$ au moyen du changement de variable $u=1/v$ sur $\P^1_\R\setminus\{0,\infty\}$. Soit $Y$ le recollement de la sous-vari\'et\'e de $\P^3_{\R} \times_{\R} \A^1_{\R}$ avec coordonn\'ees $(a,b,c,d;u)$ donn\'ee par l'\'equation
\[a^2+(1+u^2) b^2 - u (c^2+d^2)=0.\]
  avec la sous-vari\'et\'e de $\P^3_{\R} \times_{\R} \A^1_{\R}$ avec coordonn\'ees  $(a',b',c',d'; v)$ donn\'ee par l'\'e\-qua\-tion
\[a'^2+(1+v^2)b'^2-v(c'^2+d'^2)=0\]
  au moyen du  changement de variable 
 \[(a',b',c',d';v)=(a/u, b, c, d; 1/u).\] On a un morphisme surjectif $\pi\colon Y \to \P^1_\R$ donn\'e par $(u_0:u_1)$.
  
  \begin{lemma}\label{refX}
  Soit $\pi \colon  Y \to \P^1_{\R}$ la fibration en surfaces quadriques d\'efinie ci-dessus. Alors $Y$ satisfait les propri\'et\'es (a)-(d) du th\'eor\`eme \ref{generateur}. 
 \end{lemma}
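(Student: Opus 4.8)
Le plan est de v\'erifier les hypoth\`eses (a)--(d) du th\'eor\`eme~\ref{generateur} en analysant la fibration en surfaces quadriques $\pi\colon Y\to\P^1_\R$, de fibre g\'en\'erique la quadrique lisse $Q\subset\P^3_{\R(u)}$ de la forme $q_u\coloneqq\langle 1,\,1+u^2,\,-u,\,-u\rangle$ (on pose $F=a^2+(1+u^2)b^2-u(c^2+d^2)$). Le discriminant $u^2(1+u^2)\equiv 1+u^2$ n'est pas un carr\'e dans $\R(u)$, ce qui sera le point d\'ecisif pour~(d), et comme $q_u$ est de rang~$4$ sur $\C(u)$ la $\R$-vari\'et\'e $Y$ est g\'eom\'etriquement int\`egre. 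Pour \'etablir~(a) et~(c), je d\'eterminerais le lieu singulier $S$ de $Y$: un point singulier est situ\'e au-dessus d'un point de $\P^1_\R$ de fibre singuli\`ere, donc au-dessus de $u\in\{0,\pm i,\infty\}$. Au-dessus de $u=\pm i$ la fibre est un c\^one de sommet $(0:1:0:0)$, mais en ce point $\partial_u F=\pm 2i\neq 0$, de sorte que $Y$ y est lisse; au-dessus de $u=0$, et de m\^eme au-dessus de $u=\infty$ dans la carte $v=1/u$, la fibre $\{a^2+b^2=0\}$ est singuli\`ere le long de la droite $\{a=b=0\}$, o\`u $\partial_u F=-(c^2+d^2)$ ne s'annule qu'aux deux points $\C$-rationnels $(0:0:1:\pm i)$. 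Ainsi $S$ est la r\'eunion de deux points ferm\'es de $Y$ (l'un au-dessus de $0$, l'autre au-dessus de $\infty$), de corps r\'esiduel $\C$; d'o\`u $S(\R)=\emptyset$, soit~(a), et la factorisation $S\to\Spec(\C)\to\Spec(\R)$, soit~(c).

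Pour~(b), l'id\'ee est d'exploiter la propret\'e de $\pi$ et la remarque~\ref{fibres-connexes}. La forme $q_{u_0}$ est d\'efinie positive pour $u_0<0$, de signature $(2,2)$ pour $u_0>0$, et pour $u_0\in\{0,\infty\}$ le lieu r\'eel de la fibre est la droite $\{a=b=0\}\cong\P^1(\R)$; comme une quadrique r\'eelle lisse de signature $(2,2)$ dans $\P^3$ a un lieu r\'eel non vide et connexe, l'application semi-alg\'ebrique $\pi(\R)$ est ferm\'ee (car $\pi$ est propre) et son image $\pi(Y(\R))$ est le ferm\'e semi-alg\'ebrique $\{u\geq 0\}\cup\{\infty\}\subset\P^1(\R)$, qui est un arc, donc semi-alg\'ebriquement connexe. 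L'application $Y(\R)\to\pi(Y(\R))$ induite par $\pi(\R)$ est alors ferm\'ee, surjective, \`a fibres non vides et semi-alg\'ebriquement connexes, et la remarque~\ref{fibres-connexes} donne que $Y(\R)$ est semi-alg\'ebriquement connexe.

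Le c\oe ur de la preuve est~(d). Je proposerais comme t\'emoin la classe de quaternions $\alpha\coloneqq(-1,u)\in\Br(\R(u))[2]$, vue dans $\Br(\R(Y))$ via $\pi$; il s'agit de montrer que $\alpha$ appartient \`a $\Ker[\Br_{\on{nr}}(\R(Y)/\R)\to\Br_{\on{nr}}(\C(Y)/\C)]$ sans appartenir \`a l'image de $\Br(\R)$. Premi\`erement, $\alpha$ est non ramifi\'ee: en caract\'eristique nulle $\Br_{\on{nr}}(\R(Y)/\R)=\Br(\widetilde Y)$ pour toute r\'esolution $\widetilde Y\to Y$ isomorphe au-dessus de $Y\setminus S$, et par puret\'e il suffit de voir que $\partial_D\alpha=0$ pour tout diviseur premier $D$ de $\widetilde Y$; comme $v_D(-1)=0$, on a $\partial_D\alpha=[(-1)^{v_D(u)}]\in\kappa(D)^*/2$, qui est nul sauf peut-\^etre le long des transform\'es stricts des fibres en $u=0$ et $u=\infty$ et des diviseurs exceptionnels de $\widetilde Y\to Y$; mais dans le premier cas $\kappa(D)$ est le corps des fonctions de $\{a^2+b^2=0\}\subset\P^3_\R$, o\`u $-1=(a/b)^2$ est un carr\'e, et dans le second $\kappa(D)$ contient $\C$ (puisque $D$ domine un point de $S$, de corps r\'esiduel $\C$), donc encore $-1$ y est un carr\'e. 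Deuxi\`emement, $\alpha$ s'annule d\'ej\`a dans $\Br(\C(Y))$, car $-1$ est un carr\'e dans $\C$; donc $\alpha$ est bien dans ce noyau. Troisi\`emement, $\alpha$ n'est pas dans l'image de $\Br(\R)\to\Br(\R(Y))$, ce qui revient \`a $(-1,u)\neq 0$ et $(-1,-u)\neq 0$ dans $\Br(\R(Y))$: pour $\varepsilon=\pm 1$, la forme de Pfister $\langle\langle -1,\varepsilon u\rangle\rangle$, anisotrope sur $\R(u)$ puisque $(-1,\varepsilon u)$ y est ramifi\'ee en $u=0$, ne deviendrait hyperbolique sur $\R(Y)=\R(u)(Q)$ que si $q_u$ lui \'etait semblable (th\'eor\`eme de sous-forme pour les corps de fonctions de quadriques, \cite[Chapter X]{lam2005introduction}), ce qu'exclut la comparaison des discriminants, $1+u^2$ n'\'etant pas un carr\'e dans $\R(u)$. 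Combinant ces trois points, on obtient~(d).

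L'obstacle principal sera le troisi\`eme point, coupl\'e au contr\^ole des diviseurs exceptionnels dans le premier: il faut \`a la fois l'apport de la th\'eorie des formes quadratiques (anisotropie de $q_u$ et de $\langle\langle -1,\varepsilon u\rangle\rangle$ sur $\R(u)$, th\'eor\`eme de sous-forme) et le fait -- garanti exactement par la propri\'et\'e~(c) -- que la r\'esolution de $Y$ n'introduit que des diviseurs dont le corps des fonctions contient $\C$; sans cela, un diviseur exceptionnel \`a corps des fonctions formellement r\'eel rendrait $\alpha$ ramifi\'ee. Les v\'erifications de~(a), (b) et~(c), ainsi que l'irr\'eductibilit\'e de $F$ sur $\C$ et l'anisotropie de $q_u$, sont quant \`a elles routini\`eres.
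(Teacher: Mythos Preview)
Your proof is correct and follows essentially the same approach as the paper: the same explicit determination of the singular locus for (a) and (c), the same fiber-by-fiber connectedness argument via the image $[0,\infty]\subset\P^1(\R)$ for (b), and the same witness class $(-1,u)\in\Br(\R(u))$ for (d). The only difference is that for (d) the paper simply invokes \cite[Proposition~11.1]{colliot2024certaines}, whereas you supply a self-contained argument---checking unramifiedness via residues on a resolution (using that $-1$ becomes a square both in the function field of the fibre $\{a^2+b^2=0\}$ and along the exceptional divisors over $S$), and ruling out $\alpha\in\mathrm{im}\,\Br(\R)$ via the subform theorem and a discriminant comparison. This makes your write-up more self-contained but is not a genuinely different route.
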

 \begin{proof} 
Les propri\'et\'es (a) et (c) se v\'erifient en calculant explicitement le lieu singulier de $Y$, qui est form\'e d'un nombre fini de points dont aucun n'est r\'eel. Pour montrer (b) (la connexit\'e), il suffit de noter que l'application $Y(\R) \to \P^1(\R)$ a  pour image $[0,\infty]$, qui est connexe, et que pour tout $s\in [0,\infty]$ la fibre en $s$ est connexe: pour $0<s<\infty$, les points r\'eels des fibres sont des quadriques connexes, et pour $s=0,\infty$, ce sont des droites projectives. Pour (d), il suffit de rappeler que, d'apr\`es \cite[Proposition 11.1]{colliot2024certaines}, l'image de la classe de quaternions $(-1,u) \in \Br(\R(u))=\Br(\R(\P^1))$ dans $\Br(\R(Y))$ est non ramifi\'ee, s'annule dans $\Br(Z_\C)$, et n'est pas dans l'image de $\Br(\R)$.
  \end{proof} 
 
Soit $P(u)=a_0 + a_1u + a_2 u^2 \in \R[u]$, avec $a_0a_2\neq 0$.
On consid\`ere le sous-sch\'ema ferm\'e de $\P^3_{\R[t]} \times_{\R[t]} \A^1_{\R[t]}$ avec coordonn\'ees $(a,b,c,d;u)$ donn\'e par l'\'equation
  \[a^2+(1+u^2)b^2 + (tP(u)- u)c^2 -ud^2=0.\]
  On  recolle ce sch\'ema avec le sous-sch\'ema ferm\'e de $\P^3_{\R[t]} \times_{\R[t]} \A^1_{\R[t]}$ avec coordonn\'ees $(a',b',c',d';v)$
  donn\'e par l'\'equation
  \[a'^2+(1+v^2)b'^2 +  (tQ(v)-v)c'^2 -vd'^2=0,\]
  o\`u $Q(v)\coloneqq v^2P(1/v)= a_0v^2 + a_1v + a_2$,  via le changement de variables 
  \[(a',b',c',d';v)=(a/u, b, c, d; 1/u).\]
  On obtient une fibration en quadriques $\mc{X} \to \P^1_{\R[t]}$. La fibre sp\'eciale en $t=0$ est la $\R$-vari\'et\'e $Y$. La fibre g\'en\'erique sur le corps $\R(t)$
  est une vari\'et\'e projective lisse munie d'une fibration en quadriques sur $\P^1_{\R(t)}$ dont toutes les fibres g\'eom\'etriques sont int\`egres.

  Le th\'eor\`eme \ref{generateur} et le lemme \ref{refX} donnent maintenant :
  \begin{thm}\label{prop-fibre-quadriques}
Soit $\mc{X}\to \P^1_{\R[t]}$ le morphisme d\'efini ci-dessus.
La $\puiseux$-vari\'et\'e   $X\coloneqq \mc{X}\times_{\R[t]} \puiseux$ est une vari\'et\'e projective lisse fibr\'ee en surfaces quadriques sur $\P^1_{\puiseux}$ \`a fibres g\'eom\'etriques int\`egres. Elle n'est pas $CH_0$-triviale et donc n'est pas stablement rationnelle. L'espace $X(\puiseux)$ est semi-alg\'ebriquement connexe.
  \end{thm}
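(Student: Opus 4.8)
The plan is to obtain Theorem~\ref{prop-fibre-quadriques} as a direct instance of Theorem~\ref{generateur}, applied to the singular $\R$-variety $Y$ introduced in this subsection together with the deformation $\mc{X}$ constructed above. By Lemma~\ref{refX}, $Y$ satisfies hypotheses (a)--(d) of Theorem~\ref{generateur}, so all that has to be done is to check that the family $\mc{X}$ fits the framework of that theorem and then to read off the two conclusions.

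For the first task I would take $U\coloneqq\A^1_\R=\Spec(\R[t])$, a Zariski neighbourhood of $0$, and let $f\colon\mc{X}\to U$ be the composite of the quadric fibration $\mc{X}\to\P^1_{\R[t]}$ with the structure morphism $\P^1_{\R[t]}\to\Spec(\R[t])=U$; being a composite of projective morphisms, $f$ is projective. Specialising the two defining equations of $\mc{X}$ at $t=0$ yields exactly the two equations defining $Y$, so $\mc{X}|_{t=0}\cong Y$. The generic fibre of $f$ is the $\R(t)$-variety $\mc{X}_{\R(t)}$, which by construction is smooth and projective and carries a quadric-surface fibration over $\P^1_{\R(t)}$ all of whose geometric fibres are integral. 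Hence $f$ meets every hypothesis placed on the family in Theorem~\ref{generateur}.

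It remains to transfer these properties to $X$ and to interpret the output of Theorem~\ref{generateur}. Since $\R[t]\subset\puiseux$, the morphism $\eta\colon\Spec(\puiseux)\to\A^1_\R$ factors through the generic point of $\A^1_\R$, so $X=\mc{X}\times_{U,\eta}\Spec(\puiseux)$ is the base change $\mc{X}_{\R(t)}\times_{\R(t)}\puiseux$ along the field extension $\R(t)\subset\puiseux$. Smoothness, projectivity, the quadric-surface fibration over $\P^1$, and geometric integrality of the fibres are all stable under this extension, which gives the geometric description of $X$ asserted in the statement; in particular $X$ is geometrically integral, so the notion of $CH_0$-triviality applies to it. Conclusions (i) and (ii) of Theorem~\ref{generateur} are then precisely the statements that $X(\puiseux)$ is semi-algebraically connected and that $X$ is not $CH_0$-trivial, hence not stably rational. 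I do not anticipate a real obstacle here: the only substantive input is Lemma~\ref{refX} (itself resting on the unramified Brauer group computation of \cite{colliot2024certaines}), and once it is granted the present theorem is a formal consequence of Theorem~\ref{generateur}; the only points needing any care are the two identifications $\mc{X}|_{t=0}\cong Y$ and $X\cong\mc{X}_{\R(t)}\times_{\R(t)}\puiseux$, both of which are immediate.
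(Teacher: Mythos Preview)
Your proposal is correct and follows exactly the paper's approach: the paper's proof consists of the single sentence ``Le th\'eor\`eme~\ref{generateur} et le lemme~\ref{refX} donnent maintenant'', the verifications that $\mc{X}|_{t=0}\cong Y$ and that the generic fibre over $\R(t)$ is smooth with geometrically integral quadric fibres having been recorded in the text immediately preceding the statement. Your write-up simply spells out these verifications and the base-change argument more explicitly, with no substantive difference in strategy.
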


Soit $R$ un corps r\'eel clos. Dans \cite{colliot2024certaines} et \cite{benoist2024rationality} on a \'etudi\'e des fibrations en surfaces quadriques $X'\to \P^1_R$ d'\'equation affine
\begin{equation}\label{u-p(u)} x^2+y^2+z^2= u p(u)\end{equation}
avec $p(u)\in R[u]$ un polynôme s\'eparable. Si $p(u)$ est strictement positif sur $R$, l'espace $X'(R)$ est semi-alg\'ebriquement connexe. 
  
  \begin{prop}
  La fibre g\'en\'erique de la fibration $X \to \P^1_{\puiseux}$ de la proposition \ref{prop-fibre-quadriques} n'est isomorphe \`a la fibre g\'en\'erique d'aucune fibration $X'\to \P^1_{\puiseux}$ de mod\`ele affine (\ref{u-p(u)}). 
   \end{prop}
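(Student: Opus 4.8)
The plan is to realise both generic fibres as smooth quadric surfaces over the field $F\coloneqq\puiseux(u)$ of rational functions on $\P^1_{\puiseux}$ and to separate them by an invariant of the algebraic theory of quadratic forms. Write $R\coloneqq\puiseux$, so $t\in R$ is a fixed positive infinitesimal. From the defining equations, the generic fibre of $X\to\P^1_{\puiseux}$ is the smooth quadric surface $Q\subset\P^3_F$ attached to the form $q\coloneqq\langle 1,\,1+u^2,\,tP(u)-u,\,-u\rangle$ over $F$, while the generic fibre of $X'\to\P^1_{\puiseux}$ is the smooth quadric surface $Q'\subset\P^3_F$ attached to $q'\coloneqq\langle 1,1,1,-u\,p(u)\rangle$ over $F$, with $p(u)\in R[u]$ separable. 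Since two smooth quadric surfaces over a field are isomorphic exactly when the associated four-dimensional quadratic forms are similar, it suffices to show that $q$ and $q'$ are never similar.

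First I would compare signed discriminants, which are similarity invariants of even-dimensional forms: $q\sim q'$ would force $(1+u^2)(tP(u)-u)\equiv p(u)\pmod{F^{*2}}$. A short computation using that $t$ is infinitesimal shows that $1+u^2$ is irreducible over $R$ and prime to $tP(u)-u$, and that $tP(u)-u=ta_2u^2+(ta_1-1)u+ta_0$ has discriminant $(ta_1-1)^2-4t^2a_0a_2$, which is infinitely close to $1$, hence nonzero; therefore $(1+u^2)(tP(u)-u)$ is separable of degree $4$. As two separable polynomials congruent modulo $F^{*2}$ differ by a square constant, we get $p(u)=c\,(1+u^2)(tP(u)-u)$ with $c\in R^{*2}$; scaling $q'$ by the square $c^{-1}$ does not change $Q'$, so we may assume $p(u)=(1+u^2)(tP(u)-u)$, and we set $\delta\coloneqq-u(1+u^2)(tP(u)-u)$, the common discriminant.

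Now $q\sim q'$ implies $q_K\sim q'_K$ over $K\coloneqq F(\sqrt\delta)$, the function field of the hyperelliptic $R$-curve $C\colon y^2=\delta$. Over $K$ the element $\delta$ is a square, so $-u\,p(u)\equiv 1$ and $-u\equiv(1+u^2)(tP(u)-u)\pmod{K^{*2}}$, whence $q'_K$ is isometric to $\langle 1,1,1,1\rangle_K=\langle\langle -1,-1\rangle\rangle_K$ and $q_K$ is isometric to $\langle 1,\,1+u^2,\,tP(u)-u,\,(1+u^2)(tP(u)-u)\rangle_K=\langle\langle -(1+u^2),\,u-tP(u)\rangle\rangle_K$. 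The curve $C$ has a real point above $u=1$, since $\delta(1)=2\bigl(1-tP(1)\bigr)>0$ (as $tP(1)$ is infinitesimal) is a square in the real closed field $R$; hence $K$ is formally real and $\langle\langle -1,-1\rangle\rangle_K$ is anisotropic. By roundness of Pfister forms, similarity of $q_K$ and $q'_K$ then forces the isometry $\langle\langle -(1+u^2),\,u-tP(u)\rangle\rangle_K\cong\langle\langle -1,-1\rangle\rangle_K$, i.e. the equality $\bigl(-(1+u^2),\,u-tP(u)\bigr)_K=(-1,-1)_K$ in $\Br(K)$. To contradict this, I would evaluate both quaternion classes at a real point $\tilde P\in C(R)$ above $u=1$: there $-(1+u^2)=-2$ and $u-tP(u)=1-tP(1)$ are units, so both classes lie in $\Br(\mathcal O_{C,\tilde P})$ and specialise to $\Br(R)=\Z/2$; the class $(-1,-1)_K$ specialises to the nontrivial $(-1,-1)_R$, while $\bigl(-(1+u^2),\,u-tP(u)\bigr)_K$ specialises to $(-2,\,1-tP(1))_R=0$ because $1-tP(1)>0$ is a square in $R$. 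This contradiction shows $q\not\sim q'$, hence $Q\not\cong Q'$, proving the proposition.

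The main obstacle is that the discriminant comparison alone does not conclude, because $(1+u^2)(tP(u)-u)$ is itself separable of degree $4$: that step only pins down $p$, and one must bring in the finer Clifford invariant over the discriminant double cover $C$. A further subtlety is that the quaternion class $\bigl(-(1+u^2),\,u-tP(u)\bigr)$ is ramified on $\A^1_F$ — for instance at the real zeros of $tP(u)-u$ — but this ramification disappears over $K$; one therefore cannot conclude by comparing ramification divisors over $K$, and instead exploits the infinitesimality of $t$ to locate a real point of $C$ at which the two classes take different values in $\Br(R)$.
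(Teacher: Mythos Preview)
Your proof is correct and follows essentially the same route as the paper: both arguments pass to the discriminant double cover $C=\Delta$ of $\P^1_R$, identify the generic fibres over $K=R(\Delta)$ with $2$-Pfister forms, and separate them via the associated quaternion class in $\Br(K)$, detected by evaluation at real points of $\Delta$. The packaging differs slightly: the paper invokes the general framework of \cite{colliot1993groupe} to attach the pair $(\Delta,\alpha)$ to the fibration, observes that for every fibration of type~(\ref{u-p(u)}) one has $\alpha\in\operatorname{im}(\Br(R)\to\Br(\Delta))$ (here $\alpha=(-1,-1)$), and then shows that for our $X$ the class $\alpha=(-1,u)$ is nonconstant on $\Delta(R)$ by evaluating at real points with $u>0$ and with $u<0$; you instead first force $p(u)=c\,(1+u^2)(tP(u)-u)$ by matching discriminants and then contradict $\alpha=(-1,-1)_K$ by evaluating at the single real point above $u=1$. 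Either variant works; the paper's version dispenses with the preliminary determination of $p$, while yours needs only one evaluation point.
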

   \begin{proof}
Soient plus g\'en\'eralement $k$ un corps de caract\'eristique diff\'erente de $2$ et $p\colon X \to \P^1_{k}$ une fibration en surfaces quadriques dont toutes les fibres 
g\'eom\'etriques sont int\`egres. On suppose qu'il existe au moins une fibre
g\'eom\'etrique singuli\`ere. 
 \`{A} une telle fibration on associe le rev\^{e}tement discriminant  $\Delta \to \P^1_{k}$.
La courbe $\Delta$ est projective et lisse, et munie d'une classe
$\alpha \in  \Br(\Delta)$ qui est obtenue de la fa\c{c}on suivante.
On prend une section lisse quelconque de la quadrique g\'en\'erique sur $k(\P^1)$
et on l'\'etend  au corps $k(\Delta)$. On obtient alors une quadrique sur
$k(\Delta)$ d\'efinie par une forme quadratique $\langle 1,-a,-b,ab\rangle$. La classe $\alpha$ est la classe de l'alg\`ebre de quaternions $(a,b)\in \on{Br}(k(\Delta))$. Pour plus de d\'etails, on consultera
\cite[Th\'eor\`eme 2.5]{colliot1993groupe}.
L'hypoth\`ese sur les fibres singuli\`eres
assure que l'on a $\alpha \in \Br(\Delta)$. 
Pour les familles $X \to \P^1_{k}$ consid\'er\'ees
dans \cite{colliot2024certaines} et \cite{benoist2024rationality}, la classe $\alpha$ est dans l'image
de $\Br(k) \to \Br(\Delta)$. De fait, pour une fibration d'\'equation affine
$x^2+y^2+t^2=P(u)$, avec $P(u)\in k[u]$, la classe $\alpha \in \Br (k(\Delta))$ est l'image de $(-1,-1) \in \Br(k)$.

Soit $R=\puiseux$. Dans le cas consid\'er\'e au th\'eor\`eme 
\ref{prop-fibre-quadriques}, avec $k=R$,
la courbe $\Delta$ est d\'efinie sur $R$ par l'\'equation
$$ w^2= (1+u^2)(-tuP(u)+u^2).$$
On prend  la section de la quadrique g\'en\'erique donn\'ee par $c=0$. La classe
$\alpha \in \Br(\Delta)$ est donc l'image de $(u,-(1+u^2))=(-1,u) \in \Br(R(\P^1))$ dans 
$\Br(R(\Delta))$.   En un point $A \in \Delta(\R) \subset \Delta(R)$ avec $u>0$, 
on a $\alpha(A)=0$. En un point $B \in \Delta(\R) \subset \Delta(R)$
avec $u<0$ on a $\alpha(B)\neq 0$. Ainsi $\alpha$ n'est pas dans
l'image de $\Br(R)$.
  \end{proof}

\subsection{Intersections compl\`etes lisses de deux quadriques dans \texorpdfstring{$\P^5$}{P5}}
Consid\'erons l'espace projectif $\P^5_\R$ avec coordonn\'ees homog\`enes $a,b,c,d,e,f$. La vari\'et\'e du paragraphe \ref{contreexemple} contient l'ouvert affine d\'efini par l'\'equation 
\[a^2+1+u^2 - u (c^2+d^2)=0,\]
ou encore le syst\`eme de deux \'equations affines
\begin{align*}
    a^2+1+u^2 -uv &=0,  \\
    c^2+d^2-v&=0.
\end{align*}
En changeant les notations, on voit que 
la vari\'et\'e ainsi d\'efinie est birationnelle 
\`a l'intersection de
deux quadriques  $Y\subset \P^5_{\R}$
donn\'ee par les \'equations homog\`enes
\begin{align*}
    a^2+ b^2+c^2   - cd&=0,\\
    e^2+f^2-bd&= 0.
\end{align*}
Notons que $Y \subset \P^5_\R$ ne contient pas de droite r\'eelle: en effet la section de $Y$ par $d=0$ est donn\'ee par $a^2+ b^2+c^2=e^2+f^2=0$ et donc ne contient pas de point r\'eel. (Cette remarque ne sera pas utilis\'ee dans la suite.)

\begin{lemma}\label{speciale-int-quadriques}
    Soit $Y\subset \P^5_\R$ l'intersection de deux quadriques d\'efinies ci-dessus. Alors $Y$ satisfait les propri\'et\'es (a)-(d) du th\'eor\`eme \ref{generateur}. 
\end{lemma}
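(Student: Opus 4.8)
The plan is to reduce (a)--(d) to the corresponding properties of the conic bundle $\pi\colon Y'\to\P^1_\R$ of Lemma~\ref{refX}, exploiting the $\R$-birational equivalence $Y\sim_\R Y'$ exhibited just before the statement. Only (a) and (c) require a direct argument; (b) and (d) are then formal consequences of Lemma~\ref{refX} together with the birational-invariance results already established in the paper.

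For (a) and (c) I would compute the singular locus of $Y$ explicitly. Writing $Q_1=a^2+b^2+c^2-cd$ and $Q_2=e^2+f^2-bd$, a point of $Y$ is singular precisely when the Jacobian matrix
\[
\begin{pmatrix} 2a & 2b & 2c-d & -c & 0 & 0\\ 0 & -d & 0 & -b & 2e & 2f\end{pmatrix}
\]
has rank at most $1$; equivalently, a point $P\in Y$ is singular iff it lies on the vertex of some member of the pencil $\lambda Q_1+\mu Q_2$. The discriminant $\det(\lambda M_1+\mu M_2)$ is proportional to $\lambda^2\mu^2(\lambda^2+\mu^2)$, whose only multiple roots are $[\lambda:\mu]=[1:0]$ and $[0:1]$; the corresponding singular quadrics $Q_1=0$ and $Q_2=0$ have vertices the lines $\{a=b=c=d=0\}$ and $\{b=d=e=f=0\}$, while the two simple roots $[1:\pm i]$ contribute a single point each, which one checks does not lie on $Y$. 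Intersecting the two vertex lines with the other quadric, one finds that $\on{Sing}(Y)$ consists exactly of the four points $[\pm i:0:1:0:0:0]$ and $[0:0:0:0:\pm i:1]$. None of these is real, so $Y(\R)$ lies in the smooth locus of $Y$, which is (a), and the reduced singular scheme $S$ is the disjoint union of two copies of $\Spec\C$, so $S\to\Spec\R$ factors through $\Spec\C$, which is (c). Moreover $\dim S=0<3=\dim Y$, so the complete intersection $Y$ is regular in codimension one, hence normal by Serre's criterion, hence (being a geometrically connected complete intersection of positive dimension) geometrically integral, so that $Y$ is a legitimate input for Theorem~\ref{generateur}; the same computation over $\C$ shows $\on{Sing}(Y_\C)$ is the same four points.

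For (b): having proved (a) for $Y$, and knowing (a) for $Y'$ by Lemma~\ref{refX}, I invoke Lemma~\ref{connexe-invariant}, which gives that $Y(\R)$ and $Y'(\R)$ have the same number of semi-algebraically connected components; over $\R$ this number coincides with the number of topological connected components, and $Y'(\R)$ is connected (and nonempty) by Lemma~\ref{refX}, hence so is $Y(\R)$. For (d): the groups $\Br_{\on{nr}}(\R(Y)/\R)$ and $\Br_{\on{nr}}(\C(Y)/\C)$, as well as the map $\Br(\R)\to\Br_{\on{nr}}(\R(Y)/\R)$ and the restriction between the two unramified groups, depend only on the field extensions $\R(Y)/\R$ and $\C(Y)=\C\otimes_\R\R(Y)$ over $\C$; since $Y\sim_\R Y'$ these function fields coincide with $\R(Y')$ and $\C(Y')$, so statement (d) for $Y$ is verbatim statement (d) for $Y'$, which is part of Lemma~\ref{refX} (the nonzero unramified class being the image of the quaternion symbol $(-1,u)$). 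I expect the only laborious step to be the case analysis determining $\on{Sing}(Y)$ needed for (a) and (c), and even that is entirely elementary; everything else is formal given Lemma~\ref{refX} and the birational invariance of $\Br_{\on{nr}}$ and of the number of semi-algebraic components.
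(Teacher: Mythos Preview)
Your proposal is correct and follows essentially the same approach as the paper: compute the singular locus explicitly to get (a) and (c), then deduce (b) and (d) from Lemma~\ref{refX} via the birational invariance results (Lemma~\ref{connexe-invariant} for the number of real components, and the definition of $\Br_{\on{nr}}$ for (d)). Your pencil analysis for $\on{Sing}(Y)$ is more detailed than the paper's bare statement of the four singular points, and your remark that $Y$ is geometrically integral (needed to feed $Y$ into Theorem~\ref{generateur}) is a welcome addition that the paper leaves implicit.
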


\begin{proof}
    Le lieu singulier de $Y$ est form\'e des $4$ points non r\'eels donn\'es par 
$$(a,b,c,d,e,f)= (\pm i, 0, 1,0,0,0),\, (0,0,0,0,\pm i, 1).$$
Ceci entraîne (a) et (c). Comme $Y$ est birationnelle \`a la vari\'et\'e du paragraphe \ref{contreexemple}, la propri\'et\'e (b) suit des lemmes \ref{connexe-invariant} et \ref{refX}, et la propri\'et\'e (d) suit du lemme \ref{refX}.
\end{proof}

Soient $q_1, q_2$ deux formes quadratiques sur $\R$  en les 6 variables $(a,b,c,d,e,f)$. Supposons 
que le syst\`eme $q_1=q_2=0$ d\'efinisse une intersection compl\`ete lisse de deux quadriques dans $\P^5_{\R}$. On sait qu'il suffit pour cela que le d\'eterminant $\on{det}(q_1+tq_2)$  soit un polyn\^{o}me s\'eparable de degr\'e $6$.

Soit $\mc{X}\subset \P^5_{\R[t]}$ d\'efini par le syst\`eme
\begin{align*}
    a^2+ b^2+c^2   - cd +tq_1 &=0,\\
    e^2+f^2 -bd  +t q_2 &=0.
\end{align*}
Alors $\mc{X}|_{t=0}=Y$ et la fibre g\'en\'erique sur $\R(t)$ est une intersection compl\`ete lisse de deux quadriques. Le th\'eor\`eme \ref{generateur} et le lemme \ref{speciale-int-quadriques} donnent ici :

\begin{thm}\label{prop-intersection-quadriques}
La $\puiseux$-vari\'et\'e $X= \mc{X}\times_{\R[t]} \puiseux$ est une intersection compl\`ete lisse de deux quadriques dans $\P^5_{\puiseux}$ qui n'est pas $CH_0$-triviale, et donc n'est pas
stablement rationnelle. L'espace $X(\puiseux)$ est semi-alg\'ebriquement connexe.\end{thm}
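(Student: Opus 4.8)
The plan is to deduce Theorem \ref{prop-intersection-quadriques} directly from Theorem \ref{generateur} together with Lemma \ref{speciale-int-quadriques}, exactly as the preceding two subsections were handled. First I would observe that the $\R[t]$-scheme $\mc{X}\subset \P^5_{\R[t]}$ defined by the system above is a projective morphism $f\colon \mc{X}\to \A^1_\R$ (restricting to a Zariski neighbourhood $U$ of $0$), that $\mc{X}|_{t=0}=Y$ by construction, and that its generic fibre over $\R(t)$ is a smooth complete intersection of two quadrics. Smoothness of the generic fibre follows from the stated criterion: if $q_1,q_2$ are chosen so that $\det(q_1+sq_2)$ is separable of degree $6$, then after possibly shrinking $U$ the fibre over each point of $U\setminus\{0\}$ — and in particular the generic fibre — is a smooth complete intersection of two quadrics in $\P^5$. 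Hence $f$ is a projective morphism of $\R$-varieties with smooth generic fibre and $\mc{X}|_{t=0}\cong Y$, so the hypotheses of Theorem \ref{generateur} on the family are met.

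Next I would invoke Lemma \ref{speciale-int-quadriques}, which asserts precisely that the singular intersection $Y$ satisfies conditions (a)--(d) of Theorem \ref{generateur}: its singular locus consists of four non-real points (giving (a) and (c)), $Y(\R)$ is connected by birational invariance from the conic-bundle model of \S \ref{contreexemple} (condition (b), via Lemmas \ref{connexe-invariant} and \ref{refX}), and the Brauer-theoretic non-surjectivity (d) is inherited from Lemma \ref{refX} through birational invariance of the unramified Brauer group. With both the family hypotheses and conditions (a)--(d) in place, Theorem \ref{generateur} applies verbatim.

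Finally I would read off the two conclusions. Part (i) of Theorem \ref{generateur} gives that $X(\puiseux)$ is semi-algebraically connected, where $X\coloneqq \mc{X}\times_{\R[t]}\puiseux$ is the generic fibre base-changed along $\eta\colon \Spec(\puiseux)\to\A^1_\R$; since the generic fibre of $f$ is smooth and is a complete intersection of two quadrics in $\P^5$, so is $X$, a smooth complete intersection of two quadrics in $\P^5_{\puiseux}$. Part (ii) gives that $X$ is not $CH_0$-trivial, hence in particular not stably rational (a stably rational, or even retract rational, smooth proper geometrically integral variety being $CH_0$-trivial, as recalled in the preliminaries). This completes the proof.

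There is essentially no genuine obstacle here: all the substantive work — the Nash-triviality / Scheiderer argument behind Theorem \ref{critere-connexe}, the revisited specialisation method of \S \ref{parasp} packaged into Theorem \ref{1.14K} and assembled in Theorem \ref{generateur}, and the explicit verification of (a)--(d) for $Y$ in Lemma \ref{speciale-int-quadriques} — has already been carried out. The only point requiring a moment's care is checking that the chosen perturbation $tq_1,tq_2$ does produce a flat family whose generic fibre is smooth of the expected type; this is handled by the separability-of-determinant criterion, which is an open condition on $(q_1,q_2)$ and is satisfied for a suitable (indeed generic) choice, so one simply fixes such a pair.
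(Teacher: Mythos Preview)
Your proposal is correct and follows exactly the paper's approach: the paper's own proof consists of the single sentence ``Le th\'eor\`eme \ref{generateur} et le lemme \ref{speciale-int-quadriques} donnent ici :'', and you have simply unpacked this by checking that the family $\mc{X}\to \A^1_\R$ meets the hypotheses of Theorem \ref{generateur} and then reading off conclusions (i) and (ii). The only elaboration you add beyond the paper is the justification that the generic fibre is smooth, which the paper asserts without argument; your reasoning via the separability criterion and openness of smoothness is adequate.
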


 \begin{rmk}
 On ne peut envisager une extension simple de l'argument ci-dessus aux intersections de deux quadriques en dimension sup\'erieure. On sait en effet 
 que sur tout corps $k$ de caract\'eristique z\'ero, pour toute vari\'et\'e $Y$ intersection compl\`ete g\'eom\'etriquement int\`egre non conique de deux quadriques dans $\P^n_k$ avec $n\geq 6$, et tout mod\`ele projectif et lisse $Z$ de $Y$, l'application
 $\Br(k) \to \Br(Z)$ est surjective \cite[Theorem 3.8]{colliot1987intersectionsI}.
 Par ailleurs, Hassett, Koll\'ar et Tschinkel 
\cite{hassett2022rationality} ont montr\'e que toute intersection compl\`ete lisse de deux quadriques $X \subset \P^6_{\R} $
 est $\R$-rationnelle d\`es que $X(\R)$ est connexe. On v\'erifie que leur d\'emonstration vaut sur tout corps r\'eel clos $R$.
 \end{rmk}

 \subsection{Hypersurfaces cubiques lisses dans \texorpdfstring{$\P^4$}{P4} }\label{exemplecubique}

La vari\'et\'e $X$ de \S\ref{contreexemple} est $\R$-bi\-ra\-tion\-nelle
\`a 
 l'hypersurface cubique singuli\`ere $Y \subset \P^4_{\R}$, g\'eom\'etri\-quement int\`egre, d\'efinie
par l'\'equation
\begin{equation}\label{Ycub}
b(a^2+b^2+c^2) - c(d^2+e^2)=0
\end{equation}  
en coordonn\'ees homog\`enes $a,b,c,d,e$. 

\begin{lemma}\label{cubique}
    L'hypersurface cubique singuli\`ere $Y \subset \P^4_{\R}$  d\'efinie ci-dessus satisfait les hypoth\`eses (a)-(d) du th\'eor\`eme \ref{generateur}.
\end{lemma}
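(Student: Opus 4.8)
The strategy is to transfer each of the four properties (a)--(d) from the fibration in surfaces quadriques $\pi\colon Y'\to\P^1_\R$ of \S\ref{contreexemple} (which satisfies (a)--(d) by Lemma \ref{refX}) to the cubic hypersurface $Y\subset\P^4_\R$ of \eqref{Ycub}, using the stated $\R$-birational equivalence between $X$ of \S\ref{contreexemple} and $Y$. For (a) and (c) this birational comparison is not enough, so I would compute the singular locus of $Y$ directly: differentiating $F=b(a^2+b^2+c^2)-c(d^2+e^2)$ with respect to $a,b,c,d,e$ gives the system $2ab=0$, $a^2+3b^2+c^2=0$, $2bc+d^2+e^2=-(d^2+e^2)\cdot 0$... — more precisely $\partial_c F=-(d^2+e^2)+\text{(no, }b\cdot 2c)$, so $2bc=d^2+e^2$, together with $2cd=0$ and $2ce=0$. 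I expect this to force $c=0$ (hence $d^2+e^2=0$, so $d=e=0$), then $2ab=0$ and $a^2+3b^2=0$, whence the singular locus is a finite set of non-real points of the form $(\pm i:1:0:0:0)$-type together with possibly $(1:0:0:0:0)$, which one checks is in fact smooth; in any case no real point occurs and $S$ is supported on $\Spec\C$-points. This yields (a) and (c).

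For (b), I would invoke Lemma \ref{connexe-invariant}: since $Y(\R)$ lies in the smooth locus by (a), the number of semi-algebraically connected components of $Y(\R)$ is a birational invariant among such projective integral $\R$-varieties, and the same holds for $X$ from \S\ref{contreexemple}; by Lemma \ref{refX} the latter has connected real locus (one component), hence so does $Y$. For (d), I would argue that the unramified Brauer group is likewise insensitive to the birational modification in the relevant sense: the class exhibited in Lemma \ref{refX}, coming from $(-1,u)\in\Br(\R(\P^1))$, lives in $\Br_{\mathrm{nr}}(\R(X)/\R)=\Br_{\mathrm{nr}}(\R(Y)/\R)$ since $\R(X)=\R(Y)$, is killed after base change to $\C$, and is not in the image of $\Br(\R)$; thus the map $\Br(\R)\to\Ker[\Br_{\mathrm{nr}}(\R(Y)/\R)\to\Br_{\mathrm{nr}}(\C(Y)/\C)]$ fails to be surjective, which is exactly (d).

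The only genuinely non-formal point is the explicit determination of the singular locus of the cubic \eqref{Ycub}: one must verify that the Jacobian system really does cut out a finite non-real scheme and that no real or ``extra'' singular point has been overlooked (in particular checking the point $(1:0:0:0:0)$ and any coordinate point where the naive case analysis might miss a branch). Everything else is a direct citation of Lemma \ref{refX}, Lemma \ref{connexe-invariant}, and the identification of function fields under the birational map. I therefore expect the write-up to consist of one short explicit computation followed by three one-line reductions, entirely parallel to the proof of Lemma \ref{speciale-int-quadriques}.
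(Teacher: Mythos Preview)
Your overall strategy is exactly the one in the paper: compute the singular locus directly for (a) and (c), then deduce (b) from Lemmas \ref{connexe-invariant} and \ref{refX} via the birational equivalence, and (d) from Lemma \ref{refX} and the identification $\R(X)=\R(Y)$. The paper's proof is essentially the three-line version of your plan.

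However, your sketch of the Jacobian computation is flawed and, as written, would miss all four singular points. From $2cd=2ce=0$ one gets $c=0$ \emph{or} $d=e=0$, not just $c=0$. In the branch $c\neq 0$, $d=e=0$, the remaining equations $2ab=0$, $a^2+3b^2+c^2=0$, $2bc=0$ force $b=0$ and then $a^2+c^2=0$, giving the two points $(\pm i:0:1:0:0)$. In the branch $c=0$, the equation $2bc=d^2+e^2$ gives $d^2+e^2=0$, which over $\C$ does \emph{not} force $d=e=0$ but allows $d=\pm ie$; combined with $2ab=0$ and $a^2+3b^2=0$ (which together force $a=b=0$), one finds the two points $(0:0:0:\pm i:1)$. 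Your candidate $(\pm i:1:0:0:0)$ fails the equation $2ab=0$, and your sub-branch $c=0$, $d=e=0$ leads only to $a=b=c=d=e=0$, which is not a projective point. The correct singular locus is thus exactly the four non-real points $(\pm i:0:1:0:0)$ and $(0:0:0:\pm i:1)$, as stated in the paper; once this is fixed, the rest of your plan goes through verbatim.
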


\begin{proof}
Le lieu singulier de $Y$ est form\'e des $4$ points non r\'eels donn\'es par 
$$(a,b,c,d,e) = (\pm i, 0,1,0,0),\, (0,0,0, \pm i, 1).$$
Ceci entraîne (a) et (c). Comme $Y$ est birationnelle \`a la vari\'et\'e du paragraphe \ref{contreexemple}, la propri\'et\'e (b) suit des lemmes \ref{connexe-invariant} et \ref{refX}, et la propri\'et\'e (d) suit du lemme \ref{refX}.
\end{proof}

Soit  $g(a,b,c,d,e)$ une forme cubique non singuli\`ere. Soit $\mc{X} \subset \P^4_{\R[t]}$  le sch\'ema d\'efini par l'\'equation
 $$b(a^2+b^2+c^2) - c(d^2+e^2) -tg(a,b,c,d,e)=0.$$
On a $\mc{X}|_{t=0}=Y$. Le th\'eor\`eme \ref{generateur} et le lemme \ref{cubique} donnent ici :

\begin{thm}\label{prop-cubique}
La $\puiseux$-vari\'et\'e $X\coloneqq \mc{X}\times_{\R[t]} \puiseux$ est une hypersurface cubique lisse  dans $\P^4_{\puiseux}$ qui n'est pas $CH_0$-triviale, et donc n'est pas
stablement rationnelle. L'espace $X(\puiseux)$ est semi-alg\'ebriquement connexe.
\end{thm}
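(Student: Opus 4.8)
Le plan est d'appliquer le théorème \ref{generateur} à l'hypersurface cubique singulière géométriquement intègre $Y \subset \P^4_\R$ d'équation (\ref{Ycub}), à l'ouvert $U = \A^1_\R = \Spec(\R[t])$ tout entier, et au morphisme de projection $f \colon \mc{X} \to U$, où $\mc{X} \subset \P^4_{\R[t]}$ est l'hypersurface $b(a^2+b^2+c^2) - c(d^2+e^2) - tg = 0$ introduite ci-dessus. Les hypothèses (a)--(d) du théorème \ref{generateur} portant sur $Y$ sont exactement le contenu du lemme \ref{cubique} ; leur vérification se ramène, grâce à la birationalité de $Y$ avec la fibration en surfaces quadriques du \S \ref{contreexemple} (lemmes \ref{connexe-invariant} et \ref{refX}), au calcul du groupe de Brauer non ramifié de \cite[Proposition 11.1]{colliot2024certaines}. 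C'est là, et dans le théorème \ref{generateur} lui-même, que réside toute la substance arithmétique ; l'essentiel est donc déjà en place.

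Il restera à vérifier les conditions de nature géométrique requises sur le morphisme $f$. Il est projectif puisque $\mc{X}$ est fermé dans $\P^4_{\R[t]}$, et plat sur $\A^1_\R$ car $\mc{X}$, hypersurface de $\P^4_{\R[t]}$, n'a aucune composante contenue dans une fibre fermée --- une telle composante forcerait un facteur commun non trivial de $F_0$ et $g$, rendant $g$ réductible, donc singulière. En posant $t = 0$ dans l'équation de $\mc{X}$ on retrouve mot pour mot l'équation (\ref{Ycub}), d'où $\mc{X}|_{t=0} \cong Y$. Enfin la fibre générique $\mc{X}_{\R(t)} \subset \P^4_{\R(t)}$ est lisse : c'est le membre générique du pinceau de cubiques engendré par $F_0$ et $g$, et le lieu des paramètres donnant un membre singulier est, par propreté, un fermé de $\P^1_\R$ ; il est strict puisqu'il évite le paramètre du membre lisse $\{g = 0\}$, donc fini, donc ne contient pas le point générique, ce qui établit la lissité de $\mc{X}_{\R(t)}$. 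On peut aussi invoquer directement le théorème de Bertini.

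Une fois ces points établis, le théorème \ref{generateur} s'applique et donne les conclusions souhaitées. La $\puiseux$-variété $X = \mc{X} \times_{\R[t]} \puiseux$ s'obtient à partir de la fibre générique $\mc{X}_{\R(t)}$, hypersurface cubique lisse de $\P^4_{\R(t)}$, par l'extension des scalaires $\R(t) \subset \puiseux$ : c'est donc bien une hypersurface cubique lisse de $\P^4_{\puiseux}$. Le point (i) du théorème \ref{generateur} entraîne que $X(\puiseux)$ est semi-algébriquement connexe, et le point (ii) que $X$ n'est pas $CH_0$-triviale, donc pas stablement rationnelle. Il n'y a pas d'obstacle sérieux propre à cet énoncé : le seul pas véritablement nouveau, et il est élémentaire, est l'argument de lissité de la fibre générique ci-dessus ; tout le travail difficile a été accompli en amont, dans le théorème \ref{generateur} et dans le lemme \ref{cubique}.
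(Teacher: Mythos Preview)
Your proof is correct and follows exactly the paper's approach: apply th\'eor\`eme~\ref{generateur} with the hypotheses (a)--(d) on $Y$ furnished by lemme~\ref{cubique}. You are simply more explicit than the paper in checking the geometric hypotheses on $f$ (projectivity, flatness, smoothness of the generic fiber via the pencil argument), which the paper leaves implicit; one small wording issue is that a component in a closed fibre would force $F_0$ and $g$ to be \emph{proportional} (hence $g$ singular), not merely to share a factor, but your conclusion is right.
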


\begin{rmk}
Pour les hypersurfaces cubiques singuli\`eres sur $\R$, des r\'esultats de non-rationalit\'e ont \'et\'e obtenus par Cheltsov, Tschinkel et Zhang \cite{cheltsov2024rationality}.
\end{rmk}

\subsection{Certaines fibrations en coniques  sur \texorpdfstring{$\P^2$}{P2}.}

Soit $R$ un corps r\'eel clos et soit $C$ sa clôture alg\'ebrique. Dans \cite{benoist2020clemens}, Benoist et Wittenberg consid\`erent les solides lisses $X$ fibr\'es en coniques sur
$\P^2_R$, donn\'es par des \'equations (voir ci-dessous des explications sur la notation)
$$s^2+v^2=f(x,y,z)$$
avec $f$ une forme homog\`ene de degr\'e $4$, positive,
d\'efinissant une courbe lisse dans $\P^2_R$. Pour tout solide $X$ de cette forme, $X(R)$ est semi-alg\'ebriquement connexe (cf. remarque \ref{fibres-connexes}) et $X_C$ est $C$-rationnel. Benoist et Wittenberg montrent que $X$ n'est pas $R$-rationnel; voir \cite[Corollary 3.6]{benoist2020clemens}. Nous allons montrer qu'il existe de tels solides lisses $X$ sur le corps $\puiseux$ qui ne sont pas $CH_0$-triviaux, donc ne sont pas stablement rationnels, et pour lesquels $X(\puiseux)$ est semi-alg\'ebriquement connexe, et qui sont g\'eom\'etriquement rationnels.

Soit $A$ un anneau, soit $S\coloneqq \P^2_A$, soit $d\geq 0$ un entier et soit $F \in H^0(S, \mc{O}(2d))$ une section avec lieu d'annulation $C\subset S$. Soit $\P \coloneqq \P_S(\mc{O}(-d) \oplus \mc{O}(-d) \oplus \mathcal{O})$ et soit $p\colon \P\to S$ le morphisme de projection. D'apr\`es \cite[Chapter 2, Proposition 7.11 (a)]{hartshorne} on a un isomorphisme \[p_* \mathcal{O}_{\P}(1) \simeq \mc{O}(-d) \oplus \mc{O}(-d) \oplus \mathcal{O}\] et on d\'efinit $u \in H^0(\P, \mathcal{O}_{\P}(1))$ comme l'image de la section $1\in A=H^0(S,\mc{O})$.
La formule de projection nous donne un isomorphisme 
\[ p_*(p^*\mc{O}(d) \otimes \mathcal{O}_{\P}(1)) \simeq \mathcal{O} \oplus \mathcal{O} \oplus \mc{O}(d),\]
et on d\'efinit $s, v \in H^0(\P, p^*\mc{O}(d) \otimes \mathcal{O}_{\P}(1))$ comme les images de $1\in H^0(S,\mc{O})$, vu comme premier et deuxi\`eme facteur, respectivement. Les sections $s^2$, $v^2$ et $u^2F$ appartiennent alors \`a $H^0(\P,p^*\mc{O}(2d)\otimes \mc{O}_\P(2))$, et on pose
\[Q_{A,F}\coloneqq \{s^2+v^2=u^2F\}\subset \P.\]

 \begin{lemma}\label{refBW}
Soient $q_1,q_2\in H^0(\P^2_\R, \mc{O}(2))$ deux formes quadratiques positives telles que les coniques lisses $q_1=0$ et $q_2=0$ dans $\P^2_\R$ soient transverses. Soit $Y\coloneqq Q_{\R, q_1q_2}$ et soit $\pi \colon  Y \to \P^2_{\R}$ le morphisme de projection. Alors $Y$ satisfait les propri\'et\'es (a)-(d) du th\'eor\`eme \ref{generateur}.
 \end{lemma}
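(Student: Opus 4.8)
The plan is to read off properties (a)--(d) from the geometry of the conic bundle $\pi\colon Y=Q_{\R,q_1q_2}\to\P^2_\R$, with equation $\{s^2+v^2=u^2q_1q_2\}$, whose discriminant curve is $C=C_1\cup C_2$ with $C_i\coloneqq\{q_i=0\}$. Since $q_1,q_2$ are positive definite, $C_1(\R)=C_2(\R)=\emptyset$, and since the two conics are transverse, $C_1\cap C_2$ is a reduced length-$4$ $\R$-scheme supported on non-real points whose residue fields are all isomorphic to $\C$.

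First I would compute $\on{Sing}(Y)$. In a local trivialisation of $\P=\P_S(\mc O(-2)\oplus\mc O(-2)\oplus\mc O)$ with fibre coordinates $[s:v:u]$, the Jacobian of $s^2+v^2-u^2F$ (with $F=q_1q_2$) vanishes precisely at the points with $s=v=0$ lying over $\{F=0,\ dF=0\}=\on{Sing}(C)=C_1\cap C_2$, where $Y$ has an ordinary double point (analytic model $xy=t_1t_2$); $Y$ is smooth elsewhere. Thus $\on{Sing}(Y)$ is supported on finitely many non-real points with residue field $\C$, which gives (c), and $Y(\R)\cap\on{Sing}(Y)=\emptyset$, which gives (a). For (b): $\pi$ is projective, hence $\pi(\R)\colon Y(\R)\to\P^2(\R)$ is continuous, proper and surjective — over $P\in\P^2(\R)$ the fibre $\{s^2+v^2=u^2q_1(P)q_2(P)\}$ with $q_1(P)q_2(P)>0$ has a real point, its real locus being a circle, hence non-empty and connected — so, since $\P^2(\R)$ is connected, Remark~\ref{fibres-connexes} gives that $Y(\R)$ is non-empty and connected.

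For (d) I would exhibit the class $\alpha\coloneqq\pi^*(-1,q_1)\in\Br(\R(Y))$, where $(-1,q_1)$ denotes the quaternion symbol, $q_1$ being viewed as a rational function on $\P^2$ (the resulting class is $2$-torsion, hence independent of the dehomogenisation). It becomes trivial over $\C$, so $\alpha$ maps to $0$ in $\Br_{\on{nr}}(\C(Y)/\C)$. That $\alpha$ lies in $\Br_{\on{nr}}(\R(Y)/\R)=\Br(Z)$, for a resolution $Z\to Y$, follows from a residue computation: $(-1,q_1)$ ramifies on $\P^2$ only along $C_1$; the fibre of $\pi$ over the generic point of $C_1$ is the integral conic $\{s^2+v^2=0\}$, whose function field contains $\sqrt{-1}$, so the residue of $\alpha$ along $\pi^{-1}(C_1)$ is the class of $-1$ in a field in which $-1$ is a square, i.e.\ zero; and the remaining divisors of $Z$ where ramification could occur lie over the non-real nodes, so have residue fields containing $\C$, in which again $-1$ is a square. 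Finally, $\alpha$ is not in the image of $\Br(\R)=\{0,(-1,-1)\}$: it vanishes at every real point of $Y$ because $q_1>0$ there, whence $\alpha\neq(-1,-1)_{\R(Y)}$; and $\alpha\neq0$ because $\R(Y)$ is the function field of the conic over $\R(\P^2)$ attached to $(-1,q_1q_2)$, so by the theorem of Witt $\Ker[\Br(\R(\P^2))\to\Br(\R(Y))]=\{0,(-1,q_1q_2)\}$ — the symbol $(-1,q_1q_2)$ being ramified along $C_1$, hence non-zero — while $(-1,q_1)$ is ramified along $C_1$, hence non-zero, and cannot equal $(-1,q_1q_2)$, as that would force $(-1,q_2)=0$, contradicting ramification along $C_2$. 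Hence $\alpha$ witnesses the failure of surjectivity in (d).

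The step requiring genuine care is the unramifiedness of $\alpha$ on a resolution of $Y$, i.e.\ the vanishing of the residue of $(-1,q_1)$ along the divisors introduced over the four ordinary double points. This is exactly where the non-reality of the nodes is used — their residue fields contain $\C$, so a symbol $(-1,\cdot)$ carries no ramification there — and the argument parallels the corresponding verifications in \cite{colliot2024certaines} and \cite{benoist2020clemens}, from which property (d) could alternatively be deduced.
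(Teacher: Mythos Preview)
Your proof is correct and follows essentially the same route as the paper's: explicit computation of $\on{Sing}(Y)=\{s=v=0\}$ over $C_1\cap C_2$ for (a) and (c), the connected-fibres argument for (b), and the class $\pi^*(-1,q_1)$ for (d), with unramifiedness checked via the fact that the residue field along $\pi^{-1}(C_1)$ and at the exceptional divisors contains $\sqrt{-1}$. The only cosmetic difference is in showing that $\alpha$ is not in the image of $\Br(\R)$: the paper argues entirely by residues on $\P^2_\R$ (showing $(-1,p_1)\notin\Br(\R)$ and $(-1,p_1)\neq(-1,p_1p_2)$), whereas you separate the two cases and dispatch $\alpha\neq(-1,-1)_{\R(Y)}$ by evaluation at a real point, which is a pleasant shortcut.
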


 \begin{proof}
 Le lieu singulier de $Y$ est donn\'e par l'\'equation $q_{1}=q_{2}=s=v=0$ et donc consiste en un nombre fini de points ferm\'es non r\'eels. Ceci montre (a) et (c). 
 
 L'application $Y(\R) \to \P^2(\R)$ est surjective et ses fibres
 sont connexes. Ainsi $Y(\R)$ est connexe. Ceci donne (b).

Montrons (d). La $\C$-vari\'et\'e $Y_\C$ \'etant birationnelle \`a $s'v'= q_1q_2$, elle est rationnelle. Donc $Z_\C$ est aussi rationnelle et en particulier $\Br(Z_\C)=0$. 

Soient $h\in H^0(\P^2_\R,\mc{O}(1))$ et $p_i\coloneqq q_i/h^2\in \R(\P^2)$ pour $i=1,2$. Soit \[\alpha\coloneqq (-1,p_1)\in H^2(\R(\P^2), \Z/2).\] Par un calcul de r\'esidus sur $q_1=0$ et $q_2=0$, on v\'erifie que cette classe ne provient pas d'une classe dans $\Br(\R)$, et qu'elle n'est pas \'egale \`a la classe $(-1, p_1p_2)$.
 Puisque le noyau de l'application $H^2(\R(\P^2), \Z/2)\to H^2(\R(Z), \Z/2)$ est engendr\'e par la classe $(-1, p_1p_2)$, on d\'eduit que l'image $\beta$ de $\alpha$ dans $H^2(\R(Z), \Z/2)$ n'est pas nulle. Puisque $Z$ est lisse, pour montrer que l'on a $$\beta \in \Br (Z)[2]\subset H^2(\R(Z), \Z/2),$$ il suffit de montrer que $\beta$ est non ramifi\'ee \cite[Proposition 4.2.3(a)]{colliot1995birational}. 
 
 Soit $w$ une valuation discr\`ete de $\R(Y)=\R(Z)$, soit $\kappa(w)$ son corps r\'esiduel et soit $c_w$ le centre de $w$ sur $\P^2_\R$. On voit que si $c_w$ 
n'est pas  sur la conique $q_1=0$, alors le r\'esidu de $\beta$ est nul. Si $c_w$ est le point g\'en\'erique de $q_1=0$ et $w(p_{1})$ est pair, alors le r\'esidu de $\beta$ est nul. 
Si $c_w$ est le point g\'en\'erique de $q_1=0$ et $w(p_{1})$ est impair, alors on v\'erifie sur l'\'equation que $-1$ est un carr\'e dans $\kappa(w)$. Si $c_w$ est un point ferm\'e, alors $\kappa(w)=\C$, car la conique n'a pas de point r\'eel, et  $-1$ est encore un carr\'e dans $\kappa(w)$.
\end{proof}

Soit $f=f(x,y,z)$ un polynôme homog\`ene r\'eel de degr\'e $4$ d\'efinissant une courbe lisse dans $\P^2_\R$ et soit $F\coloneqq q_1q_2+tf\in H^0(\P^2_{\R[t]},\mc{O}(4))$. Soit $X$ le fibr\'e en coniques sur $\P^2_{\puiseux}$ donn\'e par l'\'equation
\[s^2+v^2=F(x,y,z).\]
Plus pr\'ecis\'ement, on pose $X\coloneqq Q_{\puiseux,F}$. Consid\'erons $\mc{X}\coloneqq Q_{\R[t],F}$ avec sa projection naturelle $\mc{X}\to \P^2_{\R[t]}$. On a $\mc{X}|_{t=0}=Y$ et $\mc{X}\times_{\R[t]}\puiseux=X$. Un calcul facile montre que la $\puiseux$-vari\'et\'e $X$ est lisse. Soit $C=\C\{\!\{t\}\!\}$. Alors un ouvert affine de $X_C$ est donn\'e par l'\'equation $s'v'=F(x,y,1)$, o\`u $s'=s+iv$ et $v'=s-iv$, ce qui entraîne la $C$-rationalit\'e de $X_C$. 

Le th\'eor\`eme \ref{generateur} et le lemme \ref{refBW} donnent ici: 

 \begin{thm}\label{prop-fibres-coniques}
La $R$-vari\'et\'e projective, lisse et g\'eom\'etriquement rationnelle
$X\coloneqq \mc{X}\times_{\R[t]}R$, o\`u $R$ est le corps des s\'eries de Puiseux r\'eelles en $t$, est un solide fibr\'e en coniques sur $\P^2_R$ tel que $X(R)$ soit semi-alg\'ebriquement connexe mais la vari\'et\'e $X$ n'est pas $CH_0$-triviale, et en particulier n'est pas stablement rationnelle.
\end{thm}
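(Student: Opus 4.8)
The plan is to deduce the theorem directly from Theorem~\ref{generateur}: one takes for the auxiliary singular threefold $Y$ the conic bundle $Q_{\R, q_1 q_2}$, and for the deformation the family $\mc{X} \coloneqq Q_{\R[t], F}$ with $F = q_1 q_2 + t f$, so that $\mc{X}|_{t=0} \cong Y$ by construction. The three further geometric assertions about $X$---smoothness, the conic bundle structure over $\P^2_{\puiseux}$, and geometric rationality---have essentially been isolated already in the discussion preceding the statement, and it remains only to assemble the pieces.

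First I would invoke Lemma~\ref{refBW}, which says precisely that $Y = Q_{\R, q_1 q_2}$ satisfies hypotheses (a)--(d) of Theorem~\ref{generateur}, using that $q_1, q_2$ are positive quadratic forms cutting out smooth transverse conics in $\P^2_\R$. Next I would check the hypotheses of Theorem~\ref{generateur} on the family. The morphism $\mc{X} = Q_{\R[t], F} \to \Spec(\R[t]) = \A^1_\R$ is projective, because $\mc{X}$ is a hypersurface in the projective bundle $\P = \P_{\P^2_{\R[t]}}(\mc{O}(-2) \oplus \mc{O}(-2) \oplus \mc{O})$ over $\P^2_{\R[t]}$ (here $\deg F = 4$, hence $d = 2$), and its fibre over $t = 0$ is $Y$. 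Its generic fibre over $\R(t)$ is smooth: on the standard affine charts of $\P$ over $\P^2$, one checks that the singular locus of a conic bundle $\{s^2 + v^2 = u^2 G\}$ is cut out by $s = v = 0$ together with the singular locus of $\{G = 0\} \subset \P^2$; since $f$ defines a smooth plane quartic, the discriminant of the pencil of quartics $\lambda q_1 q_2 + \mu f$ is a nonzero binary form in $(\lambda, \mu)$ (it is nonzero at $\lambda = 0$, where the member of the pencil is $\{f = 0\}$), so $F = q_1 q_2 + t f$ defines a smooth quartic over $\R(t)$ because $t$ is transcendental over $\R$. Theorem~\ref{generateur} then yields at once that $X(\puiseux)$ is semi-algebraically connected and that $X$ is not $CH_0$-trivial, hence not retract rational, and in particular not stably rational.

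It remains to describe $X = Q_{\puiseux, F}$ geometrically. The same local computation, carried out now over the real closed field $\puiseux$ (in which $t$ is again transcendental over $\R$), shows that $\{F = 0\} \subset \P^2_{\puiseux}$ is smooth and hence that $X$ is smooth---this is the elementary verification announced just before the statement. The projection $p \colon X \to \P^2_{\puiseux}$ exhibits $X$ as a conic bundle, with fibres the plane conics $\{s^2 + v^2 = u^2 F(x,y,z)\}$, so $X$ is a threefold. Finally, write $C \coloneqq \C\{\!\{t\}\!\}$, the algebraic closure of $\puiseux$; over $C$, on the affine chart $u = z = 1$, the linear substitution $s' = s + i v$, $v' = s - i v$ turns the equation of $X_C$ into $s' v' = F(x, y, 1)$, which is visibly $C$-rational (project onto the coordinates $v', x, y$). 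Hence $X_C$ is rational, i.e.\ $X$ is geometrically rational.

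I do not expect a genuine obstacle in this argument: the real-analytic ingredient---the semi-algebraic Ehresmann theorem of Coste--Shiota, or Scheiderer's real \'etale cohomology---and the arithmetic ingredient---the specialisation method of \S\ref{parasp} applied to the unramified quaternion class $(-1, p_1)$ on a desingularisation of $Y$, whose non-triviality rests on the residue computation in Lemma~\ref{refBW}---are already packaged inside Theorem~\ref{generateur} and Lemma~\ref{refBW}. The only point that is not purely formal is the smoothness of the generic fibre of $\mc{X}$ and of $X$ itself; should a particular choice of $f$ fail to keep these smooth, it suffices to replace $f$ by a general quartic.
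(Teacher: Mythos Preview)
Your proposal is correct and follows essentially the same approach as the paper: invoke Lemma~\ref{refBW} for hypotheses (a)--(d) on $Y=Q_{\R,q_1q_2}$, then apply Theorem~\ref{generateur} to the family $\mc{X}=Q_{\R[t],F}$, with the smoothness and geometric rationality of $X$ verified exactly as in the paragraph preceding the statement. The only difference is that you spell out the smoothness of the generic fibre via the discriminant of the pencil $\lambda q_1q_2+\mu f$, whereas the paper dismisses this as ``un calcul facile''.
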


\section{La m\'ethode de sp\'ecialisation revisit\'ee, II}\label{methode-2}

\begin{lemma}\label{lemme-phi-isomorphisme-variante} 
 On se place dans la situation \ref{situation-phi} et on suppose de plus que les homomorphismes $p_*\colon CH_0(T_{E})\to CH_0(S_{E})$ et $p_*\colon CH_0(T_{E'})\to CH_0(S_{E'})$ sont des isomorphismes. Alors l'application $\Phi$ est un isomorphisme.
 \end{lemma}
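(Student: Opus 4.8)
L'approche consiste à déduire l'énoncé de l'affirmation suivante : pour $F\in\{E,E'\}$, l'image directe $p_*\colon CH_0(Z_F)\to CH_0(Y_F)$ est un isomorphisme. Cela admis, $\Phi$ est un isomorphisme pour des raisons formelles : c'est l'application induite par $p_*\colon CH_0(Z_E)\to CH_0(Y_E)$ sur les quotients par les images des flèches normes $N_{E'/E}$, et comme $p_*\colon CH_0(Z_{E'})\to CH_0(Y_{E'})$ est lui aussi un isomorphisme, il envoie $N_{E'/E}(CH_0(Z_{E'}))$ sur $N_{E'/E}(CH_0(Y_{E'}))$, de sorte que l'application induite sur les conoyaux est bijective.

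Pour montrer que $p_*\colon CH_0(Z_F)\to CH_0(Y_F)$ est un isomorphisme, on raisonne, pour chaque corps $F$ fixé, comme dans la preuve du lemme~\ref{lemme-phi-isomorphisme}. En posant $V\coloneqq p^{-1}(U)$, on dispose du diagramme commutatif à lignes exactes
\[
\begin{tikzcd}
CH_0(T_F) \arrow[r] \arrow[d,"p_*"] & CH_0(Z_F) \arrow[r] \arrow[d,"p_*"] & CH_0(V_F) \arrow[r] \arrow[d,"\wr"',"p_*"] & 0 \\
CH_0(S_F) \arrow[r] & CH_0(Y_F) \arrow[r] & CH_0(U_F) \arrow[r] & 0
\end{tikzcd}
\]
où la flèche verticale de droite est un isomorphisme puisque $p\colon V\to U$ l'est, et celle de gauche l'est par hypothèse. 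La surjectivité de la flèche du milieu résulte d'une chasse au diagramme : l'image dans $CH_0(U_F)$ d'une classe $y\in CH_0(Y_F)$ se relève, via l'isomorphisme $CH_0(V_F)\xrightarrow{\sim}CH_0(U_F)$ et la surjection $CH_0(Z_F)\twoheadrightarrow CH_0(V_F)$, en une classe $z\in CH_0(Z_F)$ ; alors $y-p_*z$ devient nulle dans $CH_0(U_F)$, donc est de la forme $i_*(\eta)$ avec $\eta\in CH_0(S_F)$, et en écrivant $\eta=p_*(\theta)$ avec $\theta\in CH_0(T_F)$ (grâce à l'isomorphisme de gauche) on obtient $y=p_*(z+i_*\theta)$.

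L'obstacle principal est l'injectivité. En utilisant que $p_*\colon CH_0(V_F)\to CH_0(U_F)$ est injective, on voit d'abord que $\Ker\bigl(p_*\colon CH_0(Z_F)\to CH_0(Y_F)\bigr)$ est contenu dans l'image de $i_*\colon CH_0(T_F)\to CH_0(Z_F)$ ; si $i_*(\theta)$ appartient à ce noyau, alors $i_*(p_*\theta)=p_*(i_*\theta)=0$, donc $p_*(\theta)$ appartient à $\Ker\bigl(i_*\colon CH_0(S_F)\to CH_0(Y_F)\bigr)$. Le point clé — et la difficulté principale — est que l'isomorphisme $p_*\colon CH_0(T_F)\xrightarrow{\sim}CH_0(S_F)$ identifie $\Ker\bigl(i_*\colon CH_0(T_F)\to CH_0(Z_F)\bigr)$ à $\Ker\bigl(i_*\colon CH_0(S_F)\to CH_0(Y_F)\bigr)$ : une inclusion est immédiate par commutativité, et pour l'autre, étant donné un zéro-cycle $\eta$ supporté par $S_F$ et rationnellement équivalent à zéro sur $Y_F$, on l'écrit $\eta=\sum_j \operatorname{div}_{\overline{W_j}}(f_j)$ avec $W_j\subset Y_F$ des courbes non contenues dans $S_F$ (on peut écarter les diviseurs principaux des courbes contenues dans $S_F$), on prend les transformées strictes $W'_j\subset Z_F$ et les mêmes fonctions $f_j\in\kappa(W_j)^\times=\kappa(W'_j)^\times$, et l'on vérifie, en utilisant l'isomorphisme $V_F\cong U_F$, que $\theta\coloneqq\sum_j\operatorname{div}_{\overline{W'_j}}(f_j)$ est supporté par $T_F$, est rationnellement équivalent à zéro sur $Z_F$, et a pour image $\eta$ par $p_*$ ; on a donc $(p_*)^{-1}[\eta]=[\theta]\in\Ker\bigl(i_*\colon CH_0(T_F)\to CH_0(Z_F)\bigr)$. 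Revenant à l'argument d'injectivité, on en déduit que $\theta$ appartient à $\Ker\bigl(i_*\colon CH_0(T_F)\to CH_0(Z_F)\bigr)$, d'où $i_*(\theta)=0$ ; ainsi $p_*\colon CH_0(Z_F)\to CH_0(Y_F)$ est injective. Ceci valant pour $F=E$ et $F=E'$, on conclut, comme expliqué au début, que $\Phi$ est un isomorphisme.
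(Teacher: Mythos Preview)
Your proof is correct and reaches the same intermediate statement as the paper: that $p_*\colon CH_0(Z_F)\to CH_0(Y_F)$ is an isomorphism for $F\in\{E,E'\}$, from which the claim on $\Phi$ follows formally. The route to injectivity, however, is genuinely different. The paper extends the localisation sequence one step to the left using higher Chow groups $CH_1(-,1)$ (equivalently, Rost's cycle modules), obtaining a four-term sequence $CH_1(U_F,1)\to CH_0(S_F)\to CH_0(Y_F)\to CH_0(U_F)\to 0$ and similarly for $Z_F$; since $p\colon p^{-1}(U)\to U$ is an isomorphism, the five lemma applies directly.

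You bypass this machinery and prove by hand that $p_*$ identifies $\Ker\bigl(CH_0(T_F)\to CH_0(Z_F)\bigr)$ with $\Ker\bigl(CH_0(S_F)\to CH_0(Y_F)\bigr)$, lifting a rational equivalence $\eta=\sum_j\operatorname{div}_{W_j}(f_j)$ to $Z_F$ via strict transforms of the curves $W_j$. This is precisely the content of the surjectivity of $p_*\colon CH_1(V_F,1)\to CH_1(U_F,1)$ together with the compatibility of boundary maps, unwound explicitly. Your approach is more elementary, avoiding any reference to higher Chow groups, at the price of a slightly longer argument; the paper's approach is cleaner and more systematic once one accepts the formalism. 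Both are valid; the equivalence of the two viewpoints is itself instructive. One small remark: you reuse the letter $\theta$ for two different objects (the lift of $z$ and the cycle constructed from strict transforms), which momentarily obscures the logic, though the argument is sound.
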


\begin{proof}
Consid\'erons le diagramme commutatif
\begin{displaymath}
\begin{adjustbox}{max width=\textwidth}
\begin{tikzcd}[row sep=2em, column sep=3em]
CH_1(p^{-1}(U)_{E'},1) \arrow[d,"N_{E'/E}"] \arrow[ddd, bend left=66,"\wr"', "p_*"] \arrow[r] & 
CH_{0}(T_{E'}) \arrow[r] \arrow[d,"N_{E'/E}"] \arrow[ddd, bend left=66,"\wr"', "p_*"] & 
CH_{0}(Z_{E'}) \arrow[r] \arrow[d,"N_{E'/E}"] \arrow[ddd, bend left=66,"p_*"] & 
CH_{0}(p^{-1}(U)_{E'}) \arrow[r] \arrow[d,"N_{E'/E}"] \arrow[ddd, bend left=66,"\wr"', "p_*"] & 
0 \\
CH_1(p^{-1}(U)_E,1) \arrow[r] \arrow[d,"\wr"', "p_*"] & 
CH_{0}(T_{E}) \arrow[r] \arrow[d,"\wr"', "p_*"] & 
CH_{0}(Z_{E}) \arrow[r] \arrow[d,"p_*"] & 
CH_{0}(p^{-1}(U)_{E}) \arrow[r] \arrow[d,"\wr"', "p_*"] & 
0 \\
CH_1(U_{E},1) \arrow[r] & 
CH_{0}(S_{E}) \arrow[r] & 
CH_{0}(Y_{E}) \arrow[r] & 
CH_{0}(U_{E}) \arrow[r] & 
0 \\
CH_1(U_{E'},1) \arrow[r] \arrow[u,swap,"N_{E'/E}"] & 
CH_{0}(S_{E'}) \arrow[r] \arrow[u,swap,"N_{E'/E}"] & 
CH_{0}(Y_{E'}) \arrow[r] \arrow[u,swap,"N_{E'/E}"] & 
CH_{0}(U_{E'}) \arrow[r] \arrow[u,swap,"N_{E'/E}"] & 
0
\end{tikzcd}
\end{adjustbox}
\end{displaymath}
o\`u les complexes horizontaux sont exacts et o\`u les applications $p_*\colon CH_0(T_E)\to CH_0(S_E)$ et $p_*\colon CH_0(T_{E'})\to CH_0(S_{E'})$ sont des isomorphismes par hypoth\`ese. Ici $CH_1(-,1)$ d\'esigne le groupe de Chow sup\'erieur au sens de Bloch \cite{bloch1986algebraic}; voir \cite[Proposition 1.3]{bloch1986algebraic} pour la covariance par morphismes propres et \cite[Theorem 3.1]{bloch1986algebraic} pour la suite de localisation. (On aurait pu utiliser le formalisme de Rost \cite{rost1996chow}: voir \cite[(3.4) et p. 365]{rost1996chow} pour la covariance par morphismes propres, et voir \cite[p. 356]{rost1996chow} pour la suite de localisation.) On en d\'eduit  que dans le carr\'e commutatif
\[
\begin{tikzcd}
    CH_0(Z_{E'}) \arrow[d,"\wr"', "p_*"] \arrow[r,"N_{E'/E}"] & CH_0(Z_E) \arrow[d,"\wr"', "p_*"] \\
    CH_0(Y_{E'}) \arrow[r,"N_{E'/E}"]  & CH_0(Y_E)
\end{tikzcd}
\]
les fl\`eches verticales sont des isomorphismes, ce qui entraîne que l'application
\[\Phi\colon  CH_{0}(Z_{E})/ N_{E'/E}(CH_{0}(Z_{E'})) \to CH_{0}(Y_{E})/N_{E'/E}(CH_{0}(Y_{E'}))\]
est un isomorphisme.
\end{proof}

 \begin{thm}\label{generateur-variante}
Soit $Y$ une $\R$-vari\'et\'e projective et g\'eom\'etriquement int\`egre. Soit $S\subset Y$ le lieu singulier de $Y$, avec structure r\'eduite. Soit $U\coloneqq Y\setminus S$ l'ouvert compl\'ementaire de $S$, soit $p\colon Z\to Y$ une r\'esolution des singularit\'es telle que la restriction $p^{-1}(U)\to U$ soit un isomorphisme, et soit $T\coloneqq p^{-1}(S)$. Soit $i\geq 0$ un entier. Supposons:

\begin{enumerate}
    \item[(a)] les applications 
    \[p_*\colon CH_0(T_{\R(Y)})\to CH_0(S_{\R(Y)}),\qquad p_*\colon CH_0(T_{\C(Y)})\to CH_0(S_{\C(Y)})\] sont des isomorphismes;
    \item[(b)] l'espace topologique $Y(\R)$ est connexe;
    \item[(c)] l'application
\[H^i(\R,\Z/2) \to \Ker [H^i_{\on{nr}}(\R(Y)/\R,\Z/2) \to H^i_{\on{nr}}(\C(Y)/\C,\Z/2)]\] n'est pas surjective.
\end{enumerate}

Soit $\A^1_\R=\Spec(\R[t])$, soit $U\subset \A^1_\R$ un voisinage Zariski de $0$, soit $f\colon \mc{X}\to U$ un morphisme projectif de $\R$-vari\'et\'es, \`a fibre g\'en\'erique lisse, tel que $\mc{X}|_{t=0}\cong Y$. Alors, si $\eta\colon \Spec(\puiseux)\to\A^1_\R$ est le morphisme correspondant \`a l'inclusion $\R[t]\subset \puiseux$, la $\puiseux$-vari\'et\'e projective lisse $X\coloneqq \mc{X}\times_{U,\eta}\Spec(\puiseux)$ satisfait les propri\'et\'es suivantes.

\begin{enumerate}
    \item[(i)] L'espace semi-alg\'ebrique $X(\puiseux)$ est semi-alg\'ebriquement connexe.
    \item[(ii)] La $\puiseux$-vari\'et\'e $X$ n'est pas $CH_0$-triviale, et en particulier
n'est pas stablement rationnelle.
\end{enumerate} 
\end{thm}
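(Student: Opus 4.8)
Le plan est de d\'eduire le th\'eor\`eme \ref{generateur-variante} du crit\`ere de connexit\'e du \S\ref{ehres} et du th\'eor\`eme de sp\'ecialisation revisit\'e du \S\ref{parasp}, en suivant pas \`a pas la preuve du th\'eor\`eme \ref{generateur} : la seule modification est que l'injectivit\'e (en fait la bijectivit\'e) de l'application de comparaison $\Phi$ de (\ref{application-phi}) sera fournie cette fois par le lemme \ref{lemme-phi-isomorphisme-variante}, au moyen de l'hypoth\`ese (a), et non par le lemme \ref{lemme-phi-isomorphisme}. Pour la partie (i) : d'apr\`es le th\'eor\`eme \ref{critere-connexe} (appliqu\'e au morphisme $f$, que l'on peut supposer fid\`element plat quitte \`a r\'etr\'ecir $U$ autour de $0$) et l'hypoth\`ese que $Y(\R)$ est non vide et contenu dans le lieu lisse de $Y$ (comme dans le th\'eor\`eme \ref{generateur}), le nombre de composantes semi-alg\'ebriquement connexes de $X(\puiseux)$ \'egale celui de $Y(\R)$, lequel vaut $1$ d'apr\`es (b); ainsi $X(\puiseux)$ est semi-alg\'ebriquement connexe.

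Pour la partie (ii), on raisonne comme dans la preuve du th\'eor\`eme \ref{generateur}(ii). Comme $\puiseux=\bigcup_{n\geq 1}\R(\!(t^{1/n})\!)$ est une r\'eunion filtrante de corps, si la $\puiseux$-vari\'et\'e projective et lisse $X$ \'etait $CH_0$-triviale, alors, par la forme ``d\'ecomposition de la diagonale'' de la $CH_0$-trivialit\'e \cite[Proposition 1.4]{colliot2016hypersurfaces}, il existerait un entier $n\geq 1$ tel que la vari\'et\'e $\mc{X}\times_U\R(\!(t^{1/n})\!)$ soit $CH_0$-triviale. Il suffit donc de montrer que, pour tout entier $n\geq 1$, la vari\'et\'e $\mc{X}\times_U\R(\!(t^{1/n})\!)$ n'est pas $CH_0$-triviale. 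Fixons un tel $n$, posons $A\coloneqq\R[\![t^{1/n}]\!]$ et $\mc{X}'\coloneqq\mc{X}\times_U A$, de sorte que $\pi\colon\mc{X}'\to\Spec(A)$ est projectif, fid\`element plat, \`a fibres g\'eom\'etriquement int\`egres, de fibre sp\'eciale $\mc{X}'\times_A\R\cong Y$ et de fibre g\'en\'erique $\mc{X}_{\R(\!(t^{1/n})\!)}$. D'apr\`es (b), l'ensemble $Y(\R)$ est non vide et rencontre le lieu lisse de $Y$; on choisit un point r\'eel lisse $y\in Y(\R)$, que le lemme de Hensel rel\`eve en une section $\sigma$ de $\pi$ induisant le $\R$-point lisse $b\coloneqq y$ sur $Y$.

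Il reste \`a v\'erifier les hypoth\`eses (i) et (ii) du th\'eor\`eme \ref{1.14K} avec $k=\R$, $L=\C$, $K=\R(\!(t^{1/n})\!)$, $M=\mu_2$ et l'entier $i$ donn\'e; dans les notations de la situation \ref{situation-phi}, on a $E=\R(Y)$ et $E'=\C(Y)\cong E\otimes_\R\C$. L'hypoth\`ese (i), c'est-\`a-dire l'injectivit\'e de $\Phi$, r\'esulte du lemme \ref{lemme-phi-isomorphisme-variante}, dont l'hypoth\`ese n'est autre que l'hypoth\`ese (a), \`a savoir que $p_*\colon CH_0(T_E)\to CH_0(S_E)$ et $p_*\colon CH_0(T_{E'})\to CH_0(S_{E'})$ soient des isomorphismes. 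Quant \`a l'hypoth\`ese (ii) : le $G_\R$-module $\mu_2$ est le module trivial $\Z/2$, et $H^i(\C,\mu_2)=0$ pour $i\geq 1$, de sorte que $\Ker[H^i(\R,\mu_2)\to H^i(\C,\mu_2)]=H^i(\R,\Z/2)$ et que la fl\`eche de l'hypoth\`ese (ii) du th\'eor\`eme \ref{1.14K} s'identifie \`a la fl\`eche
\[H^i(\R,\Z/2)\to\Ker\big[H^i_{\on{nr}}(\R(Y)/\R,\Z/2)\to H^i_{\on{nr}}(\C(Y)/\C,\Z/2)\big],\]
laquelle n'est pas surjective par (c) (et (c) force $i\geq 1$, le cas $i=0$ \'etant exclu). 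Le th\'eor\`eme \ref{1.14K} donne alors la non $CH_0$-trivialit\'e de $\mc{X}_{\R(\!(t^{1/n})\!)}$, fibre g\'en\'erique de $\pi$; comme ceci vaut pour tout $n\geq 1$, la $\puiseux$-vari\'et\'e $X$ n'est pas $CH_0$-triviale, donc n'est pas r\'etractilement rationnelle, et en particulier n'est pas stablement rationnelle. \`A proprement parler il n'y a pas d'obstacle \`a surmonter : le th\'eor\`eme \ref{generateur-variante} est la combinaison formelle des th\'eor\`emes \ref{critere-connexe} et \ref{1.14K} avec le lemme \ref{lemme-phi-isomorphisme-variante} \`a la place du lemme \ref{lemme-phi-isomorphisme} (qui s'appliquait lorsque $S$ \'etait d\'efini sur $\C$). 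Les seuls points \`a garder \`a l'esprit sont que l'emploi du th\'eor\`eme \ref{critere-connexe} requiert que $Y(\R)$ soit contenu dans le lieu lisse de $Y$, et que la descente de la $CH_0$-trivialit\'e de $\puiseux$ \`a un corps $\R(\!(t^{1/n})\!)$ passe par \cite[Proposition 1.4]{colliot2016hypersurfaces}.
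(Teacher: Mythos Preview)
Your proof is correct and takes the same approach as the paper, whose own proof is the single sentence ``follow the proof of Th\'eor\`eme \ref{generateur}, replacing Lemme \ref{lemme-phi-isomorphisme} by Lemme \ref{lemme-phi-isomorphisme-variante}''. Your observation that applying Th\'eor\`eme \ref{critere-connexe} tacitly requires $Y(\R)$ to lie in the smooth locus of $Y$ is apt: this hypothesis is not listed in the statement of Th\'eor\`eme \ref{generateur-variante}, though it is verified in all the applications (Lemmes \ref{fibration-quadriques-a-c} and \ref{int-2-quadriques-a-c}).
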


 \begin{proof}
     On suit la preuve du th\'eor\`eme \ref{generateur}, en remplaçant le lemme \ref{lemme-phi-isomorphisme} par le lemme \ref{lemme-phi-isomorphisme-variante}. 
 \end{proof}

\begin{rmk}\label{fibre-par-fibre}
    On rappelle une condition \enquote{fibre par fibre} qui entraîne l'hypoth\`ese  du lemme \ref{lemme-phi-isomorphisme-variante} (et donc, pour $k=\R$, l'hypoth\`ese (a)  du th\'eor\`eme \ref{generateur-variante}) et qui sera utilis\'ee dans la suite. Soient $V$ et $W$ deux vari\'et\'es sur un corps $k$ et soit $f\colon V\to W$ un $k$-morphisme propre. On suppose que, pour tout point $P$ du sch\'ema $W$, de corps r\'esiduel $k(P)$, la fibre $V_P$ est une $k(P)$-vari\'et\'e $CH_0$-triviale. D'apr\`es \cite[Proposition 1.8]{colliot2016hypersurfaces}, pour tout corps $F$ contenant $k$, l'application $f_*\colon CH_0(V_F)\to CH_0(W_F)$ est un isomorphisme.
\end{rmk}

\section{Exemples en dimension sup\'erieure}\label{dimsup}

\subsection{Le mod\`ele singulier} Soit $\P^1_\R=\on{Proj}(\R[u_0,u_1])$, vu comme recollement de $\A^1_\R$ avec coordonn\'ee $u\coloneqq u_0/u_1$ et de $\A^1_\R$ avec coordonn\'ee $v\coloneqq u_0/u_1$ au moyen du changement de variable $u=1/v$ sur $\P^1_\R\setminus\{0,\infty\}$. Soit $Y\subset \P^1_\R\times \P^7_\R$ la vari\'et\'e d\'efinie par l'\'equation
\[X_0^2+X_1^2+X_2^2 + (u_0^2+u_1^2)X_3^2 - u_0u_1(X_4^2+X_5^2+X_6^2+X_7^2) =0,\]
o\`u $X_0,\dots,X_7$ sont des cordonn\'ees sur $\P^7_\R$. La projection $Y\to \P^1_\R$ est une fibration en quadriques de dimension relative $6$. Un mod\`ele affine de $Y$ est donn\'e par l'\'equation
\begin{equation} \label{eq:affine}
x^2+y^2+z^2 + (1+u^2) - u(x'^2+y'^2+z'^2+t'^2) = 0.
\end{equation}
 
\begin{lemma}\label{deuxieme-modele}
L'espace topologique $Y(\R)$ est connexe et l'application 
\[H^3(\R,\Z/2)\to \on{Ker}[H^3_{\on{nr}}(\R(Y),\Z/2)\to H^3_{\on{nr}}(\C(Y),\Z/2)]\] n'est pas surjective.
\end{lemma}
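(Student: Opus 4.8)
\emph{Connexit\'e.} Le plan est d'utiliser la fibration en quadriques $\pi\colon Y\to\P^1_\R$. Dans la coordonn\'ee affine $u$, la fibre g\'en\'erique de $\pi$ est la quadrique $Q_\phi\subset\P^7_{\R(u)}$ associ\'ee \`a la forme quadratique
\[\phi\coloneqq\langle 1,1,1,1+u^2,-u,-u,-u,-u\rangle;\]
pour $u>0$ elle est de signature $(4,4)$, donc lisse \`a lieu r\'eel connexe ; au-dessus de $u=0$ la fibre est $\{X_0^2+X_1^2+X_2^2+X_3^2=0\}$, de lieu r\'eel le sous-espace lin\'eaire $\{X_0=X_1=X_2=X_3=0\}$, et au-dessus de $u=\infty$ on v\'erifie de m\^eme que le lieu r\'eel de la fibre est un sous-espace lin\'eaire non vide ; enfin, au-dessus de tout $u<0$ la forme $\phi$ est d\'efinie positive, de sorte que la fibre r\'eelle est vide. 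Ainsi $\pi(Y(\R))=[0,\infty]\subset\P^1(\R)$, un intervalle ferm\'e, toutes les fibres de l'application surjective $Y(\R)\to[0,\infty]$ sont semi-alg\'ebriquement connexes, et cette application est ferm\'ee car $\pi$ est projectif ; la remarque \ref{fibres-connexes} entra\^ine que $Y(\R)$ est semi-alg\'ebriquement connexe, donc connexe par \cite[Th\'eor\`eme 2.4.5]{bochnak1987geometrie}.

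\emph{Le choix de la classe.} Pour la seconde assertion, le plan est d'exhiber une classe convenable de $H^3(\R(u),\Z/2)$ et de montrer que son image dans $H^3(\R(Y),\Z/2)$, o\`u $\R(Y)=\R(u)(Q_\phi)$, r\'epond \`a la question. Un calcul dans l'anneau de Witt de $\R(u)$ donne $\phi=\langle\langle u,-1,-1\rangle\rangle-\langle\langle 1+u^2\rangle\rangle$, ce qui conduit \`a poser $\alpha\coloneqq(-1,-1,u)\in H^3(\R(u),\Z/2)$, l'invariant d'Arason de la $3$-forme de Pfister $\langle\langle u,-1,-1\rangle\rangle$. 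Comme $-1$ est un carr\'e dans $\C$, on a $\alpha_{\C(u)}=0$, d'o\`u $\alpha_{\C(Y)}=0$. D'autre part les formes $\langle\langle u,-1,-1\rangle\rangle$ et $\langle\langle -u,-1,-1\rangle\rangle$ sont anisotropes sur $\R(u)$ (car $\pm u$ n'est pas totalement positif dans $\R(u)$, donc pas repr\'esent\'e par la forme d\'efinie $\langle 1,1,1,1\rangle$), et aucune d'elles n'est semblable \`a $\phi$, le discriminant de $\phi$ valant $1+u^2$, qui n'est pas un carr\'e dans $\R(u)$, alors que celui d'une $3$-forme de Pfister est trivial. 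Par la th\'eorie des formes de Pfister et des corps de fonctions de quadriques \cite[Chapter X]{lam2005introduction}, \cite[Chapitre 2]{kahn2008formes}, aucune des deux ne devient isotrope, a fortiori hyperbolique, sur $\R(Y)$ ; autrement dit $(-1,-1,u)$ et $(-1,-1,-u)=(-1,-1,u)-(-1,-1,-1)$ sont toutes deux non nulles dans $H^3(\R(Y),\Z/2)$, ce qui montre que $\alpha_{\R(Y)}$ n'appartient pas \`a l'image de $H^3(\R,\Z/2)=\{0,(-1,-1,-1)\}$.

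\emph{La non-ramification.} Il reste le point central, qui est aussi la partie la plus d\'elicate : montrer que $\alpha_{\R(Y)}$ est non ramifi\'ee. Il suffit pour cela d'annuler les r\'esidus de $(-1,-1,u)$ le long des diviseurs premiers d'un mod\`ele projectif et lisse $Z$ de $Y$ (voir \cite[\S 4]{colliot1995birational}). Comme $\partial_v(-1,-1,u)=v(u)\cdot(-1,-1)$ pour toute valuation discr\`ete $v$ de $\R(Y)$, seuls peuvent contribuer les diviseurs au-dessus de $u=0$ et de $u=\infty$, et pour un tel diviseur $D$ le r\'esidu $v_D(u)\cdot(-1,-1)_{\R(D)}\in H^2(\R(D),\Z/2)$ est nul d\`es que $v_D(u)$ est pair ou que $-1$ est somme d'au plus trois carr\'es dans $\R(D)$. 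C'est imm\'ediat pour la transform\'ee stricte de $Y|_{u=0}=\{X_0^2+X_1^2+X_2^2+X_3^2=0\}$, o\`u $-1=(X_0/X_3)^2+(X_1/X_3)^2+(X_2/X_3)^2$, et pour celle de la fibre r\'eduite au-dessus de $u=\infty$, qui appara\^it dans $\pi^*(\infty)$ avec multiplicit\'e paire. La partie d\'elicate, annonc\'ee dans l'introduction, est d'effectuer une r\'esolution explicite du lieu singulier de $Y$, lequel n'est pas un ensemble fini mais est enti\`erement contenu dans les fibres au-dessus de $u=0$ et $u=\infty$, et de v\'erifier l'alternative ci-dessus pour chacun des diviseurs exceptionnels obtenus. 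Au-dessus de $u=0$, et pour la plus grande partie du lieu singulier au-dessus de $u=\infty$, ce lieu est port\'e par des sous-vari\'et\'es du type $\{X_4^2+\dots+X_7^2=0\}$, sur lesquelles $-1$ reste somme de carr\'es, ce qui annule le r\'esidu ; il reste les diviseurs exceptionnels situ\'es au-dessus du $\P^2$ form\'e des points de la fibre $u=\infty$ o\`u $X_3=X_4=\dots=X_7=0$, pour lesquels on utilise notamment que le c\^one tangent de $Y$ le long de ce $\P^2$ est de la forme $\{v^2+X_3^2=0\}$ (avec $v\coloneqq u^{-1}$), ce qui force $\sqrt{-1}$ \`a appartenir aux corps r\'esiduels $\R(E)$ concern\'es et donne $(-1,-1)_{\R(E)}=0$. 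Cette comptabilit\'e de multiplicit\'es et de niveaux est l'obstacle principal. Une fois acquise, $\alpha_{\R(Y)}$ est non ramifi\'ee : elle appartient \`a $H^3_{\on{nr}}(\R(Y),\Z/2)$, s'envoie sur $0$ dans $H^3_{\on{nr}}(\C(Y),\Z/2)$ et n'est pas dans l'image de $H^3(\R,\Z/2)$, ce qui est l'\'enonc\'e.
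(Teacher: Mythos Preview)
Your arguments for the connexity of $Y(\R)$ and for showing that the image of $\alpha$ in $H^3(\R(Y),\Z/2)$ does not lie in the image of $H^3(\R,\Z/2)$ are correct and essentially parallel to the paper's (the paper invokes Arason's injectivity result \cite[Satz~5.6]{arason1975cohomologische} for $H^3(\R(u),\Z/2)\to H^3(\R(u)(q),\Z/2)$ together with the observation that $(-1,-1,u)$ is ramified on $\R(u)$, which is equivalent to your Pfister--subform argument).

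The gap is in the non-ramification step. Your strategy of passing to a smooth projective model $Z$ and checking residues at its codimension-one points is legitimate in principle, but the execution is both incomplete and contains errors. First, the fibre of $Y\subset\P^1_\R\times\P^7_\R$ over $u=\infty$ is the \emph{reduced} quadric $\{X_0^2+X_1^2+X_2^2+X_3^2=0\}$, identical to the fibre over $u=0$; it does not appear in $\pi^*(\infty)$ with even multiplicity (one has $v_D(u)=-1$ along it). The residue there vanishes for the same reason as at $u=0$, namely that $-1$ is a sum of three squares in $\R(D)$, not for the reason you give. Second, your description of the singular locus of $Y$ over $u=\infty$ is incorrect: computing the Jacobian shows it is $\{X_0=X_1=X_2=X_3=0,\ X_4^2+\dots+X_7^2=0\}$, a quadric surface, and not a $\P^2$ with $X_3=\dots=X_7=0$; the tangent-cone claim $\{v^2+X_3^2=0\}$ is accordingly unfounded. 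Since you explicitly defer the ``partie la plus d\'elicate'' (the actual resolution and the bookkeeping of valuations along the exceptional divisors) and the portions you do sketch are inaccurate, the non-ramification is not established.

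The paper's proof avoids all of this with a direct valuation-theoretic argument that needs no model. For \emph{any} discrete valuation $v$ of $\R(Y)$ whose residue field $\kappa$ satisfies $(-1,-1)_\kappa\neq 0$, one reads off from the affine equation
\[x^2+y^2+z^2+(1+u^2)=u\,(x'^2+y'^2+z'^2+t'^2)\]
that $v(u)$ must be even: a nonzero sum of four squares in the completion has even valuation (since $\langle 1,1,1,1\rangle$ is anisotropic over $\kappa$), while $1+u^2$ is a square in the completion whenever $v(u)\neq 0$, and these two facts together with the equation are incompatible with $v(u)$ odd. Hence $\partial_v(-1,-1,u)=0$ for every $v$, without resolving any singularity.
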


 \begin{proof}
L'image de $Y(\R) \to \P^1(\R)$ est l'intervalle $u\geq 0$, et pour tout $u\geq 0$ la fibre $Y_u(\R)$ est connexe. Donc $Y(\R)$ est connexe.

Soit $q$ la forme quadratique
$\langle 1,1,1,1+u^2, -u,-u,-u,-u\rangle$ sur le corps $\R(u)$.
La forme $q$ est de rang $8$ et  n'est pas semblable \`a une voisine
d'une forme  de Pfister, puisque son d\'eterminant
n'est pas un carr\'e.

D'apr\`es Arason \cite[Satz 5.6]{arason1975cohomologische} (voir aussi \cite[Corollaire 9.6.2]{kahn2008formes}), pour une telle forme quadratique $q$,
l'application $H^3(\R(u),\Z/2)  \to H^3(\R(u)(q),\Z/2)$
 est injective.
 
Soit $\R\subset A\subset \R(Y)$ un anneau de valuation discr\`ete
de corps des fractions $\R(Y)$  et de corps r\'esiduel $\kappa$.
Soit $v$ la valuation associ\'ee \`a $A$. 

Supposons
$(-1,-1)_{\kappa} \neq 0$, c'est-\`a-dire que $-1$ n'est pas une somme de
deux carr\'es dans $\kappa$. Alors une somme non nulle de $4$ carr\'es dans 
le compl\'et\'e de $\R(Y)$ en $v$ a sa valuation paire.
Donc  $v(x'^2+y'^2+z'^2+t'^2)$ est paire.
Supposons $v(u)$  impaire. Dans le compl\'et\'e de $A$,
si $v(u)>0$, alors $1+u^2$ est un carr\'e. Si $v(u)<0$, alors 
$1+u^2$ est aussi un carr\'e dans le compl\'et\'e.  Donc 
$x^2+y^2+z^2 + 1+u^2$ est une somme de $4$ carr\'es dans le compl\'et\'e,
donc de valuation paire. Mais ceci contredit l'\'equation.
Ainsi, pour toute valuation $v$ telle que le corps r\'esiduel $\kappa$
satisfasse $(-1,-1)_{\kappa} \neq 0$, la valuation $v(u)$ est paire.

On conclut que la classe $(-1,-1,u) \in H^3(\R(Y),\Z/2)$ a tous ses r\'esidus nuls,
elle est donc non ramifi\'ee. D'autre part, si $(-1,-1,u) \in H^3(\R(Y),\Z/2)$ provenait de $H^3(\R,\Z/2\Z)$ alors, l'application $H^3(\R(u),\Z/2)\to H^3(\R(Y),\Z/2)$ \'etant injective, la classe $(-1,-1,u) \in H^3(\R(u),\Z/2)$ proviendrait de $H^3(\R,\Z/2\Z)$. Ceci  est impossible car la classe $(-1,-1,u) \in H^3(\R(u),\Z/2)$  est ramifi\'ee.
\end{proof}

\subsection{Vari\'et\'es lisses fibr\'ees en quadriques}

Soit $\mc{E}=\oplus_{i=0}^7\mc{E}_i$ un fibr\'e vectoriel sur $\P^1_\R$, o\`u $\mc{E}_i\coloneqq \mc{O}(1)$ pour $0\leq i\leq 2$ et $\mc{E}_i=\mc{O}$ pour $3\leq i\leq 7$. Soit encore $\mc{L}\coloneqq \mc{O}(2)$. On a $\mc{E}_i^\vee\otimes \mc{E}_i^\vee\otimes \mc{L}\cong \mc{O}$ si $0\leq i\leq 2$ et $\mc{E}_i^\vee\otimes \mc{E}_i^\vee\otimes \mc{L}\cong \mc{O}(2)$ si $3\leq i\leq 7$. On dispose donc des sections globales 
\begin{align*}
    1&\in \Gamma(\P^1_\R,\mc{O})=\Gamma(\P^1_\R, \mc{E}_i^\vee\otimes \mc{E}_i^\vee\otimes \mc{L})\qquad &(0\leq i\leq 2), \\
    u_0^2+u_1^2&\in \Gamma(\P^1_\R,\mc{O}(2))=\Gamma(\P^1_\R, \mc{E}_i^\vee\otimes \mc{E}_i^\vee\otimes \mc{L})\qquad &(i=3),\\
    -u_0u_1& \in \Gamma(\P^1_\R,\mc{O}(2))=\Gamma(\P^1_\R, \mc{E}_i^\vee\otimes \mc{E}_i^\vee\otimes \mc{L})\qquad &(3\leq i\leq 7).
\end{align*}

Consid\'erons la fibration en quadriques $Q \subset \P(\mc{E})$ d\'efinie par l'annulation de la section globale 
\[
q \coloneqq Z_0^2 + Z_1^2 + Z_2^2 + (u_0^2+u_1^2)Z_3^2 - u_0u_1(Z_4^2 + Z_5^2 + Z_6^2 + Z_7^2)\in \Gamma(\P^1_\R,\mathrm{Sym}^2(\mc{E}^\vee) \otimes \mc{L}).
\]

Soit $U_1=\{u_1\neq 0\}\subset \P^1_\R$. Une base de $\mc{E}|_{U_1}$ en tant que $\mc{O}_{U_1}$-module libre est donn\'ee par $f_0/u_1, f_1/u_1, f_2/u_1, f_3, f_4, f_5, f_6, f_7$, o\`u $f_i$ est une section globale de $(\mc{E}_i)|_{U_1}$. Notons $X_i$ les coordonn\'ees de fibre correspondantes. Alors $(Z_i)|_{U_1}=u_1X_i$ si $0\leq i\leq 2$ et $(Z_i)|_{U_1}=X_i$ si $3\leq i\leq 7$.
En substituant dans l'\'equation $q=0$, on obtient l'\'equation locale de $Q$ dans $U_1 \times \P^7_\R$:
\[
 u_1^2(X_0^2+X_1^2+X_2^2) + (u_0^2+u_1^2)X_3^2 - u_0u_1(X_4^2+X_5^2+X_6^2+X_7^2) = 0.
\]
Ceci \'equivaut \`a
\begin{equation} \label{eq:local_S}
 X_0^2+X_1^2+X_2^2 + (1+u^2)X_3^2 - u(X_4^2+X_5^2+X_6^2+X_7^2) = 0,
\end{equation}
o\`u $u=u_0/u_1$. En particulier, la vari\'et\'e affine (\ref{eq:affine}) est isomorphe \`a l'ouvert de Zariski de $Q$ o\`u $u_1\neq 0$ et $Z_3\neq 0$, en identifiant
\[(x,y,z,x',y',z',t')=(X_0/X_3,X_1/X_3,X_2/X_3,X_4/X_3,X_5/X_3,X_6/X_3,X_7/X_3).\]

Soit $U_0=\{u_0\neq 0\}\subset \P^1_\R$. Un argument analogue nous donne l'\'equation locale de $Q$ dans $U_0 \times \P^7_\R$:
\[
u_0^2(Y_0^2+Y_1^2+Y_2^2) + (u_0^2+u_1^2)Y_3^2 - u_0u_1(Y_4^2+Y_5^2+Y_6^2+Y_7^2) = 0,
\]
o\`u $(Z_i)|_{U_0}=u_0Y_i$ si $0\leq i\leq 2$ et $(Z_i)|_{U_0}=Y_i$ si $3\leq i\leq 7$.
Ceci \'equivaut \`a
\begin{equation} \label{eq:local_T}
Y_0^2+Y_1^2+Y_2^2 + (1+v^2)Y_3^2 - v(Y_4^2+Y_5^2+Y_6^2+Y_7^2) = 0,
\end{equation}
o\`u $v=u_1/u_0$.

\begin{lemma}\label{fibration-quadriques-a-c}
    La $\R$-vari\'et\'e $Y=Q\subset\P(\mc{E})$ satisfait les hypoth\`eses (a), (b) et (c) (avec $i=3$) du th\'eor\`eme \ref{generateur-variante}.
\end{lemma}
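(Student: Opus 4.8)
Les propriétés (b) et (c) (avec $i=3$) sont exactement le contenu du lemme \ref{deuxieme-modele}. Il reste à établir (a), qui est le point principal. Le plan est de déterminer le lieu singulier $S$ de $Y$, d'en exhiber une résolution $p\colon Z\to Y$ induisant un isomorphisme au-dessus de $U=Y\setminus S$ et dont le diviseur exceptionnel $T=p^{-1}(S)$ soit une fibration en quadriques isotropes (donc universellement $CH_0$-triviales) au-dessus de $S$, puis d'appliquer la remarque \ref{fibre-par-fibre}.

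On commencera par calculer $S$ à l'aide des équations locales (\ref{eq:local_S}) et (\ref{eq:local_T}). En annulant les dérivées partielles, on verra que $Y$ est lisse au-dessus de tout point de $\P^1_\R$ distinct de $0$ et de $\infty$ — y compris aux deux points non réels où $1+u^2=0$, car en ces points la dérivée par rapport à $u$ ne s'annule pas simultanément aux dérivées par rapport aux $X_i$. Au-dessus de $u=0$ on trouvera $S_0=\{X_0=X_1=X_2=X_3=0\}\cap\{X_4^2+X_5^2+X_6^2+X_7^2=0\}$, et de même $S_\infty$ au-dessus de $u=\infty$. Chacune de $S_0$ et $S_\infty$ est une surface quadrique lisse de $\P^3_\R$ sans point réel, et $S=S_0\sqcup S_\infty$ muni de sa structure réduite.

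On construira ensuite la résolution en éclatant $Y$ le long de $S$. Au voisinage d'un point $P\in S_0$, disons avec $X_7\neq 0$, la fonction $g\coloneqq X_4^2+X_5^2+X_6^2+X_7^2$ a une différentielle non nulle en $P$, donc $(u,X_0,X_1,X_2,X_3,g)$ se complète en un système de coordonnées locales sur l'espace ambiant, les coordonnées restantes paramétrant localement $S_0$ ; dans ces coordonnées l'équation de $Y$ est $X_0^2+X_1^2+X_2^2+(1+u^2)X_3^2-ug=0$, dont la partie homogène de degré $2$ est la forme quadratique non dégénérée $X_0^2+X_1^2+X_2^2+X_3^2-ug$. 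Un calcul dans les cartes de l'éclatement montrera alors que $p\colon Z\coloneqq\on{Bl}_S Y\to Y$ est un morphisme projectif birationnel avec $Z$ lisse, isomorphisme au-dessus de $U$, et que $T=p^{-1}(S)\to S$ est une fibration dont la fibre au-dessus d'un point $P$ est le cône tangent projectivisé, à savoir la quadrique lisse de $\P^5_{\kappa(P)}$ d'équation $\bar X_0^2+\bar X_1^2+\bar X_2^2+\bar X_3^2-\bar u\,\bar g=0$ (la même description vaut au-dessus de $S_\infty$ par symétrie des équations).

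Cette quadrique possède le point lisse $[\bar u:\bar X_0:\bar X_1:\bar X_2:\bar X_3:\bar g]=[1:0:0:0:0:0]$, rationnel sur $\kappa(P)$ ; étant lisse, isotrope et de dimension $\geq 1$, elle est $\kappa(P)$-rationnelle, donc universellement $CH_0$-triviale, et ce pour tout point $P$ du schéma $S$. La remarque \ref{fibre-par-fibre}, appliquée pour $k=\R$ au morphisme propre $T\to S$, entraînera alors que $p_*\colon CH_0(T_F)\to CH_0(S_F)$ est un isomorphisme pour tout corps $F$ contenant $\R$, en particulier pour $F=\R(Y)$ et $F=\C(Y)$ : c'est la propriété (a). L'obstacle principal est le calcul local garantissant la lissité de $Z$ et la structure de $T\to S$ ; il repose sur le fait que la singularité de $Y$ transverse à $S$ est une singularité quadratique ordinaire de rang maximal, équisingulière le long de $S$.
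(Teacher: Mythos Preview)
Your approach to (a) is essentially the paper's: compute the singular locus $S$, blow it up, identify the exceptional fibers as smooth isotropic quadrics, and apply Remark~\ref{fibre-par-fibre}. The only difference is that the paper observes that $Q\to\P^1_\R$ is an \emph{admissible} quadric bundle in the sense of \cite[D\'efinition~3.1]{colliot1993groupe} and invokes \cite[Th\'eor\`eme~3.3(a)]{colliot1993groupe} for the resolution and the description of its fibers, whereas you carry out the local computation by hand; both routes yield the same conclusion.

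There is, however, a genuine gap in your treatment of (b). You assert that (b) is ``exactement le contenu du lemme~\ref{deuxieme-modele}'', but that lemma concerns the model $Y\subset\P^1_\R\times\P^7_\R$, which is only \emph{birational} to $Q\subset\P(\mc{E})$; connectedness of the real locus is not a birational invariant without further hypotheses. The paper closes this gap by first observing (from the computation of $S$, which you also perform) that $Q(\R)$ lies in the smooth locus of $Q$, and then invoking Lemma~\ref{connexe-invariant} to transport connectedness from the model of Lemma~\ref{deuxieme-modele} to $Q$. You have all the ingredients in your sketch; you just need to assemble them in the right order and cite Lemma~\ref{connexe-invariant} explicitly.
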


\begin{proof}
La $\R$-vari\'et\'e $Q$ est birationnelle \`a la $\R$-vari\'et\'e du lemme \ref{deuxieme-modele}. La propri\'et\'e (c) en r\'esulte.

Le lieu singulier $S_1$ de $Q|_{U_1}$ est donn\'e par les \'equations
\begin{align*}
    X_0=X_1=X_2=(1+u^2)X_3=uX_4=uX_5=uX_6=uX_7&=0, \\
    2uX_3^2-(X_4^2+X_5^2+X_6^2+X_7)^2&=0,
\end{align*}
et donc co\"incide avec
\[S_1=\{u=X_0=X_1=X_2=X_3=X_4^2+X_5^2+X_6^2+X_7^2=0\}.\]
Le lieu singulier $S_0$ de $Q|_{U_0}$ est donn\'e par
\[S_0=\{v=Y_0=Y_1=Y_2=Y_3=Y_4^2+Y_5^2+Y_6^2+Y_7^2=0\}.\]
Le lieu singulier $S$ de $Q$ est donc l'union disjointe de $S_0$ et $S_1$. En particulier $Q(\R)$ est contenu dans le lieu non singulier de $Q$. En particulier, on obtient (b): la $\R$-vari\'et\'e $Q$ est birationnelle \`a la vari\'et\'e du lemme \ref{deuxieme-modele} et donc le lemme \ref{connexe-invariant} entraîne que $Q(\R)$ est connexe.

Les \'equations (\ref{eq:local_S}) et (\ref{eq:local_T}) montrent que le fibr\'e en quadriques $Q\to \P^1_\R$ est admissible au sens de \cite[D\'efinition 3.1]{colliot1993groupe}, c'est-\`a-dire, sa fibre g\'en\'erique est lisse et pour tout point ferm\'e $P\in \P^1_\R$, la restriction de $Q\to \P^1_\R$ \`a $\Spec(O_P)$ est le lieu des z\'eros d'une \'equation homog\`ene $\sum_{i=1}^8 a_iT_i^2$ o\`u $0\leq v_P(a_i)\leq 1$ pour tout $i$ et $v_P(a_i)=0$ pour $1\leq i\leq 4$. D'apr\`es \cite[Th\'eor\`eme 3.3(a)]{colliot1993groupe}, une r\'esolution des singularit\'es $\tilde{Q}\to Q$ est obtenue en \'eclatant la vari\'et\'e $Q$ le long de son lieu singulier. De plus, pour tout point singulier $P\in Q$ la fibre du morphisme $\tilde{Q}\to Q$ en $P$ est donn\'ee par une hypersurface quadrique lisse avec un $k(P)$-point rationnel (cf. \cite[bas de page 488]{colliot1993groupe}). Compte tenu de la remarque \ref{fibre-par-fibre}, ceci entraîne (a).
\end{proof}

Soient $q_3(u),\dots,q_7(u)\in \R[u]$ des polynômes de degr\'e $2$, sans racines doubles et tels que $\on{pgcd}(q_i,q_j)=1$ pour tout $3\leq i<j\leq 7$. Posons
\[p_i(u_0,u_1)\coloneqq u_1^2q_i(u_0/u_1)\in \Gamma(\P^1_\R,\mc{O}(2))=\Gamma(\P^1_\R,\mc{E}_i^\vee\otimes \mc{E}_i^\vee\otimes \mc{L})\qquad (3\leq i\leq 7).\]
Soit
\[q'\colon Z_0^2+Z_1^2+Z_2^2+p_3Z_3^2+p_4Z_4^2+p_5Z_5^2+p_6Z_6^2+p_7Z_7^2\in \Gamma(\P^1_\R, \on{Sym}^2(E^\vee)\otimes L),\]
et soit $Q'\coloneqq \{q'=0\}\subset \P(\mc{E})$. L'ouvert $(Q')_{U_1}\subset U_1\times \P^7_\R$ est donn\'e par l'\'equation
\[X_0^2+X_1^2+X_2^2+q_3(u)X_3^2+q_4(u)X_4^2+q_5(u)X_5^2+q_6(u)X_6^2+q_7(u)X_7^2=0,\]
o\`u $(Z_i)|_{U_1}=u_1X_i$ si $0\leq i\leq 2$ et $(Z_i)|_{U_1}=X_i$ si $3\leq i\leq 7$.
Le lieu singulier de $(Q')_{U_1}$ est donc donn\'e par
\begin{align*}
    X_0=X_1=X_2=q_3(u)X_3=q_4(u)X_4=q_5(u)X_5=q_6(u)X_6=q_7(u)X_7&=0,\\
    (\partial q_3)(u)X_3+(\partial q_4)(u)X_4+(\partial q_5)(u)X_5+(\partial q_6)(u)X_6+(\partial q_7)(u)X_7&=0,
\end{align*}
o\`u $\partial q_i$ est la d\'eriv\'ee de $q_i$ en $u$. Soit $P\in (Q')_{U_1}$ un point singulier. D'apr\`es les hypoth\`eses faites sur les $q_i$, il existe au plus un $j$ tel que $q_j(u(P))=0$, ce qui entraîne $X_i=0$ en $P$ pour tout $i\neq j$. Comme $q_j$ n'a pas de racine double, on a $(\partial q_j)(u(P))\neq 0$, et la derni\`ere \'equation donne alors $X_j=0$ en $P$, contradiction. Comme les $q_i$ sont de degr\'e $2$, la fibre de $Q'$ en $(1:0)$ est lisse. On conclut que la $\R$-vari\'et\'e $Q'$ est lisse. 

On consid\`ere la famille $\mc{X}\to \on{Spec}(\R[t])$ d\'efinie par l'\'equation $q+tq'=0$ dans $\P(\mc{E})\times_\R\R[t]$. La fibre du morphisme $\mc{X}\to \on{Spec}(\R[t])$ en $t=0$ est $Q$ et la fibre g\'en\'erique est lisse. On pose $X\coloneqq \mc{X}\times_{\R[t]}\puiseux$. Le th\'eor\`eme \ref{generateur-variante} et le lemme \ref{fibration-quadriques-a-c} nous donnent:

\begin{thm}\label{fibres-quadriques-6}
    La $\puiseux$-vari\'et\'e $X$ est une vari\'et\'e lisse, projective, de dimension $7$,  fibr\'ee en quadriques sur $\P^1_{\puiseux}$,  non $CH_0$-triviale, et telle que $X(\puiseux)$ soit semi-alg\'ebriquement connexe.
\end{thm}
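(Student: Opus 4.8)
The strategy is to apply Theorem~\ref{generateur-variante} to the singular model $Y=Q\subset\P(\mc{E})$, with $i=3$, and then to read off the remaining (elementary) assertions directly from the construction.

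First I would check that the family $\mc{X}\to\Spec(\R[t])$ defined by $q+tq'=0$ in $\P(\mc{E})\times_\R\R[t]$ has the shape demanded by Theorem~\ref{generateur-variante}. It is projective, being a closed subscheme of $\P(\mc{E})\times_\R\R[t]$, which is projective over $\R[t]$ because $\P(\mc{E})$ is projective over $\R$; its fibre at $t=0$ is $Q=Y$ by construction; and its generic fibre over $\R(t)$ is smooth. For the last point I would argue by openness: consider the pencil of quadric bundles $\{sq+tq'=0\}\subset\P(\mc{E})$ parametrised by $(s:t)\in\P^1_\R$; the set of $(s:t)$ for which the corresponding member is smooth over $\kappa(s:t)$ is Zariski open, since smoothness of the fibres is an open condition on the base of a proper morphism, and it is non-empty because the member at $(0:1)$ is $Q'=\{q'=0\}$, already shown to be smooth. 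Being a non-empty open of $\P^1_\R$, this set contains the generic point, hence the generic member $\{q+tq'=0\}$, i.e.\ the generic fibre of $\mc{X}\to\Spec(\R[t])$, is smooth (and geometrically integral, being a quadric bundle over $\P^1$ with positive-dimensional smooth generic fibre). In particular one may keep $U=\A^1_\R$ and there is no need to shrink around $0$.

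Next, by Lemma~\ref{fibration-quadriques-a-c} the $\R$-variety $Y=Q$ satisfies hypotheses (a), (b), (c) of Theorem~\ref{generateur-variante} with $i=3$. That theorem then applies and yields conclusions (i) and (ii): the semi-algebraic space $X(\puiseux)$ of the $\puiseux$-variety $X=\mc{X}\times_{\R[t]}\puiseux$ is semi-algebraically connected, and $X$ is not $CH_0$-trivial, hence not stably rational. The remaining properties are immediate from the construction: $X$ is a closed subscheme of $\P(\mc{E})_{\puiseux}$, hence projective over $\puiseux$; the composite $X\hookrightarrow\P(\mc{E})_{\puiseux}\to\P^1_{\puiseux}$ presents $X$ as a quadric fibration of relative dimension $6$ over $\P^1_{\puiseux}$; and, $X$ being a hypersurface in $\P(\mc{E})_{\puiseux}$, which has dimension $8$ as the projectivisation of a rank-$8$ bundle on a curve, $X$ has dimension $7$.

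As for where the difficulty lies: essentially all of it has been absorbed into the preparatory results. Lemma~\ref{deuxieme-modele} is the substantive cohomological input --- it produces the nontrivial unramified class $(-1,-1,u)\in H^3_{\on{nr}}(\R(Y),\Z/2)$ via Arason's injectivity theorem for the $8$-dimensional form $\langle 1,1,1,1+u^2,-u,-u,-u,-u\rangle$ together with a residue computation --- while Lemma~\ref{fibration-quadriques-a-c} supplies hypothesis (a) through the theory of admissible quadric bundles of \cite{colliot1993groupe} ($Q$ is resolved by blowing up its singular locus, and over each singular point the fibre of the resolution is a smooth quadric hypersurface carrying a rational point) combined with the fibre-by-fibre criterion of Remark~\ref{fibre-par-fibre}; and Theorem~\ref{generateur-variante} itself packages the specialisation machinery. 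Consequently the only step proper to the present proof that requires genuine verification is the smoothness of the generic fibre of $\mc{X}$, which is the mild openness argument above; I do not anticipate a serious obstacle at this level.
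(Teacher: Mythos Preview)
Your proposal is correct and follows exactly the paper's approach: the paper's own proof is the single line ``Le th\'eor\`eme \ref{generateur-variante} et le lemme \ref{fibration-quadriques-a-c} nous donnent,'' and the smoothness of the generic fibre of $\mc{X}\to\Spec(\R[t])$ is stated without justification just before the theorem. Your openness argument via the pencil $\{sq+tq'=0\}$ and the smoothness of $Q'$ at $(0:1)$ is precisely the verification the paper leaves implicit, and the remaining checks (projectivity, dimension, quadric-bundle structure) are routine as you say.
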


\begin{rmk}\label{autrescontrex}  Au th\'eor\`eme \ref{prop-fibre-quadriques} nous avons construit une vari\'et\'e 
lisse, projective, de dimension $3$,  fibr\'ee en quadriques sur $\P^1_{\puiseux}$ avec les mêmes propri\'et\'es.
   On part l\`a d'une fibration 
en quadriques $Y \to \P^1_\R$ dont la fibre g\'en\'erique est d\'efinie par une forme quadratique $q$ de rang $r=4$ sur $F=\R(\P^1)$
dont le d\'eterminant n'est pas un carr\'e. On suppose en outre
que $Y$ poss\`ede un $\R$-point lisse.
On utilise 
l'injectivit\'e de $H^2(F,\Z/2) \to H^2(F(q), \Z/2)$,
qui r\'esulte de l'hypoth\`ese sur le d\'eterminant,
et une classe non constante dans $H^2_{\on{nr}}(F(q)/\R, \Z/2)$. 

Pour toute forme quadratique $q$ non d\'eg\'en\'er\'ee de rang $r\geq 5$
l'application $$H^2(F,\Z/2) \to H^2_{\on{nr}}(F(q)/F, \Z/2)$$
est un isomorphisme. 
Pour $r\geq 5$, Ceci implique que l'application
$$H^2(\R,\Z/2) \to H^2_{\on{nr}}(F(q)/\R, \Z/2)$$
est un isomorphisme. 
On ne peut donc utiliser le groupe
$H^2(-, \Z/2)$ pour faire une construction analogue
avec $X$ de dimension plus grande que 3.
 
La  construction du th\'eor\`eme \ref{fibres-quadriques-6}
part d'une fibration
en quadriques $Y \to \P^1_\R$ dont la fibre g\'en\'erique est d\'efinie par une forme quadratique de rang $r= 8$ sur $F=\R(\P^1)$
dont le d\'eterminant n'est pas un carr\'e, donc qui n'est pas semblable \`a une 3-forme de Pfister. 
On utilise le fait que pour une telle forme~$q$ l'application
$H^3(F,\Z/2) \to H^3(F(q), \Z/2)$ est injective.

Plus g\'en\'eralement, d'apr\`es Arason \cite[Satz 5.6]{arason1975cohomologische},
 cette application est injective si la forme $q$
est de rang $r\geq 5$ et n'est pas une 3-voisine de Pfister, i.e.
n'est pas semblable \`a une sous-forme d'une 3-forme
de Pfister $\langle \!\langle a,b,c \rangle \!\rangle$.
Par ailleurs, on sait \cite[Th\'eor\`eme 10.2.4]{kahn2008formes} que pour  $q$ avec $r\geq 3$, l'application
$$H^3(F, \Q/\Z(2)) \to H^3_{\on{nr}}(F(q)/F, \Q/\Z(2))$$
est surjective sauf si $r=6$ et $q$
 est une forme d'Albert anisotrope.
 On d\'eduit de ces r\'esultats et d'un calcul de r\'esidus
 que l'application $H^3(\R, \Z/2) \to H^3_{\on{nr}}(F(q)/\R, \Z/2)$
 est surjective si $r\geq 9$.

On voit ainsi qu'outre les exemples que nous avons d\'evelopp\'es avec $r=4$  en utilisant $H^2$ et  $r=8$ en utilisant $H^3$,  on pourrait essayer d'utiliser $H^3(-, \Z/2)$ pour construire des  exemples avec  $r=5,6,7$ et $q$ non voisine d'une 3-forme de Pfister. 
\end{rmk}

\subsection{Intersections de deux quadriques lisses} 

Soit $\A^9_\R$ d\'efini par les coordonn\'ees $(x, y, z, u, v, x', y', z', t')$ et soit $U$ la vari\'et\'e r\'eelle affine d\'efinie par les \'equations:
\begin{align}
x^2+y^2+z^2+1+u^2 - uv &= 0, \label{eq:U1} \\
x'^2+y'^2+z'^2+t'^2 - v &= 0. \label{eq:U2}
\end{align}
Soient $[X_0: X_1: X_2: X_3: X_4: X_5: X_6: X_7: X_8: X_9]$ des coordonn\'ees homog\`enes sur $\P^9_\R$. On identifie $\A^9_\R$ \`a $U_0=\{X_0\neq 0\}\subset \P^9_\R$ en posant
\[(x,y,z,u,v,x',y',z',t')=(X_1/X_0,X_2/X_0,\dots, X_9/X_0).\]
D\'efinissons $Y$ par:
\begin{align} 
X_0^2+X_1^2+X_2^2+X_3^2+X_4^2 - X_4X_5 &= 0, \label{eq:F1} \\
X_6^2+X_7^2+X_8^2+X_9^2 -X_0X_5 &= 0. \label{eq:F2}
\end{align}
Donc $U=Y\cap U_0$. 

\begin{lemma}\label{int-2-quadriques-a-c}
    La $\R$-vari\'et\'e $Y$ satisfait les hypoth\`eses (a), (b) et (c) (avec $i=3$) du th\'eor\`eme \ref{generateur-variante}. 
\end{lemma}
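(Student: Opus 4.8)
Le plan est de suivre la strat\'egie de la d\'emonstration du lemme \ref{fibration-quadriques-a-c}, en rempla\c{c}ant l'analyse de la fibration en quadriques par une \'etude directe de l'intersection de deux quadriques $Y\subset\P^9_\R$ d\'efinie par (\ref{eq:F1})--(\ref{eq:F2}). On commence par observer que $Y$ est g\'eom\'etriquement int\`egre : la forme (\ref{eq:F1}), non d\'eg\'en\'er\'ee en les variables $X_0,\dots,X_5$ qui y figurent, est irr\'eductible, et (\ref{eq:F2}) fait intervenir $X_6,\dots,X_9$, de sorte que les deux quadriques forment une suite r\'eguli\`ere et $Y$ est une intersection compl\`ete de dimension pure $7$, en particulier de Cohen--Macaulay ; le calcul du lieu singulier ci-dessous montre qu'il est de codimension $5$ dans $Y$, si bien que $Y$ est normale, donc, \'etant connexe, int\`egre, le m\^eme raisonnement sur $\C$ donnant l'int\'egrit\'e g\'eom\'etrique. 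En \'eliminant $v=x'^2+y'^2+z'^2+t'^2$ gr\^ace \`a (\ref{eq:U1})--(\ref{eq:U2}), on voit que l'ouvert affine $U=Y\cap\{X_0\neq 0\}$ est $\R$-isomorphe au mod\`ele affine (\ref{eq:affine}) de la vari\'et\'e du lemme \ref{deuxieme-modele} ; en particulier $Y$ est $\R$-birationnelle \`a cette derni\`ere, et donc aussi \`a la vari\'et\'e $Q$ du lemme \ref{fibration-quadriques-a-c}.

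La propri\'et\'e (c) (avec $i=3$) en r\'esulte aussit\^ot : la cohomologie non ramifi\'ee ne d\'epend que du corps des fonctions, de sorte que (c) pour $Y$ est exactement l'\'enonc\'e du lemme \ref{deuxieme-modele}. Pour (b), on calcule le lieu singulier $S$ de $Y$ par le crit\`ere jacobien appliqu\'e \`a (\ref{eq:F1})--(\ref{eq:F2}) ; on trouve que $S$ est la r\'eunion disjointe des deux surfaces quadriques lisses
\[
S_1=\{X_0=X_5=X_6=X_7=X_8=X_9=0\}\cap\{X_1^2+X_2^2+X_3^2+X_4^2=0\},
\]
\[
S_2=\{X_0=X_1=X_2=X_3=X_4=X_5=0\}\cap\{X_6^2+X_7^2+X_8^2+X_9^2=0\},
\]
dont aucune ne contient de point r\'eel. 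En particulier $Y(\R)$ est contenu dans le lieu lisse de $Y$ ; comme $Q(\R)$ est de m\^eme contenu dans le lieu lisse de $Q$ d'apr\`es le lemme \ref{fibration-quadriques-a-c}, le lemme \ref{connexe-invariant} entra\^ine que $Y(\R)$ et $Q(\R)$ ont le m\^eme nombre de composantes semi-alg\'ebriquement connexes, \`a savoir une seule.

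Il reste (a), qui est le point principal. On prend pour r\'esolution $p\colon Z\to Y$ l'\'eclat\'e de $Y$ le long de $S$, qui est un isomorphisme au-dessus de $U=Y\setminus S$. L'argument est local le long de $S$ : au voisinage, pour la topologie \'etale, de tout point de $S_1$, en \'eliminant deux des variables $X_j$ \`a l'aide de (\ref{eq:F1})--(\ref{eq:F2}), on v\'erifie que $Y$ est isomorphe au produit du c\^one quadrique affine non d\'eg\'en\'er\'e $\{X_6^2+X_7^2+X_8^2+X_9^2-X_0w=0\}\subset\A^6$, de coordonn\'ees $X_0,w,X_6,X_7,X_8,X_9$, par une surface lisse, le sous-sch\'ema $S_1$ correspondant au produit du sommet du c\^one par cette surface ; de m\^eme au voisinage de tout point de $S_2$, avec le c\^one $\{X_0^2+X_1^2+X_2^2+X_3^2+X_4^2-X_4X_5=0\}\subset\A^6$, de coordonn\'ees $X_0,X_1,X_2,X_3,X_4,X_5$. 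Il en d\'ecoule que $Z$ est lisse et que la fibre de $p^{-1}(S)\to S$ en un point $P$ est la quadrique projective lisse de dimension $4$ donn\'ee dans $\P^5_{k(P)}$ par $X_6^2+X_7^2+X_8^2+X_9^2-X_0w=0$ au-dessus de $S_1$, et par $X_0^2+X_1^2+X_2^2+X_3^2+X_4^2-X_4X_5=0$ au-dessus de $S_2$. Chacune de ces quadriques poss\`ede un $k(P)$-point rationnel \'evident (par exemple $[1:0:0:0:0:0]$ au-dessus de $S_1$ et $[0:0:0:0:0:1]$ au-dessus de $S_2$), donc est $CH_0$-triviale ; la remarque \ref{fibre-par-fibre} appliqu\'ee au morphisme $p^{-1}(S)\to S$ donne alors que $p_*\colon CH_0(T_E)\to CH_0(S_E)$ est un isomorphisme pour tout corps $E$ contenant $\R$, o\`u $T=p^{-1}(S)$ ; en particulier pour $E=\R(Y)$ et $E=\C(Y)$, ce qui est la propri\'et\'e (a).

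L'obstacle principal sera cette analyse locale de $Y$ le long de son lieu singulier : il faut s'assurer que, transversalement \`a chacune des composantes $S_1$ et $S_2$, la singularit\'e de $Y$ est un c\^one quadrique non d\'eg\'en\'er\'e en six variables --- et non quelque chose de plus compliqu\'e --- de sorte qu'un unique \'eclatement la r\'esolve en produisant des fibres lisses munies d'un point rationnel. Le calcul du lieu singulier et les v\'erifications de (b) et (c) sont de routine.
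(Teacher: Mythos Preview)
Your proof is correct and follows essentially the same approach as the paper: compute the singular locus as two anisotropic quadric surfaces (your labeling swaps the paper's $S_1$ and $S_2$), use birationality with the variety of Lemma \ref{deuxieme-modele} together with Lemma \ref{connexe-invariant} for (b) and (c), and resolve by blowing up $S$, checking \'etale-locally that $Y$ is a quadric cone times a smooth surface so that the exceptional fibers are smooth $4$-dimensional quadrics with a rational point, whence (a) via Remark \ref{fibre-par-fibre}. The paper carries out the \'etale-local step more explicitly --- in a chart $U_9$ one uses the substitution $x_8\mapsto x_8^2$ to linearize one equation and eliminate a variable, yielding $\tilde C\times\A^2_\R$ with $\tilde C$ the cone on the quadric $z_0^2+z_1^2+z_2^2+z_3^2+z_4^2-z_4z_5=0$ --- but the structure of the argument is identical.
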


\begin{proof}
La $\R$-vari\'et\'e $Y$ est birationnelle \`a la $\R$-vari\'et\'e du lemme \ref{deuxieme-modele}. La propri\'et\'e (c) en r\'esulte.

La matrice jacobienne associ\'ee au syst\`eme d'\'equations (\ref{eq:F1})-(\ref{eq:F2}) est :
$$ \begin{pmatrix} 2X_0 & 2X_1 & 2X_2 & 2X_3 & 2X_4-X_5 & -X_4 & 0 & 0 & 0 & 0 \\ X_5 & 0 & 0 & 0 & 0 & X_0 & -2X_6 & -2X_7 & -2X_8 & -2X_9 \end{pmatrix}. $$
Soit $P$ un point singulier de $Y$. Si $X_5\neq 0$ en $P$, alors 
\[X_1=X_2=X_3=X_4=2X_4-X_5=2X_0^2+X_4X_5=0,\] ce qui contredit (\ref{eq:F1}). Donc $X_5=0$ en $P$. Si $X_0\neq 0$ en $P$, alors \[0=X_4X_5=-2X_0^2\neq 0,\] contradiction. Donc $X_0=X_5=0$ en $P$. La forme de la matrice jacobienne entraîne alors que l'on a soit $X_1=X_2=X_3=X_4=0$ en $P$ (avec $X_6,X_7,X_8,X_9$ arbitraires) soit $X_6=X_7=X_8=X_9=0$ (avec $X_1,X_2,X_3,X_4$ arbitraires). On d\'eduit que le lieu singulier $S\subset Y$, avec la structure r\'eduite, est l'union disjointe de
\begin{align*}
    S_1\coloneqq \{ X_0=X_1=X_2=X_3=X_4=X_5=0, \quad X_6^2+X_7^2+X_8^2+X_9^2=0\},\\
    S_2\coloneqq\{ X_0=X_5=X_6=X_7=X_8=X_9=0, \quad X_1^2+X_2^2+X_3^2+X_4^2=0\}. 
\end{align*}
    En particulier, $Y(\R)$ est contenu dans le lieu lisse de $Y$ et connexe, et donc par le lemme \ref{connexe-invariant} donne (b). 

Soit $p\colon Z\to Y$ l'\'eclatement de $Y$ en $S$: on veut montrer que $p$ est une r\'esolution des singularit\'es satisfaisant la propri\'et\'e (a). On montrera le r\'esultat plus pr\'ecis suivant: la $\R$-vari\'et\'e $Z$ est lisse et pour tout $P\in S$ la fibre de $p$ en $P$ est une quadrique projective lisse avec un $k(P)$-point (d'apr\`es la remarque \ref{fibre-par-fibre}, ceci entraîne (a)). Pour tout $0\leq i\leq 9$, soit $U_i\coloneqq \{X_i\neq 0\}\subset \P^9_\R$. Pour tout point $P\in S_1$ il existe $i$ avec $6\leq i\leq 9$ tel que $P\in U_i$. Par sym\'etrie, on peut supposer $i=9$. L'ouvert $Y\cap U_9\subset \A^9_\R$ est donn\'e par les \'equations
\begin{align*}
    x_0^2+x_1^2+x_2^2+x_3^2+x_4^2-x_4x_5 &= 0,\\
    x_6^2+x_7^2+x_8^2+1-x_0x_5&= 0,
\end{align*}
o\`u $x_i\coloneqq X_i/X_9$ sont les cordonn\'ees de $U_9=\A^9_\R$. 

Comme on a $P\in S_1$, on a $x_0=0$ en $P$ et donc la deuxi\`eme \'equation entraîne l'existence de $j$ avec $6\leq j\leq 8$ tel que $x_j\neq 0$ en $P$. Par sym\'etrie, on peut supposer  $j=8$. Soit $\A^9_\R$ un autre espace affine, avec cordonn\'ees $y_0,\dots,y_8$. On a un morphisme
\[f\colon \A^9_\R\to \A^9_\R,\qquad (x_0,\dots,x_8)\mapsto (x_0,\dots,x_7,x_8^2).\]
Comme $x_8\neq 0$ en $P$, la restriction de $f$ \`a $Y\cap U_9\subset \A^9_\R$ est \'etale au voisinage de $P$. De plus, si $W\subset \A^9_\R$ est donn\'e par
\begin{align*}
    y_0^2+y_1^2+y_2^2+y_3^2+y_4^2-y_4y_5&=0,\\
    y_6^2+y_7^2+y_8+1-y_0y_5&=0,
\end{align*}
alors $f^{-1}(W)=Y\cap U_9$. On note que $W$ est isomorphe \`a la sous-vari\'et\'e de $\A^8_\R$, avec cordonn\'ees $z_j$ pour $0\leq j\leq 7$, donn\'ee par l'\'equation
\[z_0^2+z_1^2+z_2^2+z_3^2+z_4^2-z_4z_5=0,\]
ce qui est isomorphe \`a $\tilde{C}\times \A^2_\R$, o\`u $\tilde{C}$ est le cône affine sur une quadrique projective lisse $C$ avec un $\R$-point. En particulier, l'\'eclat\'e de la vari\'et\'e $\tilde{C}\times \A^2_\R$ en son lieu singulier $\{0\}\times \A^2_\R$ est une $\R$-vari\'et\'e lisse, et le diviseur exceptionnel est donn\'e par la deuxi\`eme projection $C\times \A^2_\R\to \A^2_\R$. On d\'eduit que tout point de $p^{-1}(S_1)$ est un point lisse de $Z$ et que pour tout $P\in S_1$ la fibre de $p$ en $P$ est une quadrique projective lisse avec un $k(P)$-point. On peut traiter $S_2$ de fa\c{c}on similaire. D'apr\`es la remarque
\ref{fibre-par-fibre}, ceci \'etablit
la propri\'et\'e (a).
\end{proof}

Soient $q_1$ et $q_2$ deux formes quadratiques sur $\R$ en les $9$ variables $X_0,\dots,X_9$, telles que le syst\`eme $q_1=q_2=0$ d\'efinisse une intersection compl\`ete lisse de deux quadriques dans $\P^9_{\R}$. 

Soit $\mc{X}\subset \P^9_{\R[t]}$ d\'efini par le syst\`eme
\begin{align*} 
X_0^2+X_1^2+X_2^2+X_3^2+X_4^2 - X_4X_5 +tq_1=0, \\
X_6^2+X_7^2+X_8^2+X_9^2 -X_0X_5 + tq_2=0.
\end{align*}
Alors $\mc{X}|_{t=0}=Y$ et la fibre g\'en\'erique sur $\R(t)$ est une intersection compl\`ete lisse de deux quadriques dans $\P^9_{\R(t)}$.
On pose $X\coloneqq \mc{X}\times_{\R[t]}\puiseux$.
Le th\'eor\`eme \ref{generateur-variante} et le lemme \ref{int-2-quadriques-a-c} donnent ici :

\begin{thm}\label{intersection-quadriques-p9}
L'intersection lisse de deux quadriques $X$ dans dans $\P^9_{\puiseux}$ n'est pas $CH_0$-triviale, et l'espace $X(\puiseux)$ est semi-alg\'ebriquement connexe. 
\end{thm}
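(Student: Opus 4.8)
Le plan est de d\'eduire l'\'enonc\'e directement du th\'eor\`eme~\ref{generateur-variante}, appliqu\'e \`a la famille $f\colon\mc{X}\to\A^1_\R=\on{Spec}(\R[t])$ construite ci-dessus, dont la fibre sp\'eciale $\mc{X}|_{t=0}$ est l'intersection compl\`ete singuli\`ere $Y\subset\P^9_\R$ de (\ref{eq:F1})--(\ref{eq:F2}). On commence par noter que $Y$ est projective et g\'eom\'etriquement int\`egre : c'est une intersection compl\`ete de deux quadriques dans $\P^9$, donc de Cohen--Macaulay et connexe, de dimension $7$ ; comme son lieu singulier $S=S_1\sqcup S_2$ est de dimension $2$, la vari\'et\'e $Y$ est r\'eguli\`ere en codimension $1$, donc normale par le crit\`ere de Serre, donc int\`egre, et le m\^eme calcul sur $\C$ donne l'int\'egrit\'e g\'eom\'etrique. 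Ensuite, d'apr\`es le lemme~\ref{int-2-quadriques-a-c}, la vari\'et\'e $Y$ satisfait les hypoth\`eses (a), (b) et (c) (avec $i=3$) du th\'eor\`eme~\ref{generateur-variante}. Il reste \`a v\'erifier que $f$ est projectif, \`a fibre g\'en\'erique lisse, \`a fibre sp\'eciale $Y$ : la projectivit\'e est claire car $\mc{X}\subset\P^9_{\R[t]}$ est ferm\'e ; l'\'egalit\'e $\mc{X}|_{t=0}=Y$ vaut par construction ; et la fibre g\'en\'erique sur $\R(t)$ est une intersection compl\`ete lisse de deux quadriques car, en compactifiant $t$ en $\P^1$ et en passant \`a la carte $s=1/t$, les \'equations deviennent $sF_1+q_1=sF_2+q_2=0$, dont la fibre en $s=0$ est l'intersection compl\`ete lisse $q_1=q_2=0$ fix\'ee au d\'epart ; la lissit\'e \'etant une condition ouverte et satisfaite en $\infty$, elle l'est au point g\'en\'erique.

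Une fois ces v\'erifications faites, le th\'eor\`eme~\ref{generateur-variante} s'applique et fournit exactement les deux assertions : l'espace semi-alg\'ebrique $X(\puiseux)$ est semi-alg\'ebriquement connexe, et la $\puiseux$-vari\'et\'e $X=\mc{X}\times_{\R[t]}\puiseux$ n'est pas $CH_0$-triviale, donc n'est pas stablement rationnelle. Enfin, $X$ est bien une intersection de deux quadriques dans $\P^9_{\puiseux}$ : elle y est d\'ecoup\'ee par les deux formes quadratiques $X_0^2+X_1^2+X_2^2+X_3^2+X_4^2-X_4X_5+tq_1$ et $X_6^2+X_7^2+X_8^2+X_9^2-X_0X_5+tq_2$, obtenues des \'equations d\'efinissant $\mc{X}$ par changement de base le long de $\R[t]\hookrightarrow\puiseux$, et elle est lisse d'apr\`es le paragraphe pr\'ec\'edent.

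Tout le contenu substantiel est concentr\'e dans le lemme~\ref{int-2-quadriques-a-c}, et l'obstacle principal est donc ce lemme, et en son sein l'hypoth\`ese (a) : il faut montrer que l'\'eclatement de $Y$ le long de son lieu singulier $S=S_1\sqcup S_2$ fournit une r\'esolution $p\colon Z\to Y$, isomorphe au-dessus de $U=Y\setminus S$, telle que $p_*$ induise des isomorphismes $CH_0(T_{\R(Y)})\xrightarrow{\ \sim\ }CH_0(S_{\R(Y)})$ et $CH_0(T_{\C(Y)})\xrightarrow{\ \sim\ }CH_0(S_{\C(Y)})$, o\`u $T=p^{-1}(S)$. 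La voie que je suivrais est le calcul de mod\`ele local \'etale : au voisinage d'un point de $S_1$ (et par sym\'etrie de $S_2$), apr\`es un rev\^etement \'etale rempla\c{c}ant une coordonn\'ee de fibre par son carr\'e, $Y$ devient isomorphe \`a $\widetilde C\times\A^2_\R$ avec $\widetilde C$ le c\^one affine sur une quadrique projective lisse $C$ munie d'un point rationnel, dont l'\'eclat\'e au sommet est lisse de diviseur exceptionnel $C\times\A^2$ ; ainsi $Z$ est lisse et la fibre de $p$ en tout $P\in S$ est une quadrique lisse avec un $k(P)$-point, donc une $k(P)$-vari\'et\'e $CH_0$-triviale. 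Le crit\`ere fibre par fibre rappel\'e dans la remarque~\ref{fibre-par-fibre} (via \cite[Proposition 1.8]{colliot2016hypersurfaces}) transforme alors ceci en les isomorphismes voulus, d'o\`u (a). L'hypoth\`ese (c) est h\'erit\'ee, par invariance birationnelle de la cohomologie non ramifi\'ee, du lemme~\ref{deuxieme-modele}, dont la classe non nulle de degr\'e $3$, \`a savoir $(-1,-1,u)$, repose sur le th\'eor\`eme d'injectivit\'e d'Arason \cite[Satz 5.6]{arason1975cohomologische} appliqu\'e \`a la forme de rang $8$ $\langle 1,1,1,1+u^2,-u,-u,-u,-u\rangle$, qui n'est pas semblable \`a une $3$-forme de Pfister ; et (b) est h\'erit\'ee du m\^eme lemme~\ref{deuxieme-modele} par invariance birationnelle du nombre de composantes semi-alg\'ebriquement connexes (lemme~\ref{connexe-invariant}, applicable puisque $Y(\R)$ est contenu dans le lieu lisse de $Y$).
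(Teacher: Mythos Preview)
Your proposal is correct and follows exactly the paper's approach: one simply combines le th\'eor\`eme~\ref{generateur-variante} with le lemme~\ref{int-2-quadriques-a-c}. You have usefully spelled out a few verifications that the paper leaves implicit (g\'eom\'etrique int\'egrit\'e de $Y$ via Cohen--Macaulay plus $R_1$, lissit\'e de la fibre g\'en\'erique par ouverture \`a partir de la fibre en $t=\infty$), and your third paragraph is a faithful r\'esum\'e of the proof of le lemme~\ref{int-2-quadriques-a-c} itself, but none of this departs from the paper's strategy.
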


\begin{rmk}
Au th\'eor\`eme  \ref{prop-intersection-quadriques} nous avons construit des intersections compl\`etes lisses $X$ de deux
quadriques dans $\P^5_{\puiseux}$ pour lesquelles $X(\puiseux)$ est semi-al\-g\'e\-bri\-que\-ment connexe et $X$ n'est pas $CH_0$-triviale. Soit $R$ un corps r\'eel clos. 
On peut se demander s'il existe de tels exemples dans $\P^n_R$ pour d'autres valeurs $n\geq 6$, $n\neq 9$. 
On pourrait essayer d'en construire pour
$n=6,7,8$ en partant de la remarque \ref{autrescontrex}.
Il convient cependant de se rappeler que d'apr\`es \cite{hassett2022rationality}
de tels exemples n'existent pas dans $\P^6_R$.
    \end{rmk}

\section{Calcul d'invariants cohomologiques non ramifi\'es pour les exemples}\label{calcul}

\begin{prop}\label{br-surj}
Soit  $k$ un corps de caract\'eristique z\'ero.  
Soit $X$ une $k$-vari\'et\'e projective lisse
g\'eom\'e\-tri\-quement int\`egre. Supposons que $X$ est $k$-birationnelle \`a une $k$-vari\'et\'e
de l'un des types suivants :

\begin{itemize}
    \item[(a)] intersection compl\`ete  lisse de deux quadriques dans $\P^n_{k}$ avec $n \geq 5$;
    \item[(b)] hypersurface cubique lisse dans $\P^n_{k}$ avec $n \geq 4$;
    \item[(c)] mod\`ele projectif et lisse d'une $k$-vari\'et\'e g\'eom\'etriquement int\`egre $Y$ munie d'une
fibration en quadriques $Y \to \P^1_k$ \`a fibre g\'en\'erique lisse et dont toutes les fibres ont une composante g\'eom\'etriquement int\`egre de multiplicit\'e $1$.
\end{itemize} 

Alors  $\Br(k) \to \Br(X)$ est surjectif.
\end{prop}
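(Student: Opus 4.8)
Le plan est de se ramener d'abord au cas où $X$ est elle-même de l'un des types (a), (b), (c), puis de contrôler $\Br(X)$ au moyen de la suite spectrale de Hochschild--Serre pour $X\to\Spec(k)$, l'argument étant complété dans le cas (c) par une analyse des résidus sur $\P^1_k$. La réduction est immédiate: en caractéristique zéro toute $k$-variété projective lisse géométriquement intègre vérifie $\Br(X)=\Br_{\on{nr}}(k(X)/k)$, de sorte que l'énoncé ne dépend que de l'extension $k(X)/k$; on peut donc supposer que $X$ est une intersection complète lisse de deux quadriques dans $\P^n_k$ avec $n\geq 5$, une hypersurface cubique lisse dans $\P^n_k$ avec $n\geq 4$, ou un modèle projectif lisse $Z$ d'une fibration en quadriques $Y\to\P^1_k$ comme en (c).

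Pour (a) et (b), on pose $\overline X=X\times_k\overline k$. Dans le cas (a), $\overline X$ est géométriquement rationnelle, donc $\Br(\overline X)=\Br_{\on{nr}}(\overline k(\overline X)/\overline k)=0$. Dans le cas (b), $\dim\overline X=n-1\geq 3$, donc $\on{Pic}(\overline X)=\Z\cdot\mc O(1)$ par le théorème de Lefschetz--Grothendieck; et l'on a $\Br(\overline X)=0$ (pour $n\geq 5$ parce que $H^3_{\text{\'et}}(\overline X)=0$ par Lefschetz; pour $n=4$ parce que le deuxième nombre de Betti d'un solide cubique lisse égale le rang de son groupe de Néron--Severi et que son $H^3$ étale est sans torsion). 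Dans les deux cas $\Br(X)=\Br_1(X)$, et la suite exacte de Hochschild--Serre
\[\Br(k)\longrightarrow\Br_1(X)\longrightarrow H^1(k,\on{Pic}(\overline X))\longrightarrow H^3(k,\overline k^{\times})\]
montre que la surjectivité cherchée équivaut à $H^1(k,\on{Pic}(\overline X))=0$. Or $\dim X\geq 3$ entraîne $\on{Pic}(\overline X)=\Z$ avec action galoisienne triviale (engendré par la classe d'hyperplan, définie sur $k$), et $H^1(k,\Z)=\on{Hom}_{\on{cont}}(G_k,\Z)=0$.

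Le cas (c) constitue le point délicat. Soient $F=k(\P^1_k)$ et $Q$ la fibre générique de $Y\to\P^1_k$, une quadrique lisse sur $F$ vérifiant $k(Q)=k(Z)$. On montre d'abord que $\Br(F)\to\Br(Q)$ est surjective: $Q_{\overline F}$ étant une quadrique lisse sur un corps algébriquement clos, donc rationnelle, on a $\Br(Q_{\overline F})=0$ et $\Br(Q)=\Br_1(Q)$; et $H^1(F,\on{Pic}(Q_{\overline F}))=0$, parce que $\on{Pic}(Q_{\overline F})$ est $\Z$ à action triviale si $\dim Q\neq 2$, et est le module de permutation $\Z[G_F/G_{F(\sqrt d)}]$ attaché à l'extension quadratique discriminante si $\dim Q=2$ (dans les deux cas $H^1=0$, par le lemme de Shapiro pour le second), de sorte que Hochschild--Serre conclut comme ci-dessus. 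On prend alors $\beta\in\Br(Z)=\Br_{\on{nr}}(k(Z)/k)\subseteq\Br_{\on{nr}}(k(Q)/F)=\Br(Q)$ et on relève $\beta$ en $\gamma\in\Br(F)$. Pour chaque point fermé $P$ de $\P^1_k$, on choisit une composante géométriquement intègre $W$ de multiplicité $1$ de la fibre $Y_P$: la multiplicité $1$ assure que $\mc O_{Y,\eta_W}$ est un anneau de valuation discrète de corps résiduel $k(W)$, définissant une valuation $w$ de $k(Y)=k(Z)$, triviale sur $k$, d'indice de ramification $e(w/v_P)=1$ sur la valuation de $\P^1_k$ en $P$. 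Comme $\beta$ est non ramifiée, la formule de compatibilité des résidus donne $0=\partial_w(\beta)=\partial_{v_P}(\gamma)|_{k(W)}$; l'intégrité géométrique de $W$ rend injective la restriction $\on{Hom}_{\on{cont}}(G_{k(P)},\Q/\Z)\to\on{Hom}_{\on{cont}}(G_{k(W)},\Q/\Z)$, d'où $\partial_{v_P}(\gamma)=0$. Le point $P$ étant arbitraire, $\gamma\in\Br_{\on{nr}}(k(t)/k)=\Br(\P^1_k)=\Br(k)$, et donc $\beta$ appartient à l'image de $\Br(k)$.

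Le principal obstacle réside dans le cas (c): il faut d'une part l'ingrédient de théorie des formes quadratiques $H^1(F,\on{Pic}(Q_{\overline F}))=0$ (calcul du module de Picard galoisien d'une quadrique sur un corps), et d'autre part traduire précisément l'hypothèse \enquote{composante géométriquement intègre de multiplicité $1$} en les deux propriétés utilisées dans le calcul de résidus, à savoir $e(w/v_P)=1$ et l'injectivité de la restriction sur les caractères (ce qui demande un peu de soin sur la dimension et la structure locale de $Y$ le long des composantes de fibres). Cette hypothèse est indispensable: si on l'omet --- par exemple pour la fibration en quadriques de rang $4$ du \S\ref{contreexemple}, dont la fibre au-dessus de $0$ n'est pas géométriquement intègre --- la conclusion tombe en défaut, l'obstruction de Brauer fournissant justement la non-$CH_0$-trivialité. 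Les cas (a) et (b) sont, eux, formels une fois admise la nullité (classique) de $\Br(\overline X)$, le seul point non évident étant l'absence de torsion dans le $H^3$ étale d'un solide cubique lisse.
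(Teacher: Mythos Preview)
Your proof is correct and follows essentially the same route as the paper's. For (a) and (b) the paper simply invokes \cite[Theorem 8.3.2]{colliot2021brauer}, whereas you unpack the underlying Hochschild--Serre argument (trivial Galois action on $\on{Pic}(\overline X)=\Z$ and $\Br(\overline X)=0$); for (c) both proofs first use that $\Br(F)\to\Br(Q)$ is surjective for a smooth quadric $Q$ over $F=k(\P^1)$, then lift $\beta$ and kill the residues of the lift at every closed point of $\P^1_k$ via the geometrically integral multiplicity-$1$ component---exactly the computation the paper packages as a reference to \cite[Corollary 11.1.6]{colliot2021brauer}. One tiny wording issue: the exact sequence only shows that $H^1(k,\on{Pic}(\overline X))=0$ is \emph{sufficient} for surjectivity, not equivalent, but since you establish the vanishing this does not affect the argument.
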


\begin{proof}
    
 Dans les cas (a) et (b),  cette fl\`eche est un isomorphisme \cite[Theorem 8.3.2]{colliot2021brauer}.
Dans le cas (c), on commence par observer que la fl\`eche
\[\Br(k(\P^1)) \to \Br_{\on{nr}}(k(Y)/k(\P^1))= \Br(Y_{\eta})\] est surjective car la fibre g\'en\'erique $Y_{\eta}/k(\P^1)$
est une quadrique lisse. Une classe $\alpha$  dans $\Br(X) \subset \Br(k(X))= \Br(k(Y))$
est donc image d'une classe $\beta \in \Br(k(\P^1))$. Pour tout point ferm\'e $P\in \P^1_k$, la fibre $Y_P$ admet une composante irr\'eductible $Z_P\subset Y_P$ de multiplicit\'e $1$. La comparaison du r\'esidu de $\beta$ en $P$ et du r\'esidu de $\alpha$ au point g\'en\'erique de $Z_P$ (voir \cite[Corollary 11.1.6]{colliot2021brauer})  donne que le r\'esidu de $\beta$ en $P$ est trivial. Comme ceci pour tout point ferm\'e $P$ de $\P^1_k$, on conclut que $\alpha$ est image d'un \'el\'ement $\gamma \in  \Br(\P^1_{k})=\Br(k)$.
\end{proof} 

\subsection{Un th\'eor\`eme de Witt}

\begin{thm}\label{wittgen}
    Soit $R$ un corps r\'eel clos, et soit $K=R(\Gamma)$ le corps des fonctions d'une $\R$-courbe
$\Gamma$ g\'eom\'etriquement int\`egre. Soient $q_{1}$ et $q_{2}$ deux formes quadratiques   non d\'eg\'en\'er\'ees
sur le corps $K$. Supposons $1 \leq {\rm rang}(q_{1}) <  {\rm rang}(q_{2})$ et $3 \leq {\rm rang}(q_{2})$.
Si apr\`es \'evaluation en  presque tout point  $c \in \Gamma(R)$, la forme quadratique
$q_{1,c}$  est isomorphe \`a une sous-forme de $q_{2,c}$, alors la $K$-forme $q_{1}$
est isomorphe \`a une sous-forme de $q_{2}$. En particulier, si une forme quadratique $q$
non d\'eg\'en\'er\'ee sur $K$ de rang au moins 3 se sp\'ecialise en une forme $q_{c}$
isotrope sur $R$ pour presque tout $c$, alors cette forme sur $K$ a un z\'ero non trivial.
\end{thm}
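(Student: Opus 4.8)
Le plan est de ramener d'abord l'énoncé général au cas d'isotropie (la dernière assertion de l'énoncé), puis de démontrer ce cas d'isotropie à l'aide de la théorie des formes quadratiques sur les corps de fonctions de courbes réelles.

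\emph{Réduction.} On démontre par récurrence sur $r_1\coloneqq\on{rang}(q_1)\geq 1$ l'assertion (un peu plus générale) où l'on suppose seulement $1\leq r_1<r_2\coloneqq\on{rang}(q_2)$. On utilise librement le fait qu'une forme non dégénérée $q$ représente $a\in K^*$ si et seulement si $q\perp\langle -a\rangle$ est isotrope. Si $r_1=1$, on a $q_1=\langle a\rangle$ avec $a\in K^*$, et l'hypothèse entraîne que $q_{2,c}$ représente $a(c)$, donc que $\langle -a(c)\rangle\perp q_{2,c}$, de rang $r_2+1\geq 3$, est isotrope sur $R$, pour presque tout $c$ ; le cas d'isotropie ci-dessous donne alors que $\langle -a\rangle\perp q_2$ est isotrope sur $K$, c'est-à-dire que $q_2$ représente $a$, donc que $\langle a\rangle$ est une sous-forme de $q_2$. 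Si $r_1\geq 2$, on fixe une décomposition $q_1\cong\langle a\rangle\perp q_1'$ sur $K$ ; le cas $r_1=1$ montre que $q_2$ représente $a$, d'où une décomposition $q_2\cong\langle a\rangle\perp q_2'$ ; en spécialisant ces décompositions, en les combinant avec l'hypothèse, et en appliquant le théorème de simplification de Witt sur le corps réel clos $R$, on obtient que $(q_1')_c$ est une sous-forme de $(q_2')_c$ pour presque tout $c$, donc $q_1'$ est une sous-forme de $q_2'$ par hypothèse de récurrence, et finalement $q_1=\langle a\rangle\perp q_1'\subset\langle a\rangle\perp q_2'=q_2$. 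La dernière assertion du théorème est alors le cas $q_1=\langle 1,-1\rangle$ de cet énoncé (une forme non dégénérée isotrope admet le plan hyperbolique comme sous-forme).

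\emph{Le cas d'isotropie.} Il s'agit de montrer : si $q$ est non dégénérée sur $K=R(\Gamma)$ de rang $n\geq 3$ et si $q_c$ est isotrope sur $R$ pour presque tout $c\in\Gamma(R)$, alors $q$ est isotrope sur $K$. On étend $q$ en une forme non dégénérée sur un ouvert dense $V$, que l'on peut prendre commun à $\Gamma$ et à un modèle projectif lisse $\tilde\Gamma$ de $\Gamma$, de complémentaire fini dans $\tilde\Gamma$. Si $\tilde\Gamma(R)=\emptyset$, le corps $K$ n'est pas formellement réel, donc son groupe de Galois absolu est sans $2$-torsion ; comme $K(i)$ est le corps des fonctions d'une courbe sur le corps algébriquement clos $R(i)$, on a $\on{cd}_2(K(i))\leq 1$, d'où $\on{cd}_2(K)\leq 1$ par le théorème de Serre, donc $\Br(K)[2]=H^2(K,\Z/2)=0$ et toute forme de rang $\geq 3$ sur $K$ est isotrope. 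Si $\tilde\Gamma(R)\neq\emptyset$, on montre d'abord que $q$ est totalement indéfinie, c'est-à-dire que $|\on{sgn}_\alpha(q)|<n$ pour tout ordre $\alpha$ de $K$ : la fonction $\alpha\mapsto\on{sgn}_\alpha(q)$ est localement constante sur le spectre réel $\on{Sper}(K)$, donc $T\coloneqq\{\alpha:|\on{sgn}_\alpha(q)|=n\}$ y est ouvert et fermé ; si $T$ n'était pas vide, on prendrait $\alpha_0\in T$ centré en un point $P_0\in\tilde\Gamma(R)$, et comme $\tilde\Gamma(R)$ est une variété semi-algébrique de dimension $1$ sans point isolé, les ordres attachés aux points de $V(R)$ voisins de $P_0$ d'un côté fixé s'accumuleraient en $\alpha_0$, si bien que $q_P$ serait définie pour une infinité de $P\in V(R)\subset\Gamma(R)$, contredisant l'hypothèse. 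Ainsi $q$ est totalement indéfinie, et le théorème de Witt, dans sa version valable sur un corps réel clos arbitraire (cf. \cite{lam2005introduction}), entraîne que $q$ est isotrope sur $K$ ; de manière équivalente, cela découle du principe de Hasse pour les formes quadratiques sur $K$, les places de corps résiduel $R(i)$ n'imposant aucune condition aux formes de rang $\geq 3$ (le corps $R(i)(\!(t)\!)$ étant de $u$-invariant $2$) et les places de corps résiduel $R$ correspondant aux ordres de $K$.

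\emph{Principal obstacle.} Le point délicat est le cas d'isotropie avec $\tilde\Gamma(R)\neq\emptyset$, c'est-à-dire l'extension du théorème de Witt de 1934 à un corps réel clos de base quelconque, autrement dit le fait que le corps des fonctions d'une courbe sur un corps réel clos est de nombre de Hasse $\leq 2$. Il faudra en préciser la référence, ou bien déduire le principe de Hasse correspondant de la cohomologie réelle étale développée au \S\ref{ehres}. L'argument d'accumulation dans $\on{Sper}(K)$ réclame par ailleurs un peu de soin au voisinage des points de $\tilde\Gamma$ exclus de $V$.
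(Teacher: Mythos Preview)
Your argument is essentially the paper's: reduce the subform statement to the isotropy statement via Witt cancellation (your induction is just an explicit unwinding of this), then invoke Witt's theorem on isotropy over $R(\Gamma)$; the paper simply cites \cite[Satz~22]{witt1937theorie} for the latter and \cite[Satz~4]{witt1937theorie} for the former. The gap you yourself flag---extending Witt's Satz~22 from $\R$ to an arbitrary real closed base, and the care needed in your $\on{Sper}(K)$ accumulation argument when $R$ has irrational cuts---is exactly what the paper addresses by also noting that the full statement is a special case of Scheiderer's local--global principle \cite[Corollaire~6.2]{scheiderer1996hasse}: the scheme of isometric embeddings of $q_1$ into $q_2$ is a homogeneous space under the connected group $SO(q_2)$ (here one uses $\on{rang}(q_1)<\on{rang}(q_2)$), and Scheiderer's result covers such spaces over any field of virtual cohomological dimension~$\leq 1$, in particular over $R(\Gamma)$. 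Citing this disposes of your ``principal obstacle'' without any analysis of orderings.
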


\begin{proof} 
Ceci se d\'eduit en combinant deux r\'esultats de Witt : 
 \cite[Satz 22]{witt1937theorie} sur l'isotropie des formes quadratiques sur $R(\Gamma)$,
  ce qui correspond au cas du plan hyperbolique $q_{1}(x,y)=x^2-y^2$, et \cite[Satz 4]{witt1937theorie} sur la simplification des formes  quadratiques.
  Comme l'espace des repr\'esentations
 de $q_{1}$ par $q_{2}$ est un espace homog\`ene sous le groupe lin\'eaire
 connexe $SO(q_{2})$, cet \'enonc\'e est  aussi  un cas particulier de \cite[Corollaire 6.2]{scheiderer1996hasse}.
 \end{proof}
 
 \begin{rmk}
 Cet \'enonc\'e est en d\'efaut si le rang de $q_{1}$ est \'egal au rang de $q_{2}$.
 \end{rmk}

 \subsection{Fibrations en quadriques et intersections de deux quadriques}

On calcule $H^3_{\on{nr}}(-,\Z/2)$ pour certaines fibrations en quadriques. En utilisant le th\'eor\`eme suivant, on en d\'eduit des r\'esultats analogues pour certaines intersections lisses de deux quadriques avec un point rationnel. 

\begin{thm}\label{theorem3.2}
    Soit $k$ un corps parfait de caract\'eristique diff\'erente de $2$, soit $X\subset \P^n_k$ une intersection compl\`ete lisse de deux quadriques telle que $X(k)$ soit non vide. \`A tout $k$-point $P$ de $X$, on associe une fibration en quadriques $X'\to\P^1_k$, o\`u $X'$ est lisse, telle que toute fibre g\'eom\'etrique ait au plus un point singulier (et donc soit int\`egre si $n\geq 5$) et telle que $X'$ soit $k$-birationnelle \`a $X$.
\end{thm}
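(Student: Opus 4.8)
Le plan est de projeter $X$ à partir du point $P$ et de reconnaître, dans le pinceau de quadriques définissant $X$, une fibration en quadriques au-dessus du $\P^1$ paramétrant ce pinceau. On choisit des coordonnées homogènes $x_0,\dots,x_n$ sur $\P^n_k$ telles que $P=[1:0:\dots:0]$ ; puisque $P\in X$, on peut écrire $Q_j=x_0L_j(x_1,\dots,x_n)+q_j(x_1,\dots,x_n)$ pour $j=1,2$, avec $L_j$ une forme linéaire et $q_j$ une forme quadratique, et la lissité de $X$ en $P$ équivaut à l'indépendance linéaire de $L_1$ et $L_2$. Pour $t=(\lambda:\mu)\in\P^1_k$, en posant $L_t=\lambda L_1+\mu L_2$ et $q_t=\lambda q_1+\mu q_2$, on définit
\[X'\coloneqq\{\,(y,t)\in\P^{n-1}_k\times\P^1_k\ :\ L_t(y)=q_t(y)=0\,\},\]
que l'on munit de la seconde projection $\varpi\colon X'\to\P^1_k$. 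Il reste alors à vérifier les points suivants.

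D'abord, $X'$ est $k$-birationnelle à $X$. La première projection $X'\to\P^{n-1}_k$ est birationnelle sur l'hypersurface cubique $W\coloneqq\{L_1q_2-L_2q_1=0\}$, d'inverse $y\mapsto(y,(L_2(y):-L_1(y)))$ ; et la projection à partir de $P$, $[x_0:\dots:x_n]\mapsto[x_1:\dots:x_n]$, induit une application birationnelle de $X$ sur $W$, d'inverse $y\mapsto[-q_1(y)/L_1(y):y]$ (bien définie hors de l'espace linéaire $\Lambda\coloneqq\{L_1=L_2=0\}$). Ensuite, $\varpi$ est une fibration en quadriques : les hyperplans $\{L_t=0\}$ de $\P^{n-1}_k$ forment, lorsque $t$ varie, un fibré en $\P^{n-2}$ lisse au-dessus de $\P^1_k$, dans lequel $X'$ est le lieu des zéros de la forme quadratique $q_t$, de sorte que la fibre $\varpi^{-1}(t)$ est la quadrique $\bar q_t\coloneqq q_t|_{\{L_t=0\}}$, de dimension $n-3$.

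On analyse enfin la dégénérescence et la lissité de $X'$. Comme $X$ est une intersection complète lisse de deux quadriques, le discriminant $\det(\lambda M_1+\mu M_2)$, forme binaire de degré $n+1$, est séparable : le pinceau $\{Q_t\}$ a exactement $n+1$ membres singuliers, chacun de corang $1$, de sommet un point $v_t$ qui n'appartient pas à $X$ (si $v_t\in X$, la relation $\nabla Q_t(v_t)=0$ fournirait une dépendance linéaire entre $\nabla Q_1(v_t)$ et $\nabla Q_2(v_t)$, contredisant la lissité de $X$). Comme $Q_t=x_0L_t+q_t$, un changement de coordonnées linéaire montre que le rang de $Q_t$ vaut celui de $\bar q_t$ augmenté de $2$ ; donc $\bar q_t$ est singulière si et seulement si $Q_t$ l'est, et elle est alors de corang $1$, son unique point singulier étant l'image $\bar v_t$ de $v_t$ par la projection à partir de $P$. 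En particulier toute fibre géométrique de $\varpi$ a au plus un point singulier, et elle est intègre dès que $n\geq 5$. Pour la lissité : au-dessus de $\P^{n-1}_k\setminus\Lambda$, le point précédent identifie $X'$ à un ouvert de la variété lisse $X$ ; et un calcul de matrice jacobienne montre qu'un point $(y,t)$ de $X'$ situé au-dessus de $\Lambda$ ne peut y être singulier que si $y$ appartient au lieu $B\coloneqq\{L_1=L_2=q_1=q_2=0\}$ et est un point singulier de $\bar q_t$. Or l'unique point singulier éventuel de $\bar q_t$ est $\bar v_t$, et $\bar v_t\notin B$ : sinon, les égalités $L_j(\bar v_t)=q_j(\bar v_t)=0$ $(j=1,2)$ entraîneraient $Q_1(v_t)=Q_2(v_t)=0$, soit $v_t\in X$, ce qui est exclu. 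Donc $X'$ est lisse ; fermée dans $\P^{n-1}_k\times\P^1_k$ et $k$-birationnelle à $X$, elle est en outre projective et géométriquement intègre.

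Le point le plus délicat est cette lissité de $X'$. Le lieu $B$ est précisément l'image, par la projection à partir de $P$, de la réunion des droites de $X$ passant par $P$ (droites que cette projection contracte, et qui sont donc susceptibles d'introduire des singularités) ; tout repose sur le fait que les sommets des fibres singulières de $\varpi$, images des sommets des membres singuliers du pinceau, évitent le lieu $B$, ce qui découle de ce qu'un membre singulier du pinceau $\{Q_t\}$ a son sommet hors de $X$.
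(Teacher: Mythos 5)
Your construction is correct, and it is essentially the proof of the result the paper merely cites: the paper's ``proof'' is only a pointer to \cite[Theorem 3.2]{colliot1987intersectionsI}, and what you write out --- projecting from $P$, fibering over the pencil via the hyperplanes $\{L_t=0\}$ (the polar hyperplanes of $P$ with respect to $Q_t$), identifying $X'$ with an open subset of $X$ away from $\Lambda=\{L_1=L_2=0\}$, and checking via the Jacobian that a singular point of $X'$ over $\Lambda$ would have to be a vertex of a singular fibre lying on $B=\{L_1=L_2=q_1=q_2=0\}$, which is excluded since the vertex of a singular member of the pencil cannot lie on $X$ --- is the classical argument of that reference made explicit. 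The facts you invoke (separability of $\det(\lambda M_1+\mu M_2)$ and corank $\leq 1$ of the singular members for a smooth complete intersection, and the equality of the coranks of $Q_t$ and of $\bar q_t$) are exactly the ones underlying the cited proof, so this is the same approach, presented self-contained and with the delicate smoothness check done correctly.
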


\begin{proof}
    Voir \cite[Theorem 3.2, (3.4) p. 61, Remark 1.13.1]{colliot1987intersectionsI}.
\end{proof}

Nous utiliserons aussi le th\'eor\`eme suivant.
\begin{thm}\label{KRS}
Soit $k$ un corps de caract\'eristique diff\'erente de $2$.
Soit $q$ une forme quadratique non d\'eg\'en\'er\'ee sur $k$, de rang $r\geq 3$. Soit $k(q)$ le corps des fonctions de la quadrique projective d\'efinie par $q=0$.
Soit $i \geq 0$ un entier. Dans chacun des cas suivants, l'application
$$H^{i}(k,\Q/Z(i-1)) \to H^{i}_{\on{nr}}(k(q), \Q/Z(i-1))$$
est surjective:

(i) $i=2$ et $r\geq 3$

(ii) $i=3$ et $r\neq 6$

(iii) $i=3$, $r=6$  et $q$ n'est pas une forme d'Albert

(iv) $i=4$ et $r\leq 5.$

\end{thm}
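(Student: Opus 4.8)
Le plan est de se ramener \`a des calculs connus de cohomologie non ramifi\'ee des quadriques ; seul le cas $i=2$ sera trait\'e directement, les autres \'etant des r\'esultats profonds que l'on se contentera de citer.

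On \'ecarte d'abord le cas isotrope : si $q$ est isotrope, la quadrique lisse $Q=\{q=0\}\subset\P^{r-1}_k$ poss\`ede un $k$-point, est donc $k$-rationnelle, et $k(q)$ est une extension transcendante pure de $k$ ; on a alors $H^i_{\on{nr}}(k(q)/k,M)=H^i(k,M)$ pour tout module $M$ (invariance homotopique de la cohomologie non ramifi\'ee), et il n'y a rien \`a d\'emontrer. On suppose donc d\'esormais $q$ anisotrope, de sorte que $Q$ est lisse et projective.

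Pour (i), $i=2$ : on a $H^2(k,\Q/\Z(1))=\Br(k)$, et, $Q$ \'etant lisse et projective, la puret\'e pour les groupes de Brauer donne $H^2_{\on{nr}}(k(q)/k,\Q/\Z(1))=\Br_{\on{nr}}(k(q)/k)=\Br(Q)$. Comme $\overline{Q}$ est rationnelle, on a $\Br(\overline{Q})=0$, donc $\Br(Q)=\Br_1(Q)$, et la suite exacte de basse dimension de la suite spectrale de Leray pour $Q\to\Spec(k)$ \`a coefficients $\mathbb{G}_m$ fournit une injection $\Br(Q)/\on{im}\Br(k)\hookrightarrow H^1(k,\on{Pic}\overline{Q})$. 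Or $\on{Pic}\overline{Q}$ est $\Z$ \`a action galoisienne triviale si $r=3$ ou $r\geq 5$, et pour $r=4$ c'est un module galoisien de permutation (de base les deux familles de g\'en\'eratrices, \'eventuellement \'echang\'ees par l'action galoisienne via le discriminant) ; dans tous les cas $\on{Pic}\overline{Q}$ est une somme directe de modules induits \`a partir de sous-groupes ouverts, et le lemme de Shapiro donne $H^1(k,\on{Pic}\overline{Q})=0$, d'o\`u $\Br(Q)=\on{im}\Br(k)$.

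Pour (ii) et (iii), $i=3$ : c'est le Th\'eor\`eme 10.2.4 de \cite{kahn2008formes}, reposant sur l'analyse du complexe de coniveau d'une quadrique par Kahn--Rost--Sujatha et sur le th\'eor\`eme du r\'esidu de norme : pour $q$ anisotrope de rang $r\geq 3$, l'application $H^3(k,\Q/\Z(2))\to H^3_{\on{nr}}(k(q)/k,\Q/\Z(2))$ est surjective, la seule exception \'etant le cas $r=6$ avec $q$ forme d'Albert, o\`u le conoyau est engendr\'e par la classe de l'alg\`ebre de biquaternions associ\'ee. Pour (iv), $i=4$ et $r\leq 5$, la quadrique est \enquote{petite} : pour $r=3$ (conique de Severi--Brauer d'une alg\`ebre de quaternions) et $r=4$ (surface quadrique), la d\'ecomposition cellulaire de $\overline{Q}$ et des arguments de transfert imposent $H^i_{\on{nr}}(k(q)/k,\Q/\Z(i-1))=\on{im}H^i(k)$ en tout degr\'e $i$, et pour $r=5$ on invoque le calcul correspondant en petit rang de Kahn--Rost--Sujatha. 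L'obstacle principal est que (ii), (iii) et la partie $r=5$ de (iv) n'admettent pas de d\'emonstration courte et autonome : ce sont les ingr\'edients profonds, qu'il faut citer, seul le cas $i=2$ se traitant \`a la main.
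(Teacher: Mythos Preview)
Your handling of the isotropic case and of (i) is correct and more explicit than the paper's, which simply records that the $i=2$ case is classical. For (ii) and (iii) you correctly defer to the literature; the paper cites \cite[Th\'eor\`eme~5, Corollaire~10(2)]{kahn1998unramified} directly rather than via \cite{kahn2008formes}, but this is immaterial.

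There is, however, a genuine gap in your treatment of (iv). The assertion that for $r=3,4$ the surjectivity in \emph{every} degree follows from ``la d\'ecomposition cellulaire de $\overline{Q}$ et des arguments de transfert'' is not a proof. A transfer via a quadratic splitting extension $L/k$ (the residue field of a degree-$2$ closed point for $r=3$, or the discriminant extension for $r=4$) only yields $\on{cores}_{L/k}\circ\on{res}_{L/k}(\alpha)=2\alpha$ for $\alpha\in H^i_{\on{nr}}(k(q)/k,\Q/\Z(i-1))$; since $\Q/\Z(i-1)$ has non-trivial $2$-torsion, this does not give $\alpha$ itself in the image of $H^i(k)$. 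The cellular structure of $\overline{Q}$ does not descend to $k$ and adds nothing here. The $2$-primary part is precisely the hard part, and it is hard uniformly for $r\leq 5$: the paper attributes all of (iv) to Kahn--Sujatha \cite[Theorem~3]{kahnsuja1998unramified} (note: this is the two-author paper \cite{kahnsuja1998unramified}, distinct from Kahn--Rost--Sujatha \cite{kahn1998unramified}). You should cite that result for all of (iv) rather than attempt to carve out $r=3,4$ as elementary.
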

\begin{proof} Le cas $i=2$ est classique. Les \'enonc\'es pour $i=3$ sont dus \`a Kahn, Rost et Sujatha \cite[Th\'eor\`eme 5, Corollaire 10 (2)]{kahn1998unramified}. L'\'enonc\'e (iv) est dû \`a Kahn et Sujatha \cite[Theorem 3]{kahnsuja1998unramified}. 
\end{proof}

\begin{prop}\label{H3nrRpincquad}
 Soit $R$ un corps r\'eel clos.
 Soit $X$ une $R$-vari\'et\'e projective et lisse munie d'un
 morphisme  $X \to \P^1_{R}$ dont la fibre g\'en\'erique est une quadrique de dimension  $d$.  Supposons $X(R)$ semi-alg\'ebriquement connexe.

(a) Si $d=2$,  l'application $H^3(R,\Z/2) \to H^3_{\on{nr}}(R(X)/R,\Z/2)$ est un isomorphisme.


(b) Si $d=3$, l'application
$H^4(R,\Z/2) \to H^4_{\on{nr}}(R(X)/R,\Z/2)$
est un isomorphisme.
\end{prop}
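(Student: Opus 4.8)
The plan is to use the quadric-fibration structure of $X\to\P^1_R$. Set $K\coloneqq R(\P^1)$, so that the generic fibre is a quadric $q$ over $K$ of rank $r=d+2\in\{4,5\}$ and $R(X)=K(q)$. I may assume that the semi-algebraically connected space $X(R)$ is non-empty, and that $q$ is anisotropic over $K$: otherwise $X$ is $R$-rational, and both assertions follow from the birational invariance of unramified cohomology together with $H^j_{\on{nr}}(\P^n_R/R,\Z/2)=H^j(R,\Z/2)$. Injectivity of $H^i(R,\Z/2)\to H^i_{\on{nr}}(R(X)/R,\Z/2)$ is immediate: since $X$ is smooth, every $R$-point of $X$ is a smooth point, and specialisation at such a point provides a retraction. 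So the whole problem is to show surjectivity, i.e. that every $\alpha\in H^i_{\on{nr}}(R(X)/R,\Z/2)$ equals $0$ or $(-1)^i$, where $(d,i)\in\{(2,3),(3,4)\}$.

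First I would transport $\alpha$ to the base. As $\alpha$ is unramified over $R$ it is in particular unramified over $K$, so $\alpha\in H^i_{\on{nr}}(K(q)/K,\Z/2)$; by the theorem of Kahn--Rost--Sujatha (Theorem \ref{KRS}, case (ii) for $(i,r)=(3,4)$ and case (iv) for $(i,r)=(4,5)$; the switch between $\mu_2$-coefficients and $\Q/\Z(i-1)$-coefficients is harmless in degree $i\geq 2$ over $K$ and its residue fields because $K[\sqrt{-1}]=C(\P^1)$ has $2$-cohomological dimension $1$) one can write $\alpha=\beta_{K(q)}$ with $\beta\in H^i(K,\Z/2)$. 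This is precisely the step where the restriction on $d$ is used: for $r=6$ (that is $d=4$, $i=3$) the corresponding surjectivity breaks down for Albert forms.

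Next I would study the residues $\partial_P\beta\in H^{i-1}(\kappa(P),\Z/2)$ at the closed points $P$ of $\P^1_R$. At a non-real closed point $\kappa(P)=C$ and $H^{i-1}(C,\Z/2)=0$, so $\partial_P\beta=0$. At a real point $P$ over which the fibre $X_P$ possesses a geometrically integral multiplicity-one component $Z_P$ passing through $X(R)$, the residue field $\kappa(Z_P)$ is formally real, so $H^{i-1}(R,\Z/2)\to H^{i-1}(\kappa(Z_P),\Z/2)$ is injective, and comparing $\partial_P\beta$ with the residue of $\alpha$ at $Z_P$ (which vanishes, $\alpha$ being unramified) forces $\partial_P\beta=0$. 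Witt's theorem \ref{wittgen} shows that the locus $D\subset\P^1(R)$ where the fibre has no $R$-point, i.e. where $q_P$ is $\pm$-definite of full rank, is a proper closed semi-algebraic subset (a finite union of intervals), and in particular that $K(q)$ is formally real; so the residues of $\beta$ are concentrated on $D$, up to using the semi-algebraic argument below to absorb the degenerate real fibres that lack a geometrically integral multiplicity-one component.

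Finally I would bring in the real topology. The semi-algebraic function $x\mapsto\alpha(x)\in H^i(R,\Z/2)=\Z/2$ on $X(R)$ is locally constant --- this comes from Scheiderer's comparison of the \'etale and real-\'etale sites \cite{scheiderer1994real}, under which the relevant Zariski sheaf $\mc{H}^i(\Z/2)$ restricts to the constant sheaf on the real locus --- hence constant, say equal to $c$, because $X(R)$ is semi-algebraically connected. Replacing $\alpha$ by $\alpha-c\,(-1)^i$ (and $\beta$ correspondingly), I may assume $\alpha$ vanishes at every $R$-point of $X$; then $\beta$ vanishes at every $P\in\P^1(R)\setminus D$ and still has residues only on $D$, hence localises to $0$ at each such $P$, and so does $\alpha$. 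Since a quadric fibration over $\P^1_\C$ is $\C$-rational, $\C(X_\C)$ has trivial unramified cohomology, and the local--global principle for unramified mod-$2$ cohomology of smooth projective $R$-varieties in this situation guarantees that a class vanishing at every $R$-point is zero; therefore $\alpha=0$. Combined with injectivity this gives $H^i_{\on{nr}}(R(X)/R,\Z/2)=\{0,(-1)^i\}\cong H^i(R,\Z/2)$. The hard part is this last local--global step: making rigorous that an unramified mod-$2$ class on a smooth projective $R$-variety whose complexification has vanishing unramified cohomology, and which vanishes at all $R$-points, must itself vanish. This is a Scheiderer-type statement at the critical degree $i=\dim X$, where the $\C$-rationality of $X_\C$ is essential --- and one must also handle with care the coefficient bookkeeping and the degenerate fibres without a geometrically integral multiplicity-one component.
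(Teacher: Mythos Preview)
Your outline starts along the same lines as the paper --- lift $\alpha$ to $\beta\in H^i(R(\P^1),\Z/2)$ via Kahn--Rost--Sujatha, normalize so that $\alpha$ vanishes on $X(R)$ --- but then diverges at the decisive step, and the divergence is a genuine gap. The ``local--global principle'' you invoke, namely that an unramified mod-$2$ class on a smooth projective $R$-variety with $X_C$ rational which vanishes at every $R$-point must vanish, is \emph{not} a Scheiderer-type statement available in degree $i=\dim X$. The result of \cite{colliot1990real} gives exactly this for $i\geq\dim X+1$, but at the critical degree it fails in general and is essentially what you are trying to prove. (For threefolds, a statement of this shape is a theorem of Benoist--Wittenberg via the real integral Hodge conjecture, which the paper even mentions in a remark; it is far from a formal consequence of real-\'etale sheaf theory, and no analogue is available for the fourfold case (b).) Your residue bookkeeping is also incomplete: the hypothesis says nothing about the singular fibres of $X\to\P^1_R$, so you cannot assume a geometrically integral multiplicity-one component exists, and you cannot conclude that $\beta$ has residues only on the interval~$D$.

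The paper's argument avoids all of this by exploiting the very special shape of $H^i(R(\P^1),\Z/2)$. Using that $C(\P^1)$ has cohomological dimension~$1$, cup-product with $(-1)$ induces a surjection $H^{j}(R(u),\Z/2)\to H^{j+1}(R(u),\Z/2)$, so $\beta=(-1)^{i-1}\cup P(u)$ with $P(u)=\prod(u+a_j)$, $a_j\in R$. After normalizing so that $\alpha$ vanishes on $X(R)$ and arranging the image of $X(R)\to\P^1(R)$ to be $[0,\infty]$, positivity of $P$ on $u>0$ forces all $a_j\geq0$. The key point is then purely quadratic-form-theoretic: for each $a\geq0$ one checks, by specialising at almost every $c\in R$ and applying Witt's theorem (\ref{wittgen}) on subforms over $R(\P^1)$, that the generic quadric $q$ is a subform of the Pfister form $\langle\!\langle-1,\dots,-1,u+a\rangle\!\rangle$. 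Hence this Pfister form becomes hyperbolic over $R(X)$, so each factor $(-1)^{i-1}\cup(u+a_j)$ dies there, and $\alpha=0$. This Pfister-subform argument is the missing idea in your proposal; your residue/local-global strategy does not reach it and cannot be completed without an input of comparable strength.
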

\begin{proof}
D\'emontrons d'abord l'\'enonc\'e (a).
Notons $C=R(\sqrt{-1})$. La fibration $X_{C}  \to \P^1_{C}$ admet une section, sa fibre g\'en\'erique est une quadrique, donc la $C$-vari\'et\'e $X_{C}$ est rationnelle. Pour tout corps $k$ de caract\'eristique diff\'erente de $2$, d'apr\`es le th\'eor\`eme de Merkurjev  \cite{merkurjev1981norm}, l'application $H^2(k,\mu_{2^n}^{\otimes 2})\to H^2(k,\mu_{2^{n-1}}^{\otimes 2})$ est surjective pour tout $n\geq 1$, et donc l'application $H^3(k,\Z/2) \to H^3(k,\Q/\Z(2))$ est injective. On en d\'eduit l'injectivit\'e des fl\`eches horizontales dans le diagramme commutatif suivant
\[
\begin{tikzcd}
    H^3(R(\P^1),\Z/2) \arrow[r,"\sim"] \arrow[d] & H^3(R(\P^1),\Q/\Z(2)) \arrow[d,->>] \\
    H^3_{\on{nr}}(R(X)/R(\P^1),\Z/2) \arrow[r,hook] & H^3_{\on{nr}}(R(X)/R(\P^1),\Q/\Z(2)).
\end{tikzcd}
\]
La surjectivit\'e de la fl\`eche du haut suit de la nullit\'e de $H^3(C(\P^1),\Q/\Z(2))$ et d'un argument de restriction-corestriction. La surjectivit\'e de l'application de droite 
est donn\'ee par le th\'eor\`eme \ref{KRS}(ii). On en d\'eduit la surjectivit\'e de la fl\`eche de gauche. Combinant ces faits, on voit que toute classe $$\alpha \in 
 H^3_{\on{nr}}(R(X)/R,\Z/2) \subset H^3_{\on{nr}}(R(X)/R(\P^1),\Z/2)$$ est l'image d'une classe $\beta \in
 H^3(R(\P^1), \Z/2)$.
 Soit $\alpha \in H^3_{\on{nr}}(R(X),\Z/2)$. On fixe un point  $m$ de $X(R)$. Sous l'hypoth\`ese que $X(R)$ est semi-alg\'ebriquement  connexe,
 l'image de $X(R)$ dans $\P^1(R)$ est un intervalle de $\P^1(R)$.
 Si cet intervalle est tout $\P^1(R)$, le th\'eor\`eme \ref{wittgen} assure que la fibration $X \to \P^1_{R}$ admet une section
 et donc que l'espace total est $R$-rationnel. On
 supposera que ce n'est pas le cas.
 On  peut supposer l'intervalle  donn\'e par $0 \leq u \leq \infty$,
 o\`u $\A^1_{R}=\Spec(R[u]) \subset \P^1_{R}$.
 
Soit $G\coloneqq \on{Gal}(C/R)$. Comme le corps $C(u)$ est de dimension cohomologique $1$, la suite exacte longue de cohomologie associ\'ee \`a la suite de $G$-modules \[0\to \Z/2\to (\Z/2)[G]\to \Z/2\to 0\] montre la surjectivit\'e de l'application $H^i(R(u),\Z/2)\to H^{i+1}(R(u),\Z/2)$ donn\'ee par $a\mapsto (-1)\cup a$ pour tout $i\geq 1$. Donc toute classe dans $H^3(R(u),\Z/2)$ s'\'ecrit
 comme un cup-produit $(-1,-1,P(u))$ avec $P(u)\in R[u]$. Comme $(-1,r^2+s^2)=0$ dans $\on{Br}(R(u))$ pour tout $r,s\in R(u)$,  on peut de plus supposer
 $$P(u) = \prod_{i=1}^n (u+a_{i})$$
 avec tous les $a_{i} \in R$ distincts. La classe $\alpha$ prend  une valeur constante  dans $\Z/2= H^3(R, \Z/2)$ sur chaque composante semi-alg\'ebrique connexe de $X(R)$, donc sur $X(R)$. Quitte \`a modifier $\alpha$ par l'image de $(-1,-1,-1)$, on peut supposer que $\alpha$ s'annule sur $X(R)$.   Ainsi la fonction $P(u) $ est positive sur $u>0$. Ceci implique $a_{i} \geq 0$ pour tout $i$.

On peut supposer  la fibre g\'en\'erique $X_{\eta}$ de $X\to \P^1_R$ donn\'ee par l'annulation
d'une forme quadratique $q\coloneqq \langle 1,a,b,c\rangle$, avec $a,b,c \in R(u)$.

Soit  $f\in R(u)$.
Si la forme quadratique non d\'eg\'en\'er\'ee  $q$ est une sous-forme de la forme quadratique
$\langle \!\langle -1,-1, f\rangle \!\rangle $,  alors cette forme de Pfister admet un z\'ero sur le corps des fonctions
$R(u)(q)$ de la quadrique d\'efinie par $q$, donc est totalement hyperbolique sur
ce corps. Ceci implique que  la classe de cohomologie $(-1,-1,f) \in H^3(R(u),\Z/2)$
s'annule dans $H^3(R(u)(q),\Z/2)$.
 
 Pour terminer la d\'emonstration, il suffit donc de montrer :
 pour tout  $a \geq 0$, la forme quadratique $q$, de rang $4$,
 est une sous-forme de la forme  $\langle \!\langle -1,-1, u+a \rangle \!\rangle$, de rang 8.
D'apr\`es le th\'eor\`eme \ref{wittgen}, il suffit de voir
que ceci vaut pour presque toute sp\'ecialisation de $u$
en un point $c\in R$.
 
 Soit $a\geq 0$.
 Si $c+a  <0$, alors $c<0$ et donc $c$ n'est pas dans l'image de 
 $X(R) \to \P^1(R)$. La forme $q_{c}$ est donc anisotrope.
 Comme $q$ repr\'esente 1 sur $R(u)$, il en est de m\^{e}me de $q_{c}$ sur $R$,
 et $q_{c}$ est isomorphe \`a $\langle 1,1,1,1\rangle $.
 La forme $\langle \!\langle-1,-1, c+a \rangle \!\rangle$ contient la forme 
 $\langle \!\langle -1,-1 \rangle \!\rangle=\langle 1,1,1,1 \rangle$.
 
 Si $c+a>0$,  alors la forme  $\langle \!\langle -1,-1, c+a\rangle \!\rangle$  sur $R$ est isotrope donc totalement
 hyperbolique. Elle repr\'esente donc toute forme quadratique sur $R$
 de rang au plus \'egal \`a 4.

Voici les modifications  \`a apporter \`a cette d\'emonstration pour obtenir l'\'enonc\'e (b). On consid\`ere le diagramme commutatif
 \[
\begin{tikzcd}
    H^4(R(\P^1),\Z/2) \arrow[r,"\sim"] \arrow[d] & H^4(R(\P^1),\Q/\Z(3)) \arrow[d,->>] \\
    H^4_{\on{nr}}(R(X)/R(\P^1),\Z/2) \arrow[r,hook] & H^4_{\on{nr}}(R(X)/R(\P^1),\Q/\Z(3)).
\end{tikzcd}
\]
Pour tout corps $k$ de caract\'eristique diff\'erente de $2$,  l'application $H^3(k,\mu_{2^n}^{\otimes 3})\to H^3(k,\mu_{2^{n-1}}^{\otimes 3})$ est surjective pour tout  $n\geq 1$ par un th\'eor\`eme de Merkurjev, Suslin et Rost,
 et donc l'application $H^4(k,\Z/2) \to H^4(k,\Q/\Z(3))$ est injective. Ceci donne l'injectivit\'e des fl\`eches horizontales.
 La surjectivit\'e de la fl\`eche verticale de droite est donn\'ee par le th\'eor\`eme \ref{KRS} (iv). Le groupe $H^4(R(\P^1),\Q/\Z(3))$ est de 2-torsion car $C(\P^1)$ est de dimension cohomologique 1.  Soit $q$ une forme quadratique de rang 5 sur $R(u)$ repr\'esentant 1 d\'efinissant la fibre g\'en\'erique de $X \to \P^1_R$. En utilisant  le th\'eor\`eme \ref{wittgen},
 et en proc\'edant comme pour la d\'emonstration de (a), on voit, par sp\'ecialisation de $u$ en presque tout point $c\in R$,
 que pour tout $a\geq 0$, la forme quadratique
 $q$, de rang 5, et donc de rang au plus 8,  est une sous-forme de la forme $\langle \!\langle -1,-1,-1, u+a \rangle \!\rangle$, de rang 16.
 \end{proof}
\begin{rmk}
La surjectivit\'e de la fl\`eche 
$H^{4}(k,\Q/\Z(3)) \to H^{4}_{\on{nr}}(k(q), \Q/\Z(3))$
est  aussi \'etablie par Kahn, Rost et Sujatha pour certains types de formes quadratiques de rang $r>5$. Pour celles qui sont de rang au plus 8, la d\'emonstration de la proposition \ref{H3nrRpincquad}(b) s'\'etend.
\end{rmk}

\begin{rmk}
En combinant \cite[Theorem 1.1]{benoist2024rationality} et \cite[Th\'eor\`eme 4.7]{colliot2024certaines}, on obtient sur $R$ le corps des s\'eries
de Puiseux r\'eelles  un
exemple de fibration $X \to \P^1_{R}$  \`a fibres g\'eom\'etriques des surfaces quadriques int\`egres,
et d'un  corps $F$ contenant $R$, tels que $X(R)$ soit semi-alg\'ebriquement connexe, et que
 l'application $H^3(F,\Z/2) \to H^3_{\on{nr}}(F(X)/F,\Z/2)$
ne soit pas surjective.
\end{rmk}

\begin{rmk}
La proposition  \ref{H3nrRpincquad} (a) g\'en\'eralise \cite[Th\'eor\`eme 4.5]{colliot2024certaines}.
Comme observ\'e dans \cite[Remarque 4.6]{colliot2024certaines},
Benoist et Wittenberg ont \'etabli que la fl\`eche
$H^3(R,\Z/2) \to H^3_{\on{nr}}(R(X)/R,\Z/2)$
est un isomorphisme pour de bien plus larges classes
de solides $X/R$. Plus pr\'ecis\'ement,
  sur tout corps
r\'eel clos $R$, la combinaison de
\cite[Proposition 5.2]{benoist2020integral1} et \cite[Theorem 8.1(i)]{benoist2020integral2} \'etablit que pour
tout solide projectif et lisse g\'eom\'etriquement unir\'egl\'e et satisfaisant
$H^2(X,O_{X})=0$, si $X(R)$ est semi-alg\'ebriquement connexe, alors
$H^3(R,\Q/\Z(2)) \to H^3_{\on{nr}}(X,\Q/\Z(2))$ est un isomorphisme.
\end{rmk}

\begin{cor}\label{H3Rnrdeuxquad}
Soit $R$ un corps r\'eel clos et soit $X \subset \P^5_{R}$ une intersection compl\`ete lisse de deux quadriques. Supposons $X(R) \neq \emptyset$. Supposons $X(R)$ semi-alg\'ebrique\-ment connexe.
Alors l'application $ H^3(R, \Z/2) \to H^3_{\on{nr}}(R(X), \Z/2)$
est un isomorphisme.
\end{cor}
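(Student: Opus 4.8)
The plan is to reduce the statement to Proposition \ref{H3nrRpincquad}(a) by passing to a quadric surface fibration over $\P^1_R$. Since $R$ is real closed it is a perfect field of characteristic different from $2$, and by hypothesis $X(R)$ is nonempty, so we may fix an $R$-point $P$ of $X$. Applying Theorem \ref{theorem3.2} with $n=5$, we obtain a smooth projective $R$-variety $X'$ that is $R$-birational to $X$, equipped with a morphism $X'\to\P^1_R$ whose geometric fibres are integral quadrics. Since $\dim X'=\dim X=3$, the generic fibre of $X'\to\P^1_R$ is a quadric of dimension $d=2$, which is exactly the case treated in Proposition \ref{H3nrRpincquad}(a).

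The one point that needs a short argument is that $X'(R)$ is semi-algebraically connected. Here I would invoke the birational invariance of the number of semi-algebraically connected components of the real locus of a smooth projective variety (recalled in the proof of Lemma \ref{connexe-invariant}; see \cite[\S 13]{delfs1981semialgebraicII} and \cite[Theorem 3.4.12]{bochnak1998real}): as $X$ and $X'$ are smooth, projective and $R$-birational, $X(R)$ and $X'(R)$ have the same number of such components. Since $X(R)$ is nonempty and semi-algebraically connected, $X'(R)$ is too.

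With this in hand, Proposition \ref{H3nrRpincquad}(a) applied to $X'\to\P^1_R$ gives that $H^3(R,\Z/2)\to H^3_{\on{nr}}(R(X')/R,\Z/2)$ is an isomorphism. Finally, since $X'$ is $R$-birational to $X$ one has $R(X')\cong R(X)$ as $R$-algebras, and unramified cohomology depends only on the function field, so $H^3_{\on{nr}}(R(X')/R,\Z/2)=H^3_{\on{nr}}(R(X)/R,\Z/2)$; the corollary follows. There is no genuine obstacle in this argument: it is merely the assembly of Theorem \ref{theorem3.2}, the birational invariance of semi-algebraic connectedness, and Proposition \ref{H3nrRpincquad}(a), with the only mild subtlety being the verification of the connectedness of $X'(R)$ just described.
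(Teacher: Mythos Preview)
Your argument is correct and follows exactly the paper's own proof, which simply says ``On combine le th\'eor\`eme \ref{theorem3.2} et la proposition \ref{H3nrRpincquad}.'' The only addition you make is to spell out why $X'(R)$ is semi-alg\'ebriquement connexe via birational invariance for smooth projective varieties, a point the paper leaves implicit.
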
 

\begin{proof}
     On combine le th\'eor\`eme \ref{theorem3.2} et la proposition \ref{H3nrRpincquad}.
\end{proof}

\begin{prop}\label{fibP1Rrelplus4}
Soit $F$ un corps de caract\'eristique diff\'erente de $2$.
Soit $X\to \P^1_{F}$ une fibration en quadriques de dimension relative $n\geq 4$. 
On suppose que $X/F$ est projective et lisse
et  que  les fibres g\'eom\'etriques singuli\`eres  ont un seul point singulier
(pinceau de Lefschetz).
On suppose en outre que la fibre g\'en\'erique, qui est d\'efinie par
une forme quadratique $q$ sur $F(\P^1)$  de rang $n+2\geq 6 $,
n'est pas semblable \`a une forme d'Albert anisotrope.
Supposons $X(F)\neq \emptyset$.
Alors  l'application $H^3(F,\Z/2) \to H^3_{\on{nr}}(F(X)/F, \Z/2)$ est
un isomorphisme.
\end{prop}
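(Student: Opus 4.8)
The plan is to treat injectivity and surjectivity separately. Injectivity is immediate: $X$ is smooth and $X(F)\neq\emptyset$, so specialisation at an $F$-point of $X$ splits the natural map $H^3(F,M)\to H^3_{\on{nr}}(F(X)/F,M)$ for every torsion module $M$, in particular for $M=\Z/2$ and for $M=\Q/\Z(2)$. For surjectivity, I would first establish that $H^3(F,\Q/\Z(2))\to H^3_{\on{nr}}(F(X)/F,\Q/\Z(2))$ is surjective; combined with injectivity this makes it an isomorphism, and restricting it to $2$-torsion subgroups then yields the desired isomorphism with $\Z/2$-coefficients. Here one uses, exactly as in the proof of Proposition \ref{H3nrRpincquad}, that Merkurjev's theorem (valid since $\on{char}(F)\neq 2$) identifies $H^3(-,\Z/2)$ with the $2$-torsion of $H^3(-,\Q/\Z(2))$; since this identification is compatible with residue maps, it also identifies $H^3_{\on{nr}}(F(X)/F,\Z/2)$ with the $2$-torsion of $H^3_{\on{nr}}(F(X)/F,\Q/\Z(2))$.

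To prove the surjectivity claim, let $\alpha\in H^3_{\on{nr}}(F(X)/F,\Q/\Z(2))$. It is a fortiori unramified for the discrete valuations of $F(X)$ trivial on $K\coloneqq F(\P^1)$, so $\alpha\in H^3_{\on{nr}}(F(X)/K,\Q/\Z(2))$, and $F(X)=K(q)$ where $q$ is the generic fibre quadric over $K$, of rank $n+2\geq 6$. If $q$ is not an Albert form, Theorem \ref{KRS}(ii),(iii) gives that $H^3(K,\Q/\Z(2))\to H^3_{\on{nr}}(K(q)/K,\Q/\Z(2))$ is surjective; if $q$ is an Albert form, then it cannot be anisotropic (a rank-$6$ form similar to an anisotropic Albert form is itself an anisotropic Albert form), so it is isotropic, $K(q)$ is a purely transcendental extension of $K$, and the same map is an isomorphism. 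In either case $\alpha$ is the image of some $\beta\in H^3(K,\Q/\Z(2))$.

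It then remains to show that $\beta\in H^3_{\on{nr}}(F(\P^1)/F,\Q/\Z(2))=H^3(F,\Q/\Z(2))$, which is the step where the hypotheses on the fibration are used. Since $\P^1_F$ is a smooth curve, it suffices to check that $\partial_P(\beta)=0$ for every closed point $P\in\P^1_F$. As the pencil is Lefschetz, the fibre $X_P$ is an integral quadric whose associated quadratic form has rank $n+2$ when $X_P$ is smooth and rank $n+1$ at the finitely many singular fibres, hence rank $\geq 5$ in all cases; moreover, $X$ being smooth and $X_P$ reduced, the valuation $v_{X_P}$ of $F(X)$ centred at the generic point of $X_P$ has ramification index $1$ over $P$. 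Compatibility of residues then yields $\on{res}_{F(X_P)/F(P)}\big(\partial_P(\beta)\big)=\partial_{v_{X_P}}(\alpha)=0$, so $\partial_P(\beta)$ lies in the kernel of $\Br(F(P))\to\Br(F(X_P))$. But $F(X_P)$ contains the function field of a smooth quadric of rank $\geq 5$ over $F(P)$ (plus one transcendental element when $X_P$ is a cone), and the relative Brauer group of the function field of a quadric of rank $\geq 5$ is trivial; hence $\partial_P(\beta)=0$. As this holds for all $P$, $\beta$ is unramified on $\P^1_F$, so $\beta\in H^3(F,\Q/\Z(2))$, and therefore $\alpha$ lies in the image of $H^3(F,\Q/\Z(2))$, as wanted.

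The delicate point — and the reason the bound $n\geq 4$ is sharp for this approach — is precisely the vanishing of $\ker\big(\Br(F(P))\to\Br(F(X_P))\big)$: for a rank-$4$ quadric with trivial discriminant this kernel contains the associated quaternion class, so if a singular fibre had rank $4$, which happens exactly when $n=3$, the residue comparison would collapse. Everything else is routine bookkeeping: the identification $H^3_{\on{nr}}(F(\P^1)/F,-)=H^3(F,-)$, the passage from $\Q/\Z(2)$ to $\Z/2$ via $2$-torsion, and the standard compatibility of residues with the valuation $v_{X_P}$ are all standard once the quadratic-form inputs above are in place.
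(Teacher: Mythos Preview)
Your proof is correct and follows essentially the same route as the paper: lift $\alpha$ to $\beta\in H^3(F(\P^1))$ via Kahn--Rost--Sujatha, then kill all residues of $\beta$ using that each fibre is (the cone over) a quadric of rank $\geq 5$, so the relative Brauer kernel vanishes. The only cosmetic differences are that the paper works directly with $\Z/2$-coefficients (citing \cite[Theorem~5]{kahn1998unramified}) rather than passing through $\Q/\Z(2)$ and $2$-torsion, and that it disposes of the isotropic case by observing that $X$ is then $F$-rational rather than arguing via pure transcendence of $K(q)/K$.
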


\begin{proof}
Rappelons qu'une forme d'Albert est de rang $6$. Si la forme $q$ est isotrope, alors la fibre g\'en\'erique est une quadrique lisse avec un point rationnel, donc est rationnelle sur le corps $F(\P^1)$,
et $X$ est rationnelle sur $F$. Le r\'esultat est alors clair. Supposons d\'esormais $q$ anisotrope. 
Soit $\alpha \in H^3_{\on{nr}}(F(X)/F, \Z/2)$. Comme $q$
n'est pas semblable \`a une forme d'Albert, d'apr\`es 
\cite[Theorem 5]{kahn1998unramified},
$\alpha$ est l'image d'une classe $\beta \in H^3(F(\P^1),\Z/2)$.
Les r\'esidus de cette classe en tout point  ferm\'e $m \in \P^1_{k}$
sont dans le noyau de $H^2(F(m), \Z/2) \to H^2(F(X_{m}), \Z/2)$.
Comme chaque $X_{m}$ est d\'efini par une forme quadratique de rang au moins 5,
ce noyau est trivial. Ainsi $\beta$ est dans $H^3(F,\Z/2)$.
\end{proof}

\begin{cor}\label{2quadsurFngeq7} 
Soit $F$ un corps de caract\'eristique diff\'erente de $2$, soit $n\geq 7$ et soit $X \subset \P^n_F$ une intersection compl\`ete lisse de deux quadriques. Supposons $X(F) \neq \emptyset$. 
Soit $Y \to \P^1_F$ la fibration en quadriques associ\'ee \`a un $F$-point de $X$ comme dans le th\'eor\`eme \ref{theorem3.2}.
Si $n=7$, supposons que la fibre g\'en\'erique de cette fibration n'est pas d\'efinie par une forme d'Albert anisotrope sur $F(\P^1)$. Alors l'application $H^3(F,\Z/2) \to H^3_{\on{nr}}(F(X)/F, \Z/2)$ est un isomorphisme.
\end{cor}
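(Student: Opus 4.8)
The plan is to deduce the statement from Proposition~\ref{fibP1Rrelplus4}, applied to the quadric fibration attached by Theorem~\ref{theorem3.2} to an $F$-point of $X$; the only substantive point is a count of ranks.

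First I would fix a point $P\in X(F)$ and apply Theorem~\ref{theorem3.2}, obtaining a smooth projective $F$-variety $Y$, which is $F$-birational to $X$, together with a morphism $Y\to\P^1_F$ whose generic fibre is a smooth quadric and all of whose geometric fibres have at most one singular point; in particular the singular geometric fibres have exactly one singular point. Since $\dim X=n-2$, the fibration $Y\to\P^1_F$ has relative dimension $d=n-3$, so its generic fibre is the quadric attached to a non-degenerate quadratic form $q$ over $F(\P^1)$ of rank $d+2=n-1$; as $n\geq 7$ this gives $d\geq 4$ and $\operatorname{rang}(q)=n-1\geq 6$. Because $Y$ is $F$-birational to $X$, the birational invariance of unramified cohomology gives $H^3_{\on{nr}}(F(Y)/F,\Z/2)=H^3_{\on{nr}}(F(X)/F,\Z/2)$, so it suffices to work with $Y$. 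I would also record that $Y(F)\neq\emptyset$: a smooth complete intersection of two quadrics of dimension $\geq 2$ carrying an $F$-point is $F$-unirational, hence so is its birational model $Y$, which therefore has an $F$-point (this is also implicit in the construction of \cite{colliot1987intersectionsI}).

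Next I would check that $Y\to\P^1_F$ meets all the hypotheses of Proposition~\ref{fibP1Rrelplus4}: it is projective and smooth, of relative dimension $d=n-3\geq 4$; its singular geometric fibres have a single singular point; $Y(F)\neq\emptyset$; and the generic fibre is defined by a form $q$ with $\operatorname{rang}(q)=n-1\geq 6$. The one point requiring care is that $q$ should not be similar to an anisotropic Albert form. Recall that an Albert form has rank $6$. If $n\geq 8$, then $\operatorname{rang}(q)=n-1\geq 7$, so $q$ cannot be similar to any $6$-dimensional form and the condition holds automatically. If $n=7$, then $\operatorname{rang}(q)=6$; since the smooth quadric $\{q=0\}$ determines $q$ up to similarity, the condition ``$q$ not similar to an anisotropic Albert form'' is exactly the extra hypothesis imposed on the fibration in the statement of the corollary. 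Proposition~\ref{fibP1Rrelplus4} then gives that $H^3(F,\Z/2)\to H^3_{\on{nr}}(F(Y)/F,\Z/2)$ is an isomorphism, and combining this with the identification $H^3_{\on{nr}}(F(Y)/F,\Z/2)=H^3_{\on{nr}}(F(X)/F,\Z/2)$ concludes the proof.

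The argument has no deep obstacle; the work is bookkeeping, and the one thing to get right is that the generic-fibre form has rank exactly $n-1$. This is what makes the ``anisotropic Albert form'' exception of Proposition~\ref{fibP1Rrelplus4} automatically vacuous when $n\geq 8$ (rank $\geq 7$), while for $n=7$ (rank $6$) it must be imposed by hand, which is why the corollary carries the separate hypothesis in that case — exactly paralleling how Corollary~\ref{H3Rnrdeuxquad} (the case $n=5$, relative dimension $2$) is obtained from Theorem~\ref{theorem3.2} together with Proposition~\ref{H3nrRpincquad}.
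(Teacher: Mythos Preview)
Your proof is correct and follows exactly the paper's approach: the paper's proof is the single sentence ``On combine le th\'eor\`eme \ref{theorem3.2} et la proposition \ref{fibP1Rrelplus4}'', and you have simply unpacked this combination, with the correct rank count (relative dimension $n-3$, hence generic-fibre form of rank $n-1$). One minor remark: your detour through $F$-unirationality to obtain $Y(F)\neq\emptyset$ is unnecessary---since $X$ is smooth with an $F$-point and $Y$ is proper and $F$-birational to $X$, the Lang--Nishimura lemma gives $Y(F)\neq\emptyset$ directly.
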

\begin{proof}
On combine le th\'eor\`eme \ref{theorem3.2} et la proposition \ref{fibP1Rrelplus4}. 
\end{proof}


\begin{prop}\label{fibP1Rrel3}
Soit $R$ un corps r\'eel clos et soit $X\to \P^1_{R}$ une fibration en quadriques de dimension relative $n= 3$. 
On suppose que la vari\'et\'e  $X/R$, de dimension $4$, est projective et lisse
et  que  chaque fibre g\'eom\'etrique a au plus un point singulier. Supposons que $X(R)$ est semi-alg\'ebriquement connexe.
Alors pour tout corps $F$ contenant $R$,
 l'application $H^3(F,\Z/2) \to H^3_{\on{nr}}(F(X)/F, \Z/2)$ est
un isomorphisme.
\end{prop}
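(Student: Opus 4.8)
The plan is to adapt the argument of Proposition~\ref{fibP1Rrelplus4}, adjoined with a specialisation over $\R$ in the spirit of Proposition~\ref{H3nrRpincquad}(a). Let $q$ be the rank-$5$ quadratic form over $R(\P^1)$ defining the generic fibre of $X\to\P^1_R$; for any field $F\supseteq R$, $F(X)$ is the function field of the quadric $q$ over $F(\P^1)$. If $q$ is isotropic over $F(\P^1)$, then $X_F$ is $F$-rational and the statement is clear; so I would assume $q$ anisotropic over $F(\P^1)$, hence also over $R(\P^1)$. Injectivity of $H^3(F,\Z/2)\to H^3_{\on{nr}}(F(X)/F,\Z/2)$ comes from the fact that $X$ has an $R$-point (since $X(R)\neq\emptyset$), which splits the restriction $H^3(F,\Z/2)\to H^3(F(X),\Z/2)$. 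For surjectivity I would start from a class $\alpha\in H^3_{\on{nr}}(F(X)/F,\Z/2)\subseteq H^3_{\on{nr}}(F(X)/F(\P^1),\Z/2)$: as $q$ has rank $5\neq 6$ it is not similar to an Albert form, and \cite[Theorem~5]{kahn1998unramified} provides a class $\beta\in H^3(F(\P^1),\Z/2)$ of which $\alpha$ is the image.

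The central point is to show that, after multiplying $q$ by one of its diagonal coefficients (which changes neither the quadric nor the fibration, and makes $q$ represent $1$ over $R(\P^1)$), $q$ is a subform of the Pfister form $\varphi\coloneqq\langle\!\langle -1,-1,h\rangle\!\rangle$ over $R(\P^1)$, for a suitable $h\in R(\P^1)^\times$. I would deduce this from Theorem~\ref{wittgen} by a fibrewise check. Since $X(R)$ is compact, semi-algebraically connected and nonempty, its image $I$ under the proper map $X(R)\to\P^1(R)$ is a closed sub-interval of the circle $\P^1(R)$; as $q$ is anisotropic over $R(\P^1)$, Theorem~\ref{wittgen} forbids $q_c$ from being isotropic for almost all $c$, so $I\neq\P^1(R)$ and $\partial I=\{\alpha_1,\alpha_2\}$ consists of two points (after a coordinate change on $\P^1$ I may assume $\infty\notin\partial I$). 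Since $q$ represents $1$ over $R(\P^1)$, the form $q_c$ represents $1$ over $R$ for almost all $c$, hence is isotropic when $c\in I^\circ$ and isomorphic to $\langle 1,1,1,1,1\rangle$ when $c\notin I$. I would take for $h=\pm(u-\alpha_1)(u-\alpha_2)$ the sign chosen so that $h>0$ on $I^\circ$ and $h<0$ on $\P^1(R)\setminus I$. Then for almost all $c$: if $c\in I^\circ$ we have $h(c)>0$, so $\varphi_c$ is hyperbolic and contains the isotropic form $q_c$; if $c\notin I$ we have $h(c)<0$, so $\varphi_c$ is the positive definite form of rank $8$, which contains $q_c\cong\langle 1,1,1,1,1\rangle$. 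Theorem~\ref{wittgen} then gives $q\subset\varphi$ over $R(\P^1)$, hence over $F(\P^1)$. Thus $q$ is a Pfister neighbour of $\varphi$: $\varphi$ becomes isotropic, hence hyperbolic, over $F(X)=F(\P^1)(q)$, and the class $(-1,-1,h)\in H^3(F(\P^1),\Z/2)$ (the Arason invariant of $\varphi$) vanishes in $H^3(F(X),\Z/2)$; in particular $\alpha$ is also the image of $\beta+n\,(-1,-1,h)$ for every $n\in\Z/2$.

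I would then control the ramification of $\beta$. Since $\alpha$ is unramified, each residue $\partial_x\beta$ at a closed point $x$ of $\P^1_F$ lies in $\Ker[H^2(F(x),\Z/2)\to H^2(F(X_x),\Z/2)]$, which is zero as soon as the non-degenerate part of $X_x$ has rank $\ge 5$, and equals $\langle C(q_x')\rangle$ when $X_x$ is singular with rank-$4$ part $q_x'$ of trivial discriminant and anisotropic. Now the singular fibres of $X_F\to\P^1_F$ are the base changes of those of $X_R\to\P^1_R$, lying over the closed points of $\{\det q=0\}\subset\P^1_R$; since a rank-$4$ form over $\R$ is either hyperbolic, isotropic of signature $\pm 2$, or definite, and the vanishing eigenvalue changes sign exactly at the boundary of $I$, one checks that $q_x'$ is anisotropic of trivial discriminant only over $\alpha_1$ and $\alpha_2$, and that there $F(x)=F$, $q_x'\cong\pm\langle 1,1,1,1\rangle$ and $C(q_x')=(-1,-1)$. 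Hence $\beta$ is unramified away from $\{\alpha_1,\alpha_2\}$, with $\partial_{\alpha_i}\beta\in\{0,(-1,-1)\}$; Faddeev reciprocity on $\P^1_F$ forces $\partial_{\alpha_1}\beta=\partial_{\alpha_2}\beta$, with common value $r$. Since $(-1,-1,h)$ is ramified exactly at $\alpha_1,\alpha_2$ with residue $(-1,-1)$, the class $\beta-n\,(-1,-1,h)$, with $n=0$ if $r=0$ and $n=1$ otherwise, has all residues zero, hence lies in $H^3(F,\Z/2)$; as its image in $H^3(F(X),\Z/2)$ is $\alpha$, this exhibits $\alpha$ in the image of $H^3(F,\Z/2)$ and finishes the argument.

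The hard part will be the construction of the embedding $q\subset\langle\!\langle -1,-1,h\rangle\!\rangle$: the signs of $q_c$ along the circle $\P^1(R)$ are not free (a global rescaling of $q$ flips its signature along the Möbius monodromy of $\P^1(R)$), and it is precisely the choice of a rescaling making $q$ represent $1$ that makes the fibrewise check possible before applying Theorem~\ref{wittgen}. One also has to treat separately the degenerate cases where $I$ reduces to a point.
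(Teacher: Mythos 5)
Votre démonstration est correcte et suit essentiellement la même stratégie que celle de l'article : descente de $\alpha$ à $H^3(F(\P^1),\Z/2)$ via \cite[Theorem 5]{kahn1998unramified}, analyse des résidus montrant que la ramification est concentrée aux deux points réels où la fibre est un cône sur une quadrique anisotrope (identifiés dans l'article comme l'ensemble $T$ de cardinal $0$ ou $2$, chez vous comme le bord de l'intervalle image $I$ — c'est la même chose), réciprocité de Faddeev, puis le théorème \ref{wittgen} appliqué fibre à fibre pour montrer que $q$ est une sous-forme de $\langle\!\langle -1,-1,h\rangle\!\rangle$ et donc que $(-1,-1,h)$ s'annule dans $H^3(F(X),\Z/2)$. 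Les seules différences sont cosmétiques (normalisation $T=\{0,\infty\}$ et $h=u$ dans l'article contre $h=\pm(u-\alpha_1)(u-\alpha_2)$ chez vous ; le cas dégénéré où $I$ serait réduit à un point, que vous signalez, ne se produit d'ailleurs pas puisque l'uniformisante change de signe en chaque point de $T$).
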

 \begin{proof}

 Notons $T\subset \P^1(R)$ l'ensemble des points $P$ tels que la fibre $X_P$ soit d\'efinie par une forme de rang $4$ sur $R$ anisotrope. Une \'equation locale de $X\to \P^1_R$ au voisinage d'un point $P\in T$ est de la forme $\sum_{i=0}^3 a_ix_i^2+bx_4^2=0$, o\`u $a_i\in O_{\P^1_R,P}^\times$ et $b$ est une uniformisante de $O_{\P^1_R,P}$ et $a_i(P)>0$ pour tout $i$.
Pour tout $Q$ proche de $P$, l'ensemble $X_Q(R)$ est vide ou non selon que $b(P)>0$ ou $b(P)<0$. Par contre, soit $P'\in \P^1(R)\setminus T$. On v\'erifie sur l'\'equation locale au voisinage de $P'$ que pour $Q'$ proche de $P'$ la vacuit\'e de $X_{Q'}(R)$ ne d\'epend pas de $Q'$. On d\'eduit que la cardinalit\'e $|T|$ de $T$ est paire. Si on avait $|T|\geq 4$, alors $X(R)$ aurait au moins $2$ composantes semi-alg\'e\-bri\-quement connexes. Donc $|T|$ est \'egal \`a $0$ o\`u $2$. Si $T$ est vide, alors la projection $X(R) \to \P^1(R)$ est surjective
et le th\'eor\`eme \ref{wittgen} assure que $X \to \P^1_{R}$ a une section
et donc que $X$ est rationnelle sur $R$.
Supposons que $|T|=2$.
On peut donc supposer $T=\{0,\infty\}$.

 Soit $\alpha \in H^3_{\on{nr}}(F(X)/F, \Z/2)$. D'apr\`es \cite[Theorem 5]{kahn1998unramified} (cf. preuve de la proposition \ref{H3nrRpincquad}),
$\alpha$ est l'image d'une classe $\beta \in H^3(F(\P^1),\Z/2)$.
Les r\'esidus de cette classe en tout point  ferm\'e $m \in \P^1_{F}$
sont dans le noyau de $H^2(F(m), \Z/2) \to H^2(F(X_{m}), \Z/2)$.
Ce noyau est trivial si $X_{m}/F$ est lisse ou si $X_m$ contient un point
$F(m)$-rationnel lisse. Soit  $m$  un point ferm\'e de $\P^1_{F}$ 
dont la fibre $X_{m}/F(m)$ n'est pas lisse et est d\'efinie par
une forme quadratique de rang $4$ anisotrope.
Alors l'image de $m$ par la projection  $\P^1_{F}\to \P^1_{R}$ est
un point $P\in T$, donc $m=0$ ou $m=\infty$.

 Le r\'esidu de $\alpha$ en tout point ferm\'e $m$ de $\P^1_{F}\setminus \{0,\infty\}$ est trivial, car ce r\'esidu est 
dans le noyau de $H^2(F(m),\Z/2) \to H^2(F(X_{m}),\Z/2)$.

En $0$ et $\infty$, ce r\'esidu est $0$ ou la classe de $(-1,-1)_{F}$.
De plus, par r\'eciprocit\'e \cite[Proposition 2.2]{rost1996chow}, la somme de ces deux r\'esidus est  nulle. Donc
\`a addition pr\`es d'un \'el\'ement de $H^3(F,\Z/2)$ on a
  $\beta=0$  ou $\beta=(-1,-1,u)_{F}$, o\`u $u$ est une fonction rationnelle sur $\P^1_{R}$
de diviseur $(0)-(\infty)$. Pour conclure, il suffit de montrer que l'image de $(-1,-1,u)_{R}$ dans  $H^3(R(X), \Z/2)$ est nulle.
 
 On peut supposer que la fibre g\'en\'erique  de $X\to \P^1_R$ est d\'efinie par une forme quadratique $q$ sur $R(\P^1)$  de rang $5$ qui repr\'esente $1$.
  Il suffit de montrer que la forme $q$ est une sous-forme de $\langle \!\langle -1,-1,u\rangle \!\rangle$.
  D'apr\`es le th\'eor\`eme \ref{wittgen}, ceci  vaut si pour presque tout $c\in R$,
  la forme $q_{c}$ est une sous-forme de $\langle \!\langle -1,-1,c\rangle \!\rangle$.
  Pour presque tout $c>0$ la forme  $q_{c}$ de rang $5$ est isotrope et la forme de Pfister $\langle \!\langle-1,-1,c\rangle \!\rangle$ est isotrope donc
  totalement hyperbolique. Ainsi $q_{c}$ est une sous-forme de $\langle \!\langle-1,-1,c\rangle \!\rangle$.
  Pour presque tout $c<0$, la forme  $q_{c}$ de rang $5$ est anisotrope et repr\'esente 1, donc
  est isomorphe \`a $ \langle 1,1,1,1,1\rangle $. La forme   $\langle \!\langle-1,-1,c\rangle \!\rangle$ est isomorphe
  \`a  $\langle \!\langle-1,-1,-1\rangle \!\rangle$. Ainsi $q_{c}$ est une sous-forme de $\langle \!\langle-1,-1,c\rangle \!\rangle$.
     \end{proof}

     \begin{prop}\label{fibquaRconnexenqeq3}
     Soit $R$ un corps r\'eel clos et soit $X\to \P^1_{R}$ une fibration en quadriques de dimension relative $ n \geq  3$. 
On suppose que la vari\'et\'e  $X/R$ est projective et lisse
et  que  chaque fibre g\'eom\'etrique a au plus un point singulier.
Supposons que $X(R)$ est semi-alg\'ebrique\-ment connexe.
Pour toute extension $F/R$, l'application $H^3(F,\Z/2) \to H^3_{\on{nr}}(F(X)/F, \Z/2)$ est
un isomorphisme.
\end{prop}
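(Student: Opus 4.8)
Le plan est de déduire l'énoncé des deux propositions précédentes. Le cas $n=3$ est exactement la proposition~\ref{fibP1Rrel3}; il ne reste donc qu'à traiter le cas $n\geq 4$. Fixons une extension $F/R$ et appliquons la proposition~\ref{fibP1Rrelplus4} au changement de base $X_F\to\P^1_F$. Toutes ses hypothèses sont immédiates par des vérifications de routine sur le changement de base : $X_F/F$ est projective et lisse, la condition que toute fibre géométrique singulière a un seul point singulier est préservée par l'extension des scalaires $R\subseteq F$, on a $X_F(F)\supseteq X(R)\neq\emptyset$, et la fibre générique de $X_F\to\P^1_F$ est la quadrique définie par une forme non dégénérée $q$ de rang $n+2\geq 6$ sur $F(\P^1)$. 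Le seul point à vérifier est que $q$ n'est pas semblable à une forme d'Albert anisotrope sur $F(\P^1)$. Pour $n\geq 5$ c'est automatique, une forme d'Albert étant de rang $6$; le point délicat est donc le cas $n=4$, où $q$ est de rang $6$. Une forme de rang $6$ est semblable à une forme d'Albert si et seulement si son discriminant signé est trivial, c'est-à-dire (la forme étant de rang $6$) si et seulement si $-\det(q)$ est un carré; cette condition, comme l'anisotropie, ne dépend que de la classe de similitude de $q$, de sorte qu'on peut parler de la forme $q$ associée à $X\to\P^1_R$ sur $R(\P^1)$, bien définie à similitude près, que l'on normalise de manière à représenter $1$.

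La première étape, pour $n=4$, est de montrer que $q$ n'est pas une forme d'Albert anisotrope sur $R(\P^1)$. Si $q$ est isotrope sur $R(\P^1)$, c'est clair. Si $q$ est anisotrope et si, par l'absurde, $-\det(q)$ était un carré, alors $\det(q)$ serait l'opposé d'un carré, donc pour presque tout $c\in\P^1(R)$ la spécialisation $q_c$ serait une forme de rang $6$ sur $R$ avec $\det(q_c)<0$; une telle forme est indéfinie, donc isotrope sur $R$. D'après le théorème~\ref{wittgen} (applicable car $q$ est de rang $6\geq 3$), la forme $q$ serait alors isotrope sur $R(\P^1)$, contradiction. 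Ainsi, soit $q$ est isotrope sur $R(\P^1)$, soit $q$ est anisotrope sur $R(\P^1)$ et $-\det(q)$ n'y est pas un carré.

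La seconde étape est de transférer ceci à $F(\P^1)$. Si $q$ est isotrope sur $R(\P^1)$, elle le reste sur $F(\P^1)$, et n'y est en particulier pas une forme d'Albert anisotrope; il en va de même si $\sqrt{-1}\in F$, car alors $C\coloneqq R(\sqrt{-1})\subseteq F$ et $q$ est déjà isotrope sur $C(\P^1)$ en vertu du théorème de Tsen ($C(\P^1)$ étant le corps des fonctions d'une courbe sur le corps algébriquement clos $C$, donc un corps $C_1$), d'où $q$ isotrope sur $C(\P^1)\subseteq F(\P^1)$. Dans le cas restant, où $\sqrt{-1}\notin F$ et $q$ est anisotrope sur $R(\P^1)$, on vérifie que $-\det(q)$, qui n'est pas un carré dans $R(\P^1)=R(u)$, ne l'est pas davantage dans $F(u)$ : en l'écrivant $c\prod_i p_i^{e_i}$ avec $c\in R^\times$ et les $p_i$ des polynômes unitaires irréductibles distincts de $R[u]$ (chacun étant, $R$ étant réel clos, linéaire ou de la forme $(u-\alpha)^2+\beta^2$ avec $\beta\neq 0$), chaque $p_i$ reste irréductible sur $F$ précisément parce que $-1$ n'est pas un carré dans $F$; les exposants gardent donc leur parité et $c$ sa classe modulo les carrés, si bien que $-\det(q)$ est un carré dans $F(u)$ si et seulement s'il l'était dans $R(u)$, ce qui n'est pas le cas. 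Ainsi $q$ sur $F(\P^1)$, de rang $6$ avec $-\det(q)$ non carré, n'est semblable à aucune forme d'Albert. Dans tous les cas, la proposition~\ref{fibP1Rrelplus4} s'applique à $X_F\to\P^1_F$ et entraîne que $H^3(F,\Z/2)\to H^3_{\on{nr}}(F(X)/F,\Z/2)$ est un isomorphisme.

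Le point qui s'annonce le plus délicat est précisément l'exclusion, en dimension relative $4$, de l'exception \enquote{forme d'Albert anisotrope} de la proposition~\ref{fibP1Rrelplus4} : la combinaison de l'argument de spécialisation sur $R(\P^1)$ reposant sur le théorème~\ref{wittgen} avec la dichotomie (descente du discriminant, ou théorème de Tsen) permettant de passer à une extension $F$ quelconque. Le reste se réduit à une citation directe pour $n=3$ et à des vérifications de routine de changement de base.
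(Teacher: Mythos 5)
Your proof is correct, and its skeleton is the paper's: $n=3$ is Proposition \ref{fibP1Rrel3}, $n\geq 4$ is Proposition \ref{fibP1Rrelplus4}, and the only real work is excluding the anisotropic-Albert-form exception when $n=4$. Where you diverge is in how that exception is excluded. The paper does it in three lines of Brauer-group theory over $R(\P^1)$: every class of $\Br(R(\P^1))$ is split by $C(\P^1)$ (Tsen), so index equals exponent, so no biquaternion algebra over $R(\P^1)$ is a division algebra, so by Albert's theorem every Albert form over $R(\P^1)$ is isotropic. You instead work with the signed discriminant: Theorem \ref{wittgen} together with the sign of the determinant of the real specializations shows that an anisotropic rank-$6$ form over $R(\P^1)$ cannot have trivial signed discriminant, and you then check that this nontriviality survives the extension $R(u)\subset F(u)$ when $\sqrt{-1}\notin F$ (monic irreducibles of $R[u]$ stay irreducible over $F$, so squares of $F(u)$ lying in $R(u)$ are squares of $R(u)$), falling back on Tsen over $C(\P^1)\subset F(\P^1)$ when $\sqrt{-1}\in F$. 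Your route is longer, but it buys something real: Proposition \ref{fibP1Rrelplus4} has to be applied to $X_F\to\P^1_F$, so the Albert condition must be verified for the form over $F(\P^1)$, where the paper's ``index equals exponent'' argument is not available; the published proof argues only over $R(\P^1)$ and leaves the passage to $F(\P^1)$ implicit, and your dichotomy (isotropy descends along field extensions; otherwise the discriminant stays nontrivial) is exactly the bookkeeping needed to justify it. One could shorten your $n=4$ analysis by substituting the paper's Brauer-group argument for your first (specialization) step and keeping only your second step, but as written your proof is complete and correct.
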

\begin{proof}
Pour $n=3$, c'est la proposition \ref{fibP1Rrel3}. Pour $n\geq 4$,
cela r\'esulte imm\'ediatement de la proposition \ref{fibP1Rrelplus4}, sauf si
$n=4$ et la forme quadratique $q$ d\'efinissant la fibre g\'en\'erique
est une forme d'Albert anisotrope. Mais sur le corps $R(\P^1)$,
indice et exposant des \'el\'ements du groupe de Brauer co\"{\i}ncident
(tout \'el\'ement est annul\'e par passage \`a $R(\sqrt{-1})$). Ceci implique
qu'un produit tensoriel de deux alg\`ebres de quaternions n'est pas un corps
gauche, et ce dernier fait implique (Albert)  que toute forme d'Albert sur $R(\P^1)$ est isotrope.
\end{proof}

\begin{cor}
 Soit $X \subset \P^n_R$ une intersection compl\`ete lisse de deux quadriques sur un corps r\'eel clos $R$. Supposons $n \geq 6$ et $X(R)$ semi-alg\'ebriquement connexe.
Pour tout corps $F$ contenant $R$, l'application 
$H^3(F,\Z/2) \to H^3_{\on{nr}}(F(X)/F, \Z/2)$ est un isomorphisme.
\end{cor}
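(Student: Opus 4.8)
The plan is to reduce the statement to Proposition~\ref{fibquaRconnexenqeq3} by replacing $X$ with the birational quadric-bundle model over $\P^1$ produced by Theorem~\ref{theorem3.2}. Since $X(R)$ is semi-algebraically connected it is in particular non-empty, so one may choose a point $P\in X(R)$. Applying Theorem~\ref{theorem3.2} over $k=R$ to this point $P$ yields a smooth projective $R$-variety $Y$, $R$-birational to $X$, together with a quadric fibration $\pi\colon Y\to\P^1_R$ every geometric fibre of which has at most one singular point.

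The next step is to verify the hypotheses of Proposition~\ref{fibquaRconnexenqeq3} for $\pi$. As $\dim X=n-2$ and the base is $\P^1$, the relative dimension of $\pi$ equals $n-3$, which is $\geq 3$ precisely because $n\geq 6$; the potential appearance of an anisotropic Albert form in the borderline relative dimension $4$ (the case $n=7$) is already handled inside Proposition~\ref{fibquaRconnexenqeq3}. It remains to see that $Y(R)$ is semi-algebraically connected: the number of semi-algebraically connected components of the real locus of a smooth projective $R$-variety is a birational invariant (Lemma~\ref{connexe-invariant}), and $Y$ and $X$ are smooth projective and $R$-birational, so $Y(R)$ has the same number of components as $X(R)$, namely one.

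Proposition~\ref{fibquaRconnexenqeq3} then gives that, for every field $F$ containing $R$, the map $H^3(F,\Z/2)\to H^3_{\on{nr}}(F(Y)/F,\Z/2)$ is an isomorphism. Finally, since $X$ and $Y$ are $R$-birational the fields $F(X)$ and $F(Y)$ are isomorphic over $F$, and unramified cohomology depends only on the field extension up to $F$-isomorphism, so $H^3_{\on{nr}}(F(X)/F,\Z/2)=H^3_{\on{nr}}(F(Y)/F,\Z/2)$; combined with what was just obtained for $Y$ this finishes the proof.

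I do not expect a serious obstacle here: the argument is essentially an assembly of Theorem~\ref{theorem3.2}, the birational invariance both of the number of semi-algebraically connected components and of unramified cohomology, and Proposition~\ref{fibquaRconnexenqeq3}. The only points requiring a moment's attention are the bookkeeping of the relative dimension $n-3$ (which is exactly where the hypothesis $n\geq 6$ is used) and the observation that the Albert-form subtlety at $n=7$ is not a genuine difficulty, being already absorbed into Proposition~\ref{fibquaRconnexenqeq3}.
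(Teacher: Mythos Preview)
Your proof is correct and follows exactly the approach of the paper, which consists in combining Theorem~\ref{theorem3.2} with Proposition~\ref{fibquaRconnexenqeq3}; you have simply spelled out in detail the verification of the hypotheses (relative dimension $n-3\geq 3$, transfer of semi-algebraic connectedness via Lemma~\ref{connexe-invariant}, and the absorption of the Albert-form issue at $n=7$ into Proposition~\ref{fibquaRconnexenqeq3}).
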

\begin{proof} 

On combine le th\'eor\`eme \ref{theorem3.2} et la proposition \ref{fibquaRconnexenqeq3}.
\end{proof}

\begin{rmk}
  Dans le cas $n=6$,  Hassett, Koll\'ar et Tschinkel \cite{hassett2022rationality}
  ont montr\'e bien mieux :  toute telle $X \subset \P^6_R$ est rationnelle sur $R$.
\end{rmk}

Soit $R$ un corps r\'eel clos et $X$ une $R$-vari\'et\'e projective et lisse g\'eom\'etriquement connexe de dimension $n$.
Soit $s\geq 0$ le nombre de ses composantes semi-alg\'ebriques connexes. 
On a \'etabli dans \cite{colliot1990real} que l'on a
   $ H^{i}_{\on{nr}}(R(X)/R,\Z/2) \simeq (\Z/2)^s $
   pour $i\geq n+1$. En particulier, si $X(R)$ est semi-alg\'ebriquement connexe et $i\geq n+1$, alors l'application $H^i(R,\Z/2)\to H^i_{\on{nr}}(R(X)/R,\Z/2)$ est un isomorphisme.  
   
   Rassemblons ici les r\'esultats
   obtenus sur l'invariant $ H^{i}_{\on{nr}}(R(X)/R,\Z/2)$
   pour les fibrations en quadriques $X$ sur $\P^1_R$
   de dimension relative au plus $3$.

\begin{cor} 
Soit $R$ un corps r\'eel clos.
 Soit $X$ une $R$-vari\'et\'e projective et lisse munie d'un
 morphisme  $X \to \P^1_{R}$ dont la fibre g\'en\'erique est une quadrique de dimension  $d\geq 1$.
 Notons $\phi_i : H^{i}(R,\Z/2) \to H^{i}_{\on{nr}}(R(X)/R,\Z/2)$.

(a) Pour tout $d$ et $i\leq 1$, $\phi_i$ est un isomorphisme.

(b) Si $d=1$ et $X(R)$ est semi-alg\'ebriquement connexe,
alors $X$ est rationnelle et $\phi_i$ est un isomorphisme pour tout $i$.

(c) Si $d=2$ et $X(R)$ est semi-alg\'ebriquement connexe,
$\phi_i$ est un isomorphisme si $i\neq 2$. C'est un isomorphisme pour tout $i$ si toutes les fibres g\'eom\'etriques sont des quadriques avec au plus un point singulier.

(d) Si $d=3$ et $X(R)$ est semi-alg\'ebriquement connexe, $\phi_i$ est un isomorphisme si $i\neq 3$. C'est un isomorphisme pour tout $i$ si toutes les fibres g\'eom\'etriques sont des quadriques avec au plus un point singulier.
\end{cor}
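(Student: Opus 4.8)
Je démontrerais ce corollaire en assemblant les résultats des paragraphes précédents, degré par degré ; posons $C\coloneqq R(\sqrt{-1})$.

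Partie (a). Pour $i=0$, les deux membres valent $\Z/2$ puisque $X$, fibrée au-dessus de $\P^1_R$ à fibre générique géométriquement intègre, est géométriquement connexe. Pour $i=1$, on appliquerait le théorème de Tsen à la forme quadratique de rang $d+2\geq 3$ définissant la fibre générique de $X_C\to\P^1_C$ : celle-ci serait isotrope, donc rationnelle sur $C(\P^1)$, de sorte que $X_C$ serait une $C$-variété rationnelle et $H^1_{\text{\'et}}(X_C,\Z/2)=0$ ; la suite spectrale de Hochschild--Serre pour $C/R$ donnerait alors $H^1(R,\Z/2)\xrightarrow{\sim}H^1_{\text{\'et}}(X,\Z/2)=H^1_{\on{nr}}(R(X)/R,\Z/2)$, i.e.\ $\phi_1$ serait un isomorphisme.

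Degrés $i\geq\dim X+1=d+2$ et partie (b). Comme $X(R)$ est semi-algébriquement connexe (et non vide), le nombre $s$ de ses composantes semi-algébriquement connexes vaut $1$, et le résultat de \cite{colliot1990real} rappelé ci-dessus donnerait que $\phi_i$ est un isomorphisme pour $i\geq d+2$. Pour $d=1$, $X$ est une surface projective lisse géométriquement rationnelle (sa fibre générique est une conique, rationnelle sur $C(\P^1)$) ; $X(R)$ étant semi-algébriquement connexe et non vide, le théorème de Comessatti \cite{comessatti1912fondamenti}, dont la démonstration vaut sur tout corps réel clos, entraînerait la $R$-rationalité de $X$, et donc, par invariance $R$-birationnelle de la cohomologie non ramifiée \cite[\S 4]{colliot1995birational} et sa trivialité pour $\P^n_R$, que $\phi_i$ est un isomorphisme pour tout $i$. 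Il ne resterait alors, pour $d=2$, que les degrés $i=2,3$, et, pour $d=3$, que les degrés $i=2,3,4$ ; or $i=3$ pour $d=2$ (resp.\ $i=4$ pour $d=3$) est exactement la proposition \ref{H3nrRpincquad}(a) (resp.\ \ref{H3nrRpincquad}(b)).

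Degré $i=2$ et assertions sous hypothèse supplémentaire. Par la suite de Kummer (et $\Br_{\on{nr}}(R(X)/R)=\Br(X)$, valable pour $X$ propre et lisse) on a $H^2_{\on{nr}}(R(X)/R,\Z/2)\cong\Br(X)[2]$ et $H^2(R,\Z/2)\cong\Br(R)$ ; un $R$-point de $X$ rendant $\phi_2$ injectif, la surjectivité reviendrait à $\Br(X)=\Br(R)$. Comme les fibres d'une fibration en quadriques de dimension relative $\geq 2$ d'espace total lisse ont une composante géométriquement intègre de multiplicité $1$ (ce que garantit aussi l'hypothèse \enquote{toutes les fibres géométriques sont des quadriques avec au plus un point singulier}), la proposition \ref{br-surj}(c) donnerait que $\Br(R)\to\Br(X)$ est surjective, d'où l'égalité cherchée. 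Ceci établirait (c) pour $i\neq 2$ et (d) pour $i\neq 3$, ainsi que, sous l'hypothèse supplémentaire, le degré $i=2$ de (c) ; enfin, toujours sous cette hypothèse, le degré $i=3$ de (d) résulterait de la proposition \ref{fibquaRconnexenqeq3} appliquée à $F=R$. L'obstacle principal — les propositions \ref{H3nrRpincquad} et \ref{fibquaRconnexenqeq3}, ainsi que le calcul de \cite{colliot1990real} en grand degré, étant admis — serait le degré $i=2$, c'est-à-dire le fait que $\Br(X)$ se réduise à $\Br(R)$, qu'on obtiendrait via la proposition \ref{br-surj}(c) et la non-vacuité de $X(R)$.
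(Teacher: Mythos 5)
Votre d\'emonstration suit pour l'essentiel la m\^eme d\'ecomposition que celle de l'article : (a) via la rationalit\'e de $X_C$, les degr\'es $i\geq d+2$ via \cite{colliot1990real}, (b) via Comessatti, les degr\'es $i=3$ (pour $d=2$) et $i=4$ (pour $d=3$) via la proposition \ref{H3nrRpincquad}, le degr\'e $i=2$ via la proposition \ref{br-surj}(c), et le degr\'e $i=3$ de (d) sous l'hypoth\`ese suppl\'ementaire via la proposition \ref{fibquaRconnexenqeq3}. Tout cela est correct et co\"incide avec la preuve du texte.

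Le seul point \`a reprendre est l'affirmation g\'en\'erale selon laquelle les fibres d'une fibration en quadriques de dimension relative $\geq 2$ \`a espace total lisse auraient toujours une composante g\'eom\'etriquement int\`egre de multiplicit\'e $1$. D'une part, les fibres ferm\'ees de $X\to\P^1_R$ ne sont pas n\'ecessairement des quadriques (l'hypoth\`ese ne porte que sur la fibre g\'en\'erique) ; d'autre part, l'\'enonc\'e est faux sous cette lecture : la r\'esolution des deux points doubles conjugu\'es de $\set{x^2+y^2+t(z^2+w^2)=0}$ est un espace total lisse, fibr\'e en quadriques de dimension relative $2$, dont la fibre en $t=0$ a pour composantes sur $\R$ la transform\'ee stricte de $\set{x^2+y^2=0}$ et le diviseur exceptionnel au-dessus de la paire de points conjugu\'es, aucune des deux n'\'etant g\'eom\'etriquement int\`egre. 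Heureusement vous n'utilisez cette affirmation que pour le degr\'e $i=2$ de (d), donc pour $d=3$ : la fibre g\'en\'erique est alors d\'efinie par une forme de rang impair $5$, de sorte que $X$ est un mod\`ele projectif et lisse d'un mod\`ele admissible au sens de \cite{colliot1993groupe} dont toutes les fibres sont d\'efinies par des formes de rang au moins $3$, donc sont g\'eom\'etriquement int\`egres ; c'est \`a ce mod\`ele qu'il faut appliquer la proposition \ref{br-surj}(c). (Pour le degr\'e $i=2$ de (c), votre recours \`a l'hypoth\`ese suppl\'ementaire est correct et correspond exactement \`a la parenth\`ese de l'article.) Une fois cette justification remplac\'ee, votre preuve est compl\`ete.
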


\begin{proof}
D'apr\`es le paragraphe pr\'ec\'edent, il suffit de consid\'erer $i\leq d+1$.

(a) Soit $C\coloneqq R(\sqrt{-1})$. On a 
\[H^1(R,\Z/2)=H^1_{\text{\'et}}(X,\Z/2)=H^1_{\on{nr}}(R(X)/R,\Z/2).\]
La premi\`ere \'egalit\'e suit du fait que $H^1_{\text{\'et}}(X_C,\Z/2)=0$, la $C$-vari\'et\'e $X_C$ \'etant rationnelle.

(b) La rationalit\'e de $X$ est un cas particulier du th\'eor\`eme de Comessatti \cite{comessatti1912fondamenti} (cf. \cite[Paragraphe 10.2]{colliot2024certaines}). La bijectivit\'e des $\varphi_i$ s'ensuit.

(c) Pour $i=3$, voir la proposition \ref{H3nrRpincquad} (a). La deuxi\`eme partie de l'\'enonc\'e suit de la proposition \ref{br-surj} (c) et du fait que $X(R)\neq \emptyset$. (Comme on a $d\geq 2$, l'hypoth\`ese additionnelle entraîne que les fibres g\'eom\'etriques sont int\`egres.)

(d) L'application $\varphi_2$ est surjective par la proposition \ref{br-surj} (c) et elle est injective car $X(R)$ est non vide. Le cas $i=4$ suit de la proposition \ref{H3nrRpincquad} (b). La deuxi\`eme partie de l'\'enonc\'e suit de l’int\'egrit\'e des fibres g\'eom\'etriques et de la proposition \ref{fibquaRconnexenqeq3}. 
\end{proof}
 
\subsection{Certaines fibrations en coniques sur \texorpdfstring{$\P^2$}{P2}}
Soit $R$ un corps r\'eel clos. Dans \cite[Proposition 3.4, Corollaire 3.6]{benoist2020clemens}, Benoist et Wittenberg ont \'etabli la non-rationalit\'e de fibrations en coniques de la forme
$$y^2+z^2=g(u,v)$$
lorsque la courbe plane $\Gamma\subset \P^2_R$ d'\'equation affine $g(u,v)=0$ est lisse de genre au moins $2$. Il est naturel de consid\'erer le cas o\`u $\Gamma$ est singuli\`ere et de genre g\'eom\'etrique z\'ero. Ceci motive la proposition suivante.
 
\begin{prop}\label{BeWi}
Soit $R$ un corps r\'eel clos et soit $X \to \P^2_{R}$ une fibration en coniques dont un ouvert affine est
d\'efini par une \'equation
$$y^2+z^2=g(u,v)$$
o\`u $g(u,v)$ est un polyn\^{o}me g\'eom\'etriquement irr\'eductible,
de degr\'e pair, tel que $g\geq 0$ sur $\A^2(R)$.
Supposons que la courbe $\Gamma \subset \P^2_R$ d'\'equation affine $g(u,v)=0$ est \`a singularit\'es quadratiques ordinaires, est de genre  g\'eom\'etrique z\'ero, et ne coupe la droite
\`a l'infini $L$ qu'en des points lisses. Pour tout corps $F$ contenant $R$,
l'application
$$H^3(F,\Q/\Z(2)) \to H^3_{\on{nr}}(F(X)/F,\Q/\Z(2))$$
est un isomorphisme.
\end{prop}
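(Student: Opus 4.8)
The plan is to descend $\alpha$ to the base $\P^2_F$ using the Kahn--Rost--Sujatha surjectivity for the generic conic, and then to remove the ramification of the resulting class on $\P^2_F$ by a residue and reciprocity analysis in which the hypotheses on $\Gamma$ enter. Injectivity of $H^3(F,\Q/\Z(2))\to H^3_{\on{nr}}(F(X)/F,\Q/\Z(2))$ is immediate: since $g\geq 0$ on $\A^2(R)$, over a point of $\A^2(R)$ with $g>0$ the fibre $y^2+z^2=g(u,v)$ of $X\to\P^2_R$ has a (smooth) $R$-point, so $X(F)\neq\emptyset$ and $H^3(F,\Q/\Z(2))\to H^3(F(X),\Q/\Z(2))$ is split injective.

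For surjectivity, let $\alpha\in H^3_{\on{nr}}(F(X)/F,\Q/\Z(2))$; then $\alpha\in H^3_{\on{nr}}(F(X)/F(\P^2),\Q/\Z(2))$ as well. The generic fibre of $X\to\P^2_F$ is a smooth conic, i.e. the projective quadric attached to a nondegenerate rank-$3$ quadratic form over $F(\P^2)$, so Theorem~\ref{KRS}(ii) (the rank is $\neq 6$) yields $\beta\in H^3(F(\P^2),\Q/\Z(2))$ with $\beta|_{F(X)}=\alpha$, well-defined modulo $N\coloneqq\Ker[H^3(F(\P^2),\Q/\Z(2))\to H^3(F(X),\Q/\Z(2))]$. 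Since the conic $y^2+z^2=gw^2$ acquires a rational point over $F(\P^2)(\sqrt{-1})$, a restriction--corestriction argument shows that $N$ is $2$-torsion; moreover $N$ contains every class $(-1,g)\cup(h)$ with $h\in F(\P^2)^*$ (these vanish over $F(X)$), and in fact equals this subgroup by Arason's theorem \cite{arason1975cohomologische}, the conic being the quadric of a Pfister neighbour of $\langle\!\langle -1,g\rangle\!\rangle$.

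Now analyse the residues of $\beta$ on $\P^2_F$. For a prime divisor $D$, the residue $\partial_D\beta\in H^2(F(D),\Q/\Z(1))$ must die in the function field of the fibre of $X\to\P^2_F$ over the generic point of $D$ (or of a multiplicity-one component), because $\alpha$ is unramified over $F$. If $D\neq\Gamma$, that fibre is the smooth conic $y^2+z^2=(g|_D)w^2$, whence $\partial_D\beta\in\{0,(-1,g|_D)\}$. If $D=\Gamma$, then $g$ is a uniformizer of $\mc{O}_{\P^2_F,\Gamma}$ (as $\Gamma$ is reduced) and the special fibre of the regular model $y^2+z^2=gw^2$ over $\mc{O}_{\P^2_F,\Gamma}$ is the reduced conic $\{y^2+z^2=0\}$ over $F(\Gamma)$, with function field $F(\Gamma)(\sqrt{-1})$; hence $\partial_\Gamma\beta\in\Ker[H^2(F(\Gamma))\to H^2(F(\Gamma)(\sqrt{-1}))]=(-1)\cup H^1(F(\Gamma),\Z/2)$. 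In particular every residue of $\beta$ is $2$-torsion: the prime-to-$2$ part of $\beta$ is then unramified on $\P^2_F$, hence constant since $\P^2_F$ is $F$-rational, and subtracting a constant and using that a class in $H^3(F(\P^2),\Q_2/\Z_2(2))$ killed by $2$ with constant double comes, up to a constant, from $\Z/2$-coefficients (apply the retraction attached to an $F$-point of $\P^2$), we may assume $\beta\in H^3(F(\P^2),\Z/2)$, the change to $\alpha$ lying in the image of $H^3(F,\Q/\Z(2))$. (The prime-to-$2$ part may instead be dispatched using the $F(\sqrt{-1})$-rationality of $X_{F(\sqrt{-1})}$ and a transfer.)

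It remains to choose the representative $\beta\in H^3(F(\P^2),\Z/2)$ modulo $N$ unramified on all of $\P^2_F$; then $\beta\in H^3_{\on{nr}}(F(\P^2)/F,\Z/2)=H^3(F,\Z/2)$ and $\alpha$ lies in the image of $H^3(F,\Q/\Z(2))$, as wanted. The ramification divisor of $\beta$ is a finite union of $\Gamma$ and curves $D_i$ with $\partial_{D_i}\beta=(-1,g|_{D_i})\neq 0$, and $\partial_\Gamma\beta=(-1,f)$ for some $f\in F(\Gamma)^*$; one removes the $D_i$-residues by subtracting a suitable $(-1,g,h_0)\in N$ and the $\Gamma$-residue by subtracting $(-1,g,h)\in N$ with $h$ restricting to $f$ on $\Gamma$ modulo squares. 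Making this work is the step I expect to be the main obstacle: the family $\{\partial_D\beta\}_D$ satisfies the reciprocity relations on $\P^2_F$ (\cite[Proposition 2.2]{rost1996chow}), and one must combine these with the geometry of $\Gamma$ --- geometric genus $0$, so that its normalization $\widetilde\Gamma$ is a conic, whose Picard group has no $2$-torsion and whose Brauer group equals $\Br(F)$, together with the transversality of $\Gamma$ and $L$, which controls the behaviour at infinity --- to see that $f$ is indeed the restriction of a rational function on $\P^2_F$ and that the successive modifications create no new ramification, so that the procedure terminates. Equivalently, one may hope to read the statement off a general description of $H^3_{\on{nr}}$ of a conic bundle over a surface in terms of its discriminant curve, the genus-$0$ contribution being trivial.
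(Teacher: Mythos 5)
Your overall strategy --- descend $\alpha$ to $H^3(F(\P^2))$ via Kahn--Rost--Sujatha, analyse residues, and correct by elements of $N$ of the form $(-1,g)\cup(h)$ --- is the same as the paper's, and your preliminary reductions (injectivity from $X(F)\neq\emptyset$, residues away from $\Gamma$ lying in $\{0,(-1,g|_D)\}$, residue along $\Gamma$ lying in $(-1)\cup H^1(F(\Gamma),\Z/2)$, disposal of the prime-to-$2$ part via $F'\coloneqq F(\sqrt{-1})$) are all sound and match the paper. But the proof is not complete: the decisive step, which you yourself flag as ``the main obstacle,'' is exactly the point where all the hypotheses on $\Gamma$ (ordinary nodes, geometric genus $0$, transversality with $L$, and --- crucially --- the positivity $g\geq 0$, which forces the singular points of $\Gamma$ and the points of $\Gamma\cap L$ to be non-real) must be used, and you only gesture at how they would enter. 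As written, you have not shown that $\partial_\Gamma\beta$ is the restriction of a class $(-1,h)$ with $h\in F(\P^2)^*$, nor that the correction terminates without creating new ramification, so the argument stops one step short of a proof.

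The paper closes this gap by a mechanism you do not use. One first normalises $\alpha$ at an $F$-point of $X$ so that, $X_{F'}$ being $F'$-rational, the image of $\beta$ in $H^3(F'(\P^2),\Q_2/\Z_2(2))$ is constant (indeed comes from $H^3(F')$ after the normalisation). One then compares the Bloch--Ogus complexes of $S_F$ and $S_{F'}$, where $S\to\P^2_R$ is the blow-up of the singular points of $\Gamma$: at every closed point $m$ of the strict transform $\Delta$ of $\Gamma$ whose image is either a singular point of $\Gamma$ or a point of $\Gamma\cap L$, the residue field $F(m)$ contains $F'$ (these points are defined over $C=R(\sqrt{-1})$), so the vanishing over $F'$ kills the secondary residue $\partial_m(\partial_\Delta\beta)$; at all other points the Bloch--Ogus complex on $\P^2_F$ does the job. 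Hence $\partial_\eta\beta$ is unramified on the smooth genus-$0$ curve $\Delta_F$, so it comes from $H^2(F,\Q/\Z(1))$, and its vanishing over $F'(\Delta)$ (a conic with an $F'$-point) forces $\partial_\eta\beta=(-1,\rho)$ with $\rho\in F^*$. The correction term is then simply $(-1,\rho,g)$, whose only possible new residue is along $L$ (the degree of $g$ being even), and one concludes by Rost's reciprocity that a class on $\P^2_F$ unramified outside a single line is constant. If you want to salvage your version, this is the argument you need to supply in place of the paragraph beginning ``Making this work is the step I expect to be the main obstacle.''
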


\begin{proof}
Soit $C\coloneqq R(\sqrt{-1})$. La vari\'et\'e $X_{C}$ est rationnelle. Si $-1$ est un carr\'e dans $F$, alors $X_F$ est rationnelle et donc pour tout $i$ l'application $H^i(F,\Q/\Z(2)) \to H^i_{\on{nr}}(F(X)/F,\Q/\Z(2))$ est un isomorphisme. Supposons donc que $\sqrt{-1}$ n'est pas un carr\'e dans $F$. Donc $F'\coloneqq F(\sqrt{-1})$ est une extension de corps de $F$ avec groupe de Galois $G=\on{Gal}(F'/F)\cong \Z/2$.

 Soient $G_F$ le groupe de Galois absolu de $F$ et $M$ un $G_F$-module galoisien discret de torsion premi\`ere \`a $2$. Pour tout entier $i\geq 0$ on a le diagramme commutatif
\[
\begin{tikzcd}
    H^i(F,M) \arrow[r]\arrow[d] & H^i(F',M)^G\arrow[d] \\ 
    H^i_{\on{nr}}(F(X)/F,M) \arrow[r] & H^i_{\on{nr}}(F'(X)/F',M)^G.
\end{tikzcd}
\]
Les fl\`eches horizontales sont des isomorphismes car $2$ est inversible dans $M$ et la fl\`eche verticale de droite est un isomorphisme car $X_{F'}$ est rationnelle sur $F'$. Il s'ensuit que l'application $H^i(F,M)\to H^i_{\on{nr}}(F(X)/F,M)$ est un isomorphisme.
Il nous reste donc a montrer que l'application \[H^3(F,\Q_2/\Z_2(2))\to H^3_{\on{nr}}(F(X)/F,\Q_2/\Z_2(2))\]
est un isomorphisme.

Soit $\alpha \in H^3_{\on{nr}}(F(X)/F,\Q_2/\Z_2(2))$. D'apr\`es Kahn-Rost-Sujatha \cite[Theorem 5]{kahn1998unramified}, il existe $\beta \in H^3(F(\P^2), \Q_2/\Z_2(2))$ d'image $\alpha \in H^3(F(X),\Q_2/\Z_2(2))$. Comme $X(R)$ est non vide, $X(F)$ est non vide. Soit $P\in X(F)$. En modifiant $\alpha$ par un \'el\'ement convenable de $H^3(F,\Q_2/\Z_2(2))$, on peut supposer que la sp\'ecialisation de $\alpha$ en $P$ est nulle. Comme $X_{F'}$ est rationnelle sur $F'$, l'image de $\alpha$ dans $H^3_{\on{nr}}(F'(X),\Q_2/\Z_2(2))$ est alors nulle. L'extension $F'(X)/F'(\P^2)$ \'etant transcendante pure, ceci entraîne que l'image de $\beta$ dans $H^3(F'(\P^2), \Q_2/\Z_2(2))$ est aussi nulle.

Soit $\eta$ le point g\'en\'erique de $\Gamma_F$. On consid\`ere le r\'esidu de $\beta$ en un point $\xi$ 
de codimension $1$ de $\A^2_{F}$ diff\'erent de $\eta$. Ce r\'esidu est dans le sous-groupe de $\Br(F(\xi))$ engendr\'e par $(-1,g)$ o\`u on note $g$ la fonction rationnelle
induite par $g$ en $\xi$. Consid\'erons la classe 
\[(-1,P(u,v),g(u,v)) \in H^3(F(\P^2), \Z/2) \subset H^3(F(\P^2), \Q_2/\Z_2(2))\]
o\`u $P(u,v)$ est un polyn\^{o}me s\'eparable d\'efinissant
les $\xi$ autres que $\eta$ pour lesquels le r\'esidu de $\partial_{\xi}(\beta)$
est non nul. On remplace $\beta$ par $\beta - (-1,P(u,v),g(u,v))$.
Cet \'el\'ement a  encore pour image $\alpha$ dans $H^3(F(X),\Q/\Z(2))$.
On est ramen\'e \`a supposer que $\beta$ a tous ses r\'esidus
sur $\P^2_{F}$ nuls sauf peut-\^{e}tre $\partial_{\eta}(\beta)$
et $\partial_{L}(\beta)$.

Comme $X_{F'}$ est $F'$-rationnelle, l'image de $\alpha$ dans $H^3_{\on{nr}}(F'(X),\Q_2/\Z_2(2))$ provient de $H^3(F',\Q_2/\Z_2(2))$. Le morphisme $X_{F'}\to \P^2_{F'}$ admet une section rationnelle. Par sp\'ecialisation, ceci montre que l'image de $\beta$ dans $H^3(F'(\P^2),\Q_2/\Z_2(2))$ provient de $H^3(F',\Q_2/\Z_2(2))$. 

Soit  $p \colon S \to \P^2_{R}$ l'\'eclatement des points singuliers de $\Gamma$. On consid\`ere les complexes de Bloch-Ogus pour $S_F$ et $S_{F'}$:
\[
\begin{tikzcd}
    H^3(F(S),\Q_2/\Z_2(2))\arrow[r] \arrow[d]  &  H^3(F'(S),\Q_2/\Z_2(2))  \arrow[d] \\
    {\smash{\bigoplus_{\substack{\gamma\in (S_F)^{(1)}}}} H^2(F(\gamma),\Q_2/\Z_2(1))} \arrow[r]  \arrow[d] & 
    {\smash{\bigoplus_{\substack{\gamma'\in (S_{F'})^{(1)}}}} H^2(F'(\gamma'),\Q_2/\Z_2(1))} \arrow[d]  \\
    {\smash{\bigoplus_{\substack{m\in (S_F)^{(2)}}}} H^1(F(m),\Q_2/\Z_2)}     \arrow[r] & 
    {\smash{\bigoplus_{\substack{m'\in (S_{F'})^{(2)}}}} H^1(F'(m'),\Q_2/\Z_2)}.
\end{tikzcd}
\]
Pour tout $m\in (S_F)^{(2)}$ de corps r\'esiduel $F(m)$ contenant $F'$, on a un isomorphisme $F(m)\otimes_F F'\cong F(m_1)\times F(m_2)$, o\`u $m_1,m_2\in (S_{F'})^{(2)}$ sont les deux $F'$-points d'image $m$. Pour $i=1,2$, l'inclusion naturelle $F(m)\subset F(m_i)$ est une \'egalit\'e et donc l'application de restriction $H^1(F(m),\Q_2/\Z_2)\to H^1(F(m_i),\Q_2/\Z_2)$ est un isomorphisme pour $i=1,2$. Comme l'image de $\beta$ dans $H^3(F'(S),\Q_2/\Z_2(2))$ provient de $H^3(F',\Q_2/\Z_2(2))$, on conclut que $\partial_m(\partial_\gamma(\beta))=0$ pour tout $\gamma\in (S_F)^{(1)}$ et pour tout $m\in (S_F)^{(2)}$ dans la clôture de $\gamma$ et corps r\'esiduel $F(m)$ contenant $F'$.

Soit $\Delta \subset S$ la d\'esingu\-laris\'ee de $\Gamma$, soit $m\in \Delta_F$ un point ferm\'e et soit $\cl{m}\in \Gamma$ l'image de $m$. Si $\cl{m}$ n'appartient pas \`a $L_F$ et $\Gamma_F$ est lisse en $\cl{m}$, alors il r\'esulte du complexe de Bloch-Ogus sur $\P^2_F$ que $\partial_m(\partial_\Delta(\beta))=0$. Sinon, comme les points de $\Gamma_F\cap L_F$ et les points singuliers de $\Gamma_F$ sont d\'efinis sur $C$, le corps r\'esiduel $F(m)$ contient $F'$ et donc $\partial_m(\partial_\Delta(\beta))=0$ d'apr\`es le paragraphe pr\'ec\'edent. On conclut que $\partial_{\eta}(\beta) \in H^2(F(\Delta),\Q/\Z(1))$ est non ramifi\'e et donc provient de $H^2(F,\Q/\Z(1))$. Comme l'image de $\beta$ dans $H^3(F'(\Delta),\Q_2/\Z_2(2))$ est nulle, l'image de $\partial_{\eta}(\beta)$ dans $H^2(F'(\Delta),\Q/\Z(1))$ l'est aussi. Comme $\Delta_{F}$ est une conique lisse qui a un point rationnel sur $F'$, on conclut que  $\partial_{\eta}(\beta)$ peut s'\'ecrire $(-1,\rho)$ avec $\rho \in F^*$.

L'\'el\'ement $\beta - (-1,\rho, g) \in H^3(F(\P^2),\Q_2/\Z_2(2))$ a encore pour image $\alpha$
dans le groupe $H^3(F(X),\Q_2/\Z_2(2))$ et a tous ses r\'esidus nuls sur $\P^2_{F}$
sauf peut-\^etre au point g\'en\'erique de $L$. D'apr\`es \cite[Proposition 8.6, Remark 1.11, Remark 2.5]{rost1996chow}, ceci suffit \`a assurer
que $\beta - (-1,\rho, g)$  appartient  \`a $H^3(F,\Z/2)$, et donc
$\alpha$ est dans l'image de $H^3(F,\Q_2/\Z_2(2))$.
\end{proof}

\begin{rmk}
Soit $d$ le degr\'e (pair) de la courbe $\Gamma$. Si $d=2$, la $R$-vari\'et\'e $X$ est une quadrique avec un $R$-point. C'est donc une vari\'et\'e rationnelle. Dans \cite[Theorem 2.2]{benoist2024rationality}, il est montr\'e que la $R$-vari\'et\'e $X$
est rationnelle si $d=4$, et dans \cite[Theorem 4.5]{benoist2024rationality} qu'elle n'est pas rationnelle si $d\geq 12$.
\end{rmk}

\begin{rmk} En degr\'e cohomologique $2$, on a un r\'esultat g\'en\'eral.
    Soient $R$ un corps r\'eel clos, $g(u,v) \in R[u,v]$ un polyn\^{o}me
    g\'eom\'etriquement irr\'eductible et $X$ la $R$-vari\'et\'e affine d'\'equa\-tion $y^2+z^2=f(u,v)$.  Pour toute extension $F/R $ l'application $\Br(F) \to \Br_{\on{nr}}(F(X)/F)$ est surjective. La m\'ethode pour \'etablir ce r\'esultat est analogue \`a celle de la proposition \ref{BeWi} mais plus simple. On utilise le comportement du groupe de Brauer non ramifi\'e  dans une fibration \cite[Chapter 11]{colliot2021brauer}.
    \end{rmk}

\section*{Remerciements}
Les auteurs remercient Olivier Wittenberg pour avoir sugg\'er\'e la deuxi\`eme preuve
du th\'eor\`eme \ref{critere-connexe}. Le deuxi\`eme auteur  a b\'en\'efici\'e du soutien du projet NSF DMS-2201195.

\end{document}